\definecolor{bleu_sombre}{rgb}{0,0,0.6}  \definecolor{rouge_sombre}{rgb}{0.8,0,0}\definecolor{vert_sombre}{rgb}{0,0.6,0}
\theoremstyle{plain}
\newtheorem{theorem}{{Theorem}}[section] 
\newtheorem*{theorem*}{{Theorem}}
\newtheorem{proposition}[theorem]{Proposition}
\newtheorem*{proposition*}{Proposition}
\newtheorem{corollary}[theorem]{Corollary}
\newtheorem*{corollary*}{Corollary}
\newtheorem{lemma}[theorem]{Lemma}
\newtheorem*{lemma*}{Lemma}
\theoremstyle{definition}
\newtheorem*{definition*}{Definition}
\theoremstyle{remark}
\newcommand{\inner}[2]{\left\langle{#1}, {#2}\right\rangle} 
\newcommand{\seq}[1]{\left\{#1_n\right\}_{n = 1}^{\infty}}
\newcommand{\dd}{\,\textrm{d}}
\newcommand{\norm}[1]{\left|\left|#1\right|\right|}
\newcommand\ran{\mathop\mathrm{ran}\nolimits}
\newcommand{\abs}[1]{\left\vert #1\right\vert}
\renewcommand{\leq}{\leqslant}	\renewcommand{\geq}{\geqslant}
\renewcommand{\Re}{\mathsf{Re}\,}        
\renewcommand{\Im}{\mathsf{Im}\,}
\newcommand{\Tr}{\mathsf{Tr}\,}
\newcommand{\Dom}{\mathsf{dom}\,}
\renewcommand{\ker}{\mathsf{ker}} 
\newcommand{\Ran}{\mathsf{ran}\,}
\renewcommand{\deg}{\mathsf{deg}\,}
\newcommand{\detail}[1]
{
}
\theoremstyle:=definition,remark,plain\do{%
        \expandafter\g@addto@macro\csname th@\theoremstyle\endcsname{%
            \addtolength\thm@preskip\parskip
            }%
        }
\definecolor{DarkBlue}{rgb}{0,0.1,0.7}  
\definecolor{DarkGreen}{rgb}{0,0.5,0.1}
\newcommand\soutD{\bgroup\markoverwith
{\textcolor{DarkGreen}{\rule[.5ex]{2pt}{1pt}}}\ULon}
\newcommand\soutJ{\bgroup\markoverwith
{\textcolor{DarkBlue}{\rule[.5ex]{2pt}{1pt}}}\ULon}
\newcommand{\Hm}[1]{\leavevmode{\marginpar{\tiny%
$\hbox to 0mm{\hspace*{-0.5mm}$\leftarrow$\hss}%
\vcenter{\vrule depth 0.1mm height 0.1mm width \the\marginparwidth}%
\hbox to
0mm{\hss$\rightarrow$\hspace*{-0.5mm}}$\\\relax\raggedright #1}}}
\title[The wave equation with Dirac damping on a compact star graph]{Spectrum of the wave equation with Dirac damping on a compact star graph}
\author{Mikul\'{a}\v{s} Ku\v{c}era}
\address[Mikul\'{a}\v{s} Ku\v{c}era]{Department of Mathematics, Faculty of Nuclear Sciences and Physical Engineering, Czech Technical University in Prague, Trojanova 13, 120 00, Prague, Czech Republic}
\email{kucermik@fjfi.cvut.cz}
\date{\today}
\begin{document}

\begin{abstract}
We consider the wave equation with a distributional Dirac damping and Dirichlet boundary conditions on a compact interval. It is shown that the spectrum of the corresponding wave operator is fully determined by the zeros of an entire function. Consequently, a considerable change of spectral properties is shown for certain critical values of the damping parameter. We also derive a definitive criterion for the Riesz basis property of the root vectors for an arbitrary placement of a complex-valued Dirac damping. For irrational damping positions, we identify an additional measure-zero set of exceptional damping parameters for which the root vectors are complete but fail to form a Riesz basis. Finally, we consider a generalization of the problem to compact star graphs and provide insight into the essence of the critical damping constant.
\end{abstract}

\maketitle
 
\section{Introduction}
%
To model playing harmonics on a string, in \cite{BRT}, Bamberger, Rauch, and Taylor introduced the following wave equation:
\begin{equation}
u_{tt}(x,t) - u_{xx}(x,t) + \alpha \delta(x-a)u_t(x,t) = 0, \quad x \in [0, \pi], \quad t \geq 0,
    \label{WaveEquation}
\end{equation}
where $u:\left[0, \pi\right] \times [0, +\infty) \to \mathbb C$ is the displacement of the string, $a \in  (0, \pi)$, subject to the Dirichlet boundary conditions
\[
u(0, t) = 0 = u(\pi, t), \quad t \geq 0.
\]
By detailed analysis of the contraction semigroup of the corresponding wave operator acting on the Hilbert space $\mathcal H = \dot H_0^1(0, \pi) \times L^2(0, \pi)$, the authors show that in the case of central placement $a = \pi/2$, the optimal damping constant (i.e. such that ensures the fastest decay of non-harmonic modes) is $\alpha = 2$.

Further research was carried out in \cite{CH} by Cox and Henrot. Using the `shooting function' method, they characterised the eigenvalues as roots of an entire function. It was also established that in the special case $a = p\pi/q$, with $p$ and $q$ being coprime integers, and $\alpha \in [0, +\infty) \setminus \left\{2\right\}$, the root vectors comprise a Riesz basis for $\mathcal H$. However, the analysis remained incomplete for $\alpha = 2$ and arbitrary placement of the damping as well as a general complex damping $\alpha \in \mathbb C$ which has been recently considered in \cite{KrLip} in the present model and in \cite{KrRoy} for the wave equation on the real line.

 This paper aims to solidify the `shooting function' method in the complex setting. Reasoning with the poles of the resolvent in the spirit of \cite{CoxZuazua}, it is shown that algebraic multiplicities of eigenvalues are exactly their multiplicities as roots of the function in question. For rational placement of the damping, this characterisation consequently allows us to determine whether or not the root vectors form a Riesz basis in $\mathcal H$. The importance of this result lies in the fact that it enables the simple spectral solution of \cite{CoxZuazua} to the optimal damping problem proposed in \cite{BRT} and mentioned above.

 Similarly to the non-compact star graph model of Krej\v{c}i\v{r}\'\i k and Royer \cite{KrRoy}, using a simple symmetry observation, we extend the results to admit arbitrary complex Dirac dampings. The generalization to an unrestricted placement is then achieved using a result of Krej\v{c}i\v{r}\'\i k and Lipovsk\'y \cite{KrLip}. The authors calculated the spectral determinant of the wave operator and, as expected, confirmed its singular behaviour at $\alpha = \pm 2$. In the case of an irrational damping placement, we identify an additional set of damping values that lead to the root vectors being complete but not comprising a Riesz basis.

Finally, some insight is provided into the appearance of the values $\alpha = \pm 2$ as critical points of the model. Following the footsteps of \cite{KrRoy}, we analyse the wave equation on a compact star graph in the sense of \cite{Kurasov} with $n \in \mathbb N$ edges. It is shown that the abrupt change of spectral properties happens precisely for $\alpha = \pm n$.

This behaviour of the wave equation subject to a non-regular damping is not unprecedented. We refer to the 2020 article \cite{Siegl}. The authors consider the singular damping of the form
\[
u_{tt}(x,t) - u_{xx}(x,t) + \frac{\alpha}{x}u_t(x,t) = 0, \quad x \in (0, 1), \quad t \geq 0,
\]
with Dirichlet boundary conditions and $\alpha > 0$. It was shown that the otherwise infinite spectrum suddenly shrinks to $n-1$ eigenvalues whenever $\alpha = 2n$ for $n \in \mathbb N$. In particular, for $\alpha = 2$, the spectrum becomes empty. Moreover, in this concrete setting with $\alpha = 2$, all solutions of the wave equation are shown to vanish at finite time.

The present model has another, perhaps less apparent, possible application reaching into relativistic quantum mechanics. The traditional spectral approach to the wave equation \eqref{WaveEquation} lies in considering $\psi = (u, u_t)^T $ and rewriting it as
\[
A(a, \alpha)\psi = \partial_t\psi, \quad A(a, \alpha) = \begin{pmatrix}
    0 & I \\ \partial_{xx} & -\alpha\delta_a
\end{pmatrix},
\]
where $A(a,\alpha)$ is the generator of the corresponding semigroup. However, one can take $\phi = (u_t, u_x)^T$ and rewrite \eqref{WaveEquation} as 
\[
\mathrm i D(a, \alpha)\phi = \partial_t\phi, \quad D(a, \alpha) = \begin{pmatrix}
    \mathrm {i}\alpha\delta_a & -\mathrm{i}\partial_x \\
    -\mathrm{i}\partial_x & 0
\end{pmatrix}.
\]
The one-dimensional Dirac-type operator with a single-component highly localized potential $D(a,\alpha)$ is self-adjoint whenever $\alpha \in \mathrm{i}\mathbb R$. As discovered in \cite{Gesztesy1, Gesztesy2}, unitary equivalence exists between $A(a, \alpha)$ acting in $\dot H_0^1(0, \pi) \times L^2(0, \pi)$ and $\mathrm{i}D(a, \alpha)$ considered in $L^2(0,\pi) \times L^2(0, \pi)$. This observation  further emphasizes the importance of accounting for a complex damping parameter.

To put this reformulation into a broader context, we refer to the recent work \cite{HeribanTušek} for a detailed investigation of one-dimensional Dirac operators with non-self-adjoint point interactions. Related self-adjoint $\delta$-shell interactions supported on straight lines have likewise been studied and proposed as continuum models for line defects in Dirac materials (see \cite{BHT}).

The paper is organised as follows. Section~\ref{Sec.results} introduces the model and its basic known properties. Our main results concerning the spectrum and basis of root vectors are also formulated here. Section~\ref{Sec.general} provides proofs and outlines of calculations of the general properties of the wave operator needed for our analysis. The results concerning Riesz basis of root vectors are proven in Section~\ref{sec.Basis}. In Section~\ref{CompactStarGraph}, we explain the appearance of the mysterious damping constant $\pm 2$ by considering the wave equation with Dirac damping on a compact star graph.

\section{The model and main results}\label{Sec.results}
%

\subsection{The damped wave equation and the wave operator}
Our setting is the Hilbert space $\mathcal H = \dot H_0^1(0, \pi) \times L^2(0, \pi)$ endowed with the inner product
\[
\inner{\phi}{\psi} = \inner{\phi_1'}{\psi_1'}_{L^2} + \inner{\phi_2}{\psi_2}_{L^2}.
\]

Setting $\psi = (u, u_t)$, the wave equation \eqref{WaveEquation} can be reformulated as
\begin{gather}
A(a, \alpha)\psi = \psi_t, \quad A(a,\alpha) = \begin{pmatrix} 0 & I \nonumber \\ \partial_{xx} & 0 \end{pmatrix}, \quad \psi(x, 0) = \psi_0(x), \\ \Dom A(a,\alpha) =\nonumber \\ \left\{\psi \in \left(\dot H_0^1(0, \pi) \cap H^2(0, a) \cap H^2(a, \pi)\right) \times \dot H_0^1(0, \pi) \mid \psi_1'(a+) - \psi_1'(a-) = \alpha\psi_2(a) \right\}
\label{DefinitionOfA}
\end{gather}
with Cauchy data $\psi(x, 0) = \psi_0(x)$, where $\psi \in \mathcal H$. The jump condition in the domain is to be understood in the sense of the absolutely continuous representative in the given equivalence class of $\dot H_0^1(0, \pi)$.

It is shown in \cite{BRT} that the operator $A(a, \alpha)$ defined above is maximally dissipative for $\alpha \geq 0$. It then follows from the Lumer-Phillips theorem \cite[Theorem 4.3]{Pazy} that $A$ gives rise to the contraction semigroup $\exp(tA)$ which is a suitable solution to \eqref{WaveEquation}.
By \cite{BRT}, the operator has a compact resolvent for all values $a \in (0, \pi)$ and $\alpha \in \mathbb C$; therefore, it also has purely discrete spectrum. We will replicate and improve the result by showing that the inverse is even Hilbert-Schmidt and computing its Hilbert-Schmidt norm in Section~\ref{Sec.general}.

It was further discovered by Bamberger, Rauch, and Taylor that the harmonic spectrum (i.e. purely imaginary eigenvalues) is non-empty if and only if $a$ is a rational multiple of $\pi$. They also found out that all eigenvalues are geometrically simple. When it comes to determining algebraic multiplicity, the following result is pivotal.

\begin{theorem}\label{CharacteristicFunction}
    Let $\alpha \in \mathbb C$ and $a \in (0,\pi)$. Then $\lambda \in \mathbb C$ is an eigenvalue of $A(a, \alpha)$ if and only if it is a root of the entire function
    \begin{equation}
    S(\lambda; a, \alpha) \coloneqq \frac{1}{\lambda}\left(\sinh(\lambda\pi) + \alpha \sinh(\lambda a)\sinh(\lambda(\pi-a))\right).
        \label{DefinitionOfS}
    \end{equation}

    Additionally, the algebraic multiplicity of the eigenvalue $\lambda$ is exactly its multiplicity as a root of $S(\lambda; a, \alpha)$.
\end{theorem}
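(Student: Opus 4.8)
The plan is to treat the two assertions separately: the eigenvalue--root correspondence by an explicit separation of variables, and the multiplicity statement by analysing the poles of the resolvent. For the eigenvalue equation $A(a,\alpha)\psi=\lambda\psi$, writing $\psi=(\psi_1,\psi_2)$ forces $\psi_2=\lambda\psi_1$ and $\psi_1''=\lambda^2\psi_1$ on each of $(0,a)$ and $(a,\pi)$. The Dirichlet conditions at the endpoints make $\psi_1$ a multiple of $\sinh(\lambda x)$ on $(0,a)$ and of $\sinh(\lambda(\pi-x))$ on $(a,\pi)$; imposing continuity at $a$ together with the jump condition $\psi_1'(a+)-\psi_1'(a-)=\alpha\lambda\psi_1(a)$ yields a $2\times2$ homogeneous linear system for the two amplitudes. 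A nontrivial solution exists exactly when the determinant vanishes, and a short manipulation using $\sinh(\lambda a)\cosh(\lambda(\pi-a))+\cosh(\lambda a)\sinh(\lambda(\pi-a))=\sinh(\lambda\pi)$ reduces this to $\sinh(\lambda\pi)+\alpha\sinh(\lambda a)\sinh(\lambda(\pi-a))=0$, i.e. $\lambda\,S(\lambda;a,\alpha)=0$. The apparent singularity at $\lambda=0$ is removable, so $S$ is entire with $S(0)=\pi\neq0$, consistent with $0$ not being an eigenvalue; the few degenerate subcases (e.g. $\sinh(\lambda a)=0$) are checked directly.

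For the multiplicity I would compute the resolvent $(A-\lambda)^{-1}$ explicitly. Solving $(A-\lambda)\psi=f$ reduces to the scalar boundary value problem $\psi_1''-\lambda^2\psi_1=f_2+\lambda f_1$ with Dirichlet data and the inhomogeneous jump $\psi_1'(a+)-\psi_1'(a-)=\alpha(\lambda\psi_1(a)+f_1(a))$. Writing the solution as the free Dirichlet Green's function $G_0(\cdot,\cdot;\lambda)$ of $\partial_{xx}-\lambda^2$ applied to the data, plus a multiple of $G_0(\cdot,a;\lambda)$ accounting for the jump, and solving the resulting scalar self-consistency equation for $\psi_1(a)$, one finds that the correction to the undamped resolvent is rank one with denominator
\[
D(\lambda)=1-\alpha\lambda\,G_0(a,a;\lambda)=\frac{\lambda\,S(\lambda;a,\alpha)}{\sinh(\lambda\pi)}.
\]
Thus $(A-\lambda)^{-1}=(A(a,0)-\lambda)^{-1}+D(\lambda)^{-1}K(\lambda)$, where $K(\lambda)$ is an explicit rank-one operator whose range is spanned by the profile $G_0(\cdot,a;\lambda)$ --- which at a root $\lambda_0$ of $S$ is precisely the eigenfunction direction.

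Since all eigenvalues are geometrically simple, each eigenspace carries a single Jordan block, so the algebraic multiplicity coincides with the ascent of the eigenvalue, which equals the order of the pole of $\lambda\mapsto(A-\lambda)^{-1}$ at $\lambda_0$. It therefore suffices to show that this pole order equals the order $m$ of the zero of $S$ at $\lambda_0$. At a root with $\sinh(\lambda_0\pi)\neq0$ this is immediate: the undamped resolvent is holomorphic there, $D(\lambda)^{-1}=\sinh(\lambda\pi)/(\lambda S(\lambda))$ contributes a pole of order exactly $m$, and the leading Laurent coefficient is a nonzero rank-one operator because both the range profile $G_0(\cdot,a;\lambda_0)$ and the associated functional are nonzero.

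The genuine difficulty is the harmonic case $\lambda_0=\mathrm{i}k$, where $\sinh(\lambda_0\pi)=0$: then the undamped resolvent $(A(a,0)-\lambda)^{-1}$ itself has a pole and the $\sinh(\lambda\pi)$ factors inside $G_0$ and inside $D$ vanish simultaneously, so one must show these spurious singularities cancel and the net pole order is still exactly $m$. I expect this to be the main obstacle. One way is to bring the whole resolvent over the common denominator $\lambda\,S(\lambda)\sinh(\lambda\pi)$ and track orders of vanishing, the identity $D(\lambda)=\lambda S(\lambda)/\sinh(\lambda\pi)$ making the bookkeeping transparent, since the $\sinh(\lambda\pi)$ in $D$ precisely absorbs the simple pole coming from the algebraically simple undamped eigenvalue at $\mathrm{i}k$. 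A cleaner uniform treatment exploits that $(A-\lambda)^{-1}-(A(a,0)-\lambda)^{-1}$ is rank one, hence trace class, with
\[
\Tr\big((A-\lambda)^{-1}-(A(a,0)-\lambda)^{-1}\big)=\frac{d}{d\lambda}\log D(\lambda)
\]
up to an overall sign; integrating around $\lambda_0$ and applying the residue theorem identifies the algebraic multiplicity with the order of the zero of the associated perturbation determinant, which equals $S$ after the $\sinh(\lambda\pi)$ and $\lambda$ factors exactly account for the multiplicities of the undamped operator, thereby yielding $m$ in every case.
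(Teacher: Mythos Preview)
Your proposal is correct, but it takes a genuinely different route from the paper, and the comparison is instructive.

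The paper constructs the Green function of the \emph{damped} Sturm--Liouville problem directly: it takes shooting solutions $u_1,u_2$ of $u''-\lambda(\lambda+\alpha\delta_a)u=0$ with $u_1(0)=0$, $u_1'(0)=1$, $u_2(\pi)=0$, $u_2'(\pi)=-1$, so that the Green kernel is $-u_1(x_<)u_2(x_>)/u_1(\pi)$ and the resolvent \eqref{Resolvent} has $u_1(\pi)=S(\lambda;a,\alpha)$ as its sole denominator. The pole order of $(A-\lambda)^{-1}$ is then immediately the zero order of $S$, with no case distinction whatsoever: the harmonic eigenvalues are not special, because the $\sinh(\lambda\pi)$ never enters separately. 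The link between pole order and algebraic multiplicity is supplied by a short abstract lemma (Proposition~\ref{IndexPole}) via the compact inverse.

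Your approach instead writes $(A-\lambda)^{-1}$ as a rank-one Krein perturbation of the \emph{undamped} resolvent, which manufactures the factor $D(\lambda)=\lambda S(\lambda)/\sinh(\lambda\pi)$. This is perfectly valid, and the perturbation-determinant identity you invoke at the end is the right tool to make the cancellation at $\lambda_0=\mathrm{i}k$ rigorous in one stroke (indeed $\mathrm{mult}_A(\lambda_0)-\mathrm{mult}_{A_0}(\lambda_0)=\mathrm{ord}_{\lambda_0}D$ gives $\mathrm{ord}_{\lambda_0}S$ in both the harmonic and non-harmonic cases). The trade-off: your route imports a piece of abstract perturbation theory and an extra case analysis that the paper avoids entirely by choosing the shooting solutions for the perturbed problem from the outset. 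On the other hand, your Krein formula makes the rank-one structure of the damping transparent and would generalise more readily to several point dampings.
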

As a consequence, all eigenvalues are shown to be algebraically at most double by Corollary \ref{C: Algebraic multiplicity}.

Since the analysis for $\alpha > 0$, $\alpha \neq 2$ has to some extent been done by Cox and Henrot in \cite{CH}, we shall henceforth focus especially on the case $\alpha = 2$ as well as, perhaps most notably, arbitrary $\alpha \in \mathbb C$.

\subsection{The root vectors}
Recall that a sequence in a Hilbert space is called a Riesz basis if it is the image of an orthonormal basis under a bounded isomorphism. In pursuit of proving or disproving the Riesz basis property for any $\alpha \in \mathbb C$ and $a \in (0, \pi)$, we first note the following symmetrical relationship between $A$ and its adjoint:

\begin{proposition}\label{AdjointProposition}
    For any $a \in (0, \pi)$ and $\alpha \in \mathbb C$, the adjoint operator of $A(a, \alpha)$ is $A^\ast(a, \alpha) = -A(a, -\overline{\alpha})$.
\end{proposition}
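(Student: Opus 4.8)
The plan is to identify the formal adjoint by integration by parts, obtain one inclusion between the two operators, and then promote it to an equality using the discreteness of the spectrum. The first, and only computational, step is to fix $\phi \in \Dom A(a, -\overline{\alpha})$ and an arbitrary $\psi \in \Dom A(a, \alpha)$ and to evaluate $\inner{A(a,\alpha)\psi}{\phi}$ directly from the definition of the inner product on $\mathcal H$. Writing $A(a,\alpha)\psi = (\psi_2, \psi_1'')$, this reduces to $\inner{\psi_2'}{\phi_1'}_{L^2} + \inner{\psi_1''}{\phi_2}_{L^2}$. In each term I would integrate by parts, splitting every integral at $a$ to accommodate the jumps of $\psi_1'$ and of $\phi_1'$. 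The boundary contributions at $0$ and $\pi$ vanish because both $\psi_2$ and $\phi_2$ lie in $\dot H_0^1(0,\pi)$ and hence satisfy the Dirichlet condition, while $\psi_2$ and $\phi_2$ are continuous across $a$. After using the jump condition $\psi_1'(a+) - \psi_1'(a-) = \alpha\psi_2(a)$ of $\Dom A(a,\alpha)$, the only surviving terms are collected at $a$ as
\[
\psi_2(a)\bigl(\overline{\phi_1'(a-)} - \overline{\phi_1'(a+)}\bigr) - \alpha\,\psi_2(a)\,\overline{\phi_2(a)}.
\]

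The key observation is that this remaining expression equals $\psi_2(a)$ times the complex conjugate of $\phi_1'(a-) - \phi_1'(a+) - \overline{\alpha}\,\phi_2(a)$, which vanishes precisely because $\phi$ satisfies the adjoint jump condition $\phi_1'(a+) - \phi_1'(a-) = -\overline{\alpha}\,\phi_2(a)$ encoded in $\Dom A(a,-\overline{\alpha})$. What is left is $\inner{\psi_2}{-\phi_1''}_{L^2} + \inner{\psi_1'}{-\phi_2'}_{L^2}$, which is exactly $\inner{\psi}{-A(a,-\overline{\alpha})\phi}_{\mathcal H}$, since $-A(a,-\overline{\alpha})\phi = (-\phi_2, -\phi_1'')$ and the first slot of the inner product acts on the derivative. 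This establishes the inclusion $-A(a,-\overline{\alpha}) \subseteq A(a,\alpha)^\ast$.

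To upgrade this inclusion to equality I would invoke the fact, recalled from \cite{BRT}, that $A(a,\beta)$ has compact resolvent for every $\beta \in \mathbb C$; hence $A(a,\alpha)^\ast$ and $-A(a,-\overline{\alpha})$ both have purely discrete spectrum, so their resolvent sets have countable complements and therefore a common point $z$ can be chosen. For such $z$ the operators $A(a,\alpha)^\ast - z$ and $-A(a,-\overline{\alpha}) - z$ are both bijections onto $\mathcal H$, while the inclusion just proved gives $-A(a,-\overline{\alpha}) - z \subseteq A(a,\alpha)^\ast - z$. A bijective restriction of a bijection onto the same space must coincide with it: given $x$ in the domain of the larger operator, surjectivity of the smaller one produces a preimage of $(A(a,\alpha)^\ast - z)x$ which, by injectivity of the larger operator, equals $x$. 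The two operators therefore agree, yielding $A(a,\alpha)^\ast = -A(a,-\overline{\alpha})$.

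The main obstacle is the first step: one must set up the piecewise integration by parts with care, track the one-sided limits $\phi_1'(a\pm)$ together with the continuity of $\psi_2$ and $\phi_2$ at $a$, and check that the two transmission conditions conspire to cancel every boundary term, producing exactly the conjugated jump condition for $-\overline{\alpha}$. Once this bookkeeping is in place, the remainder is soft, resting only on the discreteness of the spectrum and the elementary observation that a bijective restriction of a bijection is the whole operator.
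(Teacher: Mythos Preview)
Your proof is correct and takes a genuinely different route from the paper. The paper establishes the equality directly: starting from an arbitrary $\phi \in \Dom A(a,\alpha)^\ast$, it first extracts the regularity of $\phi$ (that $\phi_1 \in H^2(0,a)\cap H^2(a,\pi)$ and $\phi_2 \in \dot H_0^1(0,\pi)$) by testing against carefully chosen $\psi$, and only then integrates by parts to read off the jump condition with parameter $-\overline{\alpha}$. In other words, the paper characterises $\Dom A^\ast$ from scratch. You instead verify only the easy inclusion $-A(a,-\overline{\alpha}) \subseteq A(a,\alpha)^\ast$ by the same integration by parts, which is legitimate because you start from $\phi$ already in the smooth domain, and then close the argument softly: both operators have compact resolvent (for $A^\ast$ this follows from $(A^\ast-\overline z)^{-1}=[(A-z)^{-1}]^\ast$), so a common resolvent point exists and the standard ``bijective extension of a bijection'' lemma forces equality. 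Your route is shorter and avoids the distributional bootstrapping, at the cost of importing the compact-resolvent fact from \cite{BRT}; the paper's route is more self-contained and yields the domain of $A^\ast$ without any external input.
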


Since both of the operators $A$ and $A^\ast$ possess compact resolvents (by combining Proposition \ref{AdjointProposition} and Theorem \ref{T: HS norm}), their systems of root vectors can be ordered and normalized to form biorthogonal sequences. In other words, if $\seq{\psi}$ is the system of root vectors of $A$ and $\seq{\phi}$ the system of root vectors of $A^\ast$, we can demand $\inner{\phi_m}{\psi_n} = \delta_{mn}$. This observation allows us to determine the Riesz basis property based on the following criterion.

\begin{theorem} \cite[Ch. 1, Theorem 9]{Young}. \label{RieszCriterion}
    Let $\mathcal H$ be a Hilbert space and $\seq{\psi} \subset \mathcal H$. Then $\seq{\psi}$ is a Riesz basis in $\mathcal H$ if and only if it is complete in $\mathcal H$ and Bessel and possesses a biorthogonal sequence $\seq{\phi}$ that is also complete and Bessel.
\end{theorem}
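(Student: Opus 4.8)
The plan is to prove Young's criterion directly from the working definition of a Riesz basis as the image of an orthonormal basis under a bounded isomorphism. For the forward implication, suppose $\seq{\psi}$ is a Riesz basis, so that $\psi_n = T e_n$ for some orthonormal basis $\seq{e}$ of $\mathcal H$ and a bounded invertible $T$. Totality is immediate, since $T$ is a surjective homeomorphism and therefore maps the dense set $\mathrm{span}\,\seq{e}$ onto a dense subset of $\mathcal H$. The Bessel property follows from the identity $\sum_n \abs{\inner{f}{\psi_n}}^2 = \sum_n \abs{\inner{T^\ast f}{e_n}}^2 = \norm{T^\ast f}^2 \le \norm{T}^2\norm{f}^2$. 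Finally, setting $\phi_n \coloneqq (T^{-1})^\ast e_n$ yields a biorthogonal system, because $\inner{\phi_m}{\psi_n} = \inner{e_m}{T^{-1}T e_n} = \delta_{mn}$; being the image of $\seq{e}$ under the bounded isomorphism $(T^{-1})^\ast$, the sequence $\seq{\phi}$ is itself a Riesz basis and hence total and Bessel by the same computation.

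The reverse implication carries the real content. Given biorthogonal sequences $\seq{\psi}$ and $\seq{\phi}$ that are total and Bessel with bounds $B_\psi$ and $B_\phi$, I would establish the two-sided Riesz bounds on finitely supported scalar sequences $(c_n)$. The upper bound is dual to the Bessel property of $\seq{\psi}$: by Cauchy--Schwarz, $\norm{\sum_n c_n\psi_n} = \sup_{\norm{f}=1}\abs{\sum_n c_n\inner{\psi_n}{f}} \le B_\psi^{1/2}\big(\sum_n\abs{c_n}^2\big)^{1/2}$. For the lower bound, biorthogonality gives $c_n = \inner{\sum_m c_m\psi_m}{\phi_n}$, so applying the Bessel property of $\seq{\phi}$ to $f = \sum_m c_m\psi_m$ produces $\sum_n\abs{c_n}^2 \le B_\phi\,\norm{\sum_m c_m\psi_m}^2$. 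Together these give $B_\phi^{-1}\sum_n\abs{c_n}^2 \le \norm{\sum_n c_n\psi_n}^2 \le B_\psi\sum_n\abs{c_n}^2$.

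It then remains to repackage these estimates operator-theoretically. The upper bound shows that $e_n \mapsto \psi_n$ extends to a bounded operator $T$ on $\mathcal H$; the lower bound shows $T$ is bounded below, hence injective with closed range; and totality of $\seq{\psi}$ forces that range to be all of $\mathcal H$. Thus $T$ is a bounded isomorphism carrying the orthonormal basis $\seq{e}$ onto $\seq{\psi}$, which is precisely the definition of a Riesz basis; the totality of $\seq{\phi}$ then enters symmetrically to identify it as the dual system $(T^{-1})^\ast e_n$. I expect the lower bound in the reverse direction to be the main obstacle, as it is the only step where the biorthogonal sequence genuinely intervenes and it relies on chaining biorthogonality together with the Bessel bound of $\seq{\phi}$ in exactly the right order; the extension and surjectivity arguments for $T$ are then routine.
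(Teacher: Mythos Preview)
The paper does not prove this statement; it is quoted verbatim from Young's monograph and used as a black box. There is therefore no ``paper's own proof'' to compare against.

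Your argument is correct and is essentially the standard proof one finds in Young. Two minor remarks. First, your closing sentence --- that the totality of $\seq{\phi}$ ``enters symmetrically to identify it as the dual system $(T^{-1})^\ast e_n$'' --- is superfluous: once you have shown that $T$ is a bounded isomorphism with $\psi_n = T e_n$, the biorthogonal sequence to $\seq{\psi}$ is unique (because $\seq{\psi}$ is total), so $\phi_n = (T^{-1})^\ast e_n$ is forced without any further hypothesis. In particular, the totality of $\seq{\phi}$ is not actually used in your reverse implication; it is a consequence rather than an input. Second, and relatedly, this means the hypotheses in the theorem as stated are slightly redundant for the ``if'' direction --- but that is a feature of the theorem's symmetric formulation, not a flaw in your proof.
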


Recall that by \cite{Young} a sequence $\seq{\psi}$ in a Hilbert space $\mathcal H$ is Bessel if and only if 
\[
\sum_{n=1}^\infty\left|\inner{\psi_n}{\psi}\right|^2 < + \infty, \quad \forall \psi \in \mathcal H.
\]
A subset of $\mathcal H$ is said to be complete if its linear span is dense in $\mathcal H$.

The verification of the condition in Theorem \ref{RieszCriterion} was done in \cite{CH} for positive $\alpha \neq 2$ and rational placement of the damping $a = p\pi/q$. In Section~\ref{sec.Basis} we show the derivation of the general condition:

\begin{theorem}\label{Criterion}
    Let $a \in (0, \pi)$ and $\alpha \in \mathbb C$. The root vectors of $A(a, \alpha)$ form a complete sequence in $\mathcal H$ if and only if $\alpha \neq \pm 2$.
    \begin{enumerate}
        \item If $a \in \pi \mathbb Q$, this sequence is a Riesz basis.
        \item If $a \notin \pi\mathbb Q$, then there is a measure-zero set $\mathcal E_a \subset \mathbb C\setminus \{\pm 2\}$ such that for any $\alpha \in \mathcal E_a$, the sequence is complete in $\mathcal H$, but it does not comprise a Riesz basis. For $\alpha \notin \mathcal E_a \cup \{\pm 2\}$, the sequence is a Riesz basis.
    \end{enumerate}
\end{theorem}
For the precise characterisation of the set $\mathcal E_a$, we refer to Theorem \ref{T: Irrational Riesz criterion}. For its illustration, see Figure \ref{fig: Exceptional set}.

\begin{figure}[ht]
    \centering
    \begin{subfigure}{0.44\textwidth}
        \includegraphics[width=\linewidth]{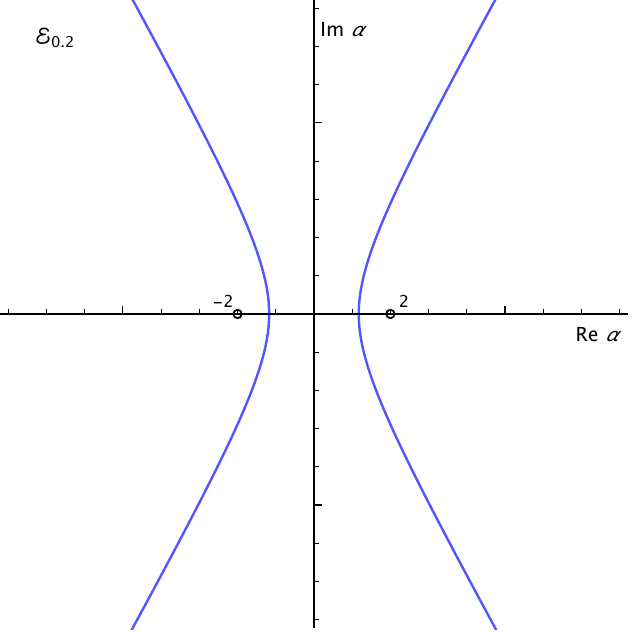}
        \caption{The set $\mathcal E_a$ for $a = 0.2$.}
    \end{subfigure}
    \hfill
    \begin{subfigure}{0.44\textwidth}
        \includegraphics[width=\linewidth]{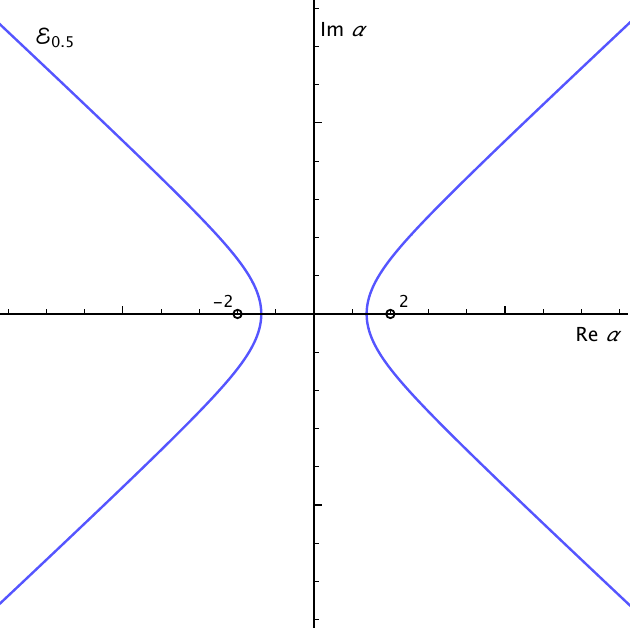}
        \caption{The set $\mathcal E_a$ for $a = 0.5$.}
    \end{subfigure}
    \medskip 
    
    \begin{subfigure}{0.44\textwidth}
        \includegraphics[width=\linewidth]{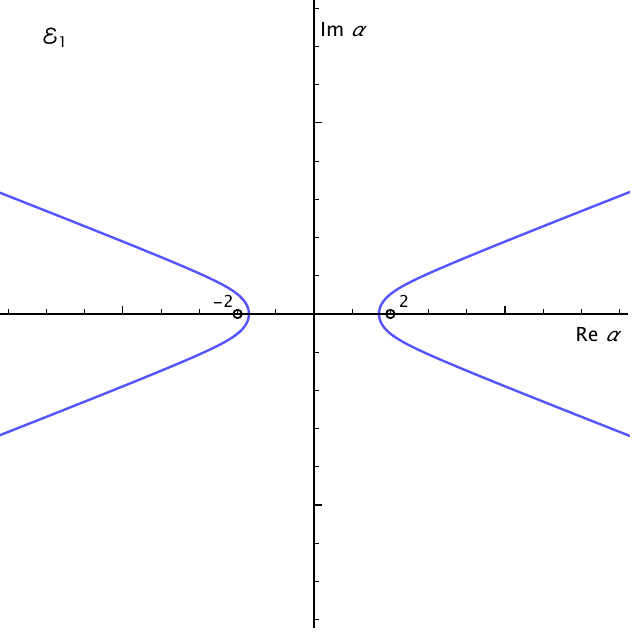}
        \caption{The set $\mathcal E_a$ for $a = 1$.}
    \end{subfigure}
    \hfill    
    \begin{subfigure}{0.44\textwidth}
        \includegraphics[width=\linewidth]{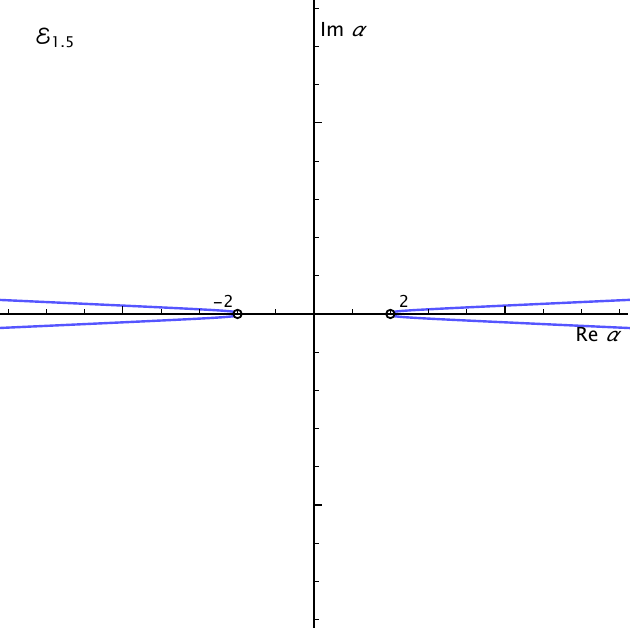}
        \caption{The set $\mathcal E_a$ for $a = 1.5$.}
    \end{subfigure}
     \caption{An illustration of the exceptional set $\mathcal E_a$ (blue curves) from Theorem \ref{Criterion} for four different values of $a$.}
    \label{fig: Exceptional set}
\end{figure}
\newpage
\subsection{The star graph setting}
The wave equation on an interval can be viewed as a special case of the wave equation on a compact star graph. Taking $n \in \mathbb N$ copies of the interval $[0, \pi]$ subject to Dirichlet boundary conditions at $\pi$ and the distributional damping $\alpha\delta$ placed at the common central vertex $0$, one can define the generator of the wave equation as a closed operator acting in a suitable Hilbert space. See Figure \ref{fig: Star Graph} for an illustration. An analysis analogous to that for the interval yields the following principal result: 
\[
\text{the root vectors form a Riesz basis}\quad \iff \quad\alpha \neq \pm n;
\]
that is, the critical values of the damping parameter are precisely $\pm n$, where $n$ is the number of edges adjacent to the damped vertex. In the case $\alpha = \pm n$, the root vectors cease to be complete. For a rigorous formulation and further details, we refer the reader to Section \ref{CompactStarGraph}.

\begin{figure}[ht]
    \centering
    \includegraphics[width=0.5\linewidth]{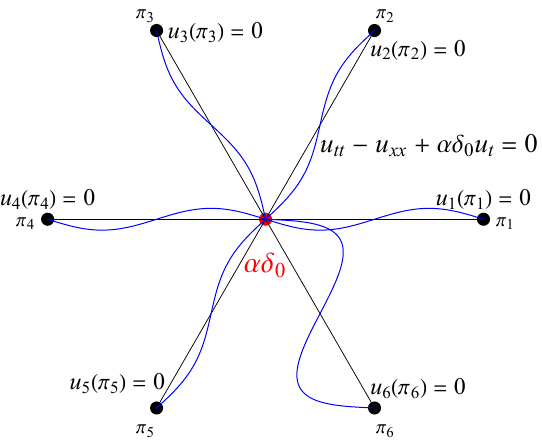}
    \caption{An illustration of the wave equation on a compact star graph. The outer vertices are subject to Dirichlet boundary conditions; the Dirac damping is placed at the central vertex.}
    \label{fig: Star Graph}
\end{figure}

\section{General properties of the wave operator}\label{Sec.general}
\subsection{The adjoint operator} For further use, it is convenient to know the formula for the adjoint of $A$. We strive to prove Proposition \ref{AdjointProposition}.
Let $\phi \in \Dom A^\ast$, $\eta \coloneqq A^\ast\phi$. Then we have
\begin{equation}
\inner{\phi_1'}{\psi_2'}_{L^2} + \inner{\phi_2}{\psi_1''}_{L^2} = \inner{\phi}{A\psi} = \inner{\eta}{\psi} = \inner{\eta_1'}{\psi_1'}_{L^2} + \inner{\eta_2}{\psi_2}_{L^2}
\label{AdjointDerivation}
\end{equation}
for any $\psi \in \Dom A$.

Setting $\psi_1 = 0$ and $\psi_2 \in C_0^\infty(0, a)$ or $C_0^\infty(a, \pi)$ in \eqref{AdjointDerivation} yields $\phi_1' \in \dot H^1(0, a) \cap \dot H^1(a, \pi)$ (the choice is consistent with the domain of $A$ using the fact that $\psi_2(a) = 0$). Next, choose $\psi_1 \in H^2(0, \pi)$ and $\psi_2 = 0$ (the choice is again consistent with $\Dom A$ since the derivative of $\psi_1$ is continuous on $(0, \pi)$ whenever $\psi_2(a) = 0$). Hence, we have
\[
\inner{\phi_2}{\psi_1''}_{L^2} = -\inner{\eta_1}{\psi_1''}_{L^2}.
\]
Making use of the surjectivity of the Dirichlet Laplacian on $(0, \pi)$ and the fact that the choices of $\psi_1$ cover precisely its domain, we conclude that $\phi_2 = - \eta_1$ in the distributional sense and thus also almost everywhere. It follows that $\phi_2 \in \dot H_0^1(0, \pi)$.

For any $\phi \in \Dom A^\ast$ and $\psi \in \Dom A$, we can now calculate:

\begin{align*}
    \inner{A^\ast\phi}{\psi} &= \inner{\phi}{A\psi} = \inner{\phi_1'}{\psi_2'}_{L^2} + \inner{\phi_2}{\psi_1''}_{L^2} = -\psi_2(a)\left(\overline{\phi_1'(a+)} - \overline{\phi_1'(a-)}\right) \\ &\quad- \overline{\phi_2(a)}\left(\psi_1'(a+) - \psi_1'(a-)\right) - \inner{\phi_1''}{\psi_2}_{L^2} - \inner{\phi_2'}{\psi_1'}_{L^2} \\
    &= -\psi_2(a)\left(\overline{\phi_1'(a+)} -\overline{\phi_1'(a-)} + \alpha\overline{\phi_2(a)}\right) + \inner{\begin{pmatrix}
        0 & -I \\ -\partial_{xx} & 0 
    \end{pmatrix}\begin{pmatrix}
        \phi_1 \\ \phi_2
    \end{pmatrix}}{\begin{pmatrix}
        \psi_1 \\ \psi_2
    \end{pmatrix}}.
\end{align*}
This already forces $\phi_1'(a+) - \phi_1'(a-) = - \overline{\alpha} \phi_2(a)$ and consequently, indeed $A^\ast(a, \alpha) = -A(a, -\overline{\alpha})$. We have proven Proposition \ref{AdjointProposition}.

\subsection{The resolvent}\label{SecResolvent}
Next, we provide an explicit construction of the resolvent. Consider the equation
\[
(A-\lambda I)\begin{pmatrix}
    u \\ v
\end{pmatrix} = \begin{pmatrix}
    f \\ g
\end{pmatrix}
\]
for some $\lambda \in \mathbb C$, $(u, v)^T \in \Dom A$ and $(f, g)^T \in \mathcal H$.
This gives us $v = \lambda u + f$ and $u'' -(\lambda + \alpha \delta_a)v = g$. Substituting the first equation into the second, we have the Sturm-Liouville problem
\begin{equation}
    u'' - \lambda(\lambda + \alpha\delta_a)u = (\lambda + \alpha\delta_a)f + g.
    \label{Resolventu}
\end{equation}
The approach is standard. We choose solutions $u_1$ and $u_2$ to \eqref{Resolventu} with zero right-hand side such that
\[
u_1(0) = 0 = u_2(\pi), \quad u_1'(0) = 1, \quad u_2'(\pi) = -1.
\]
The Green function is of the form
\begin{equation}
\mathcal G_\lambda(x,y) = -\frac{1}{u_1(\pi)} \begin{cases} u_1(x)u_2(y), & 0 \leq x \leq y \leq \pi, \\ 
u_1(y)u_2(x), & 0 \leq y \leq x \leq \pi,    
\end{cases}
\label{eq: Green function}
\end{equation}
where the denominator $-u_1(\pi)$ is the constant Wronskian of the associated homogeneous equation. The solution to \eqref{Resolventu} is then given by
\begin{equation}
u(x) = \int_0^\pi\mathcal G_\lambda(x,y)[(\lambda + \alpha\delta_a(y))f(y) + g(y)]\dd y \eqqcolon G_\lambda\left[(\lambda +\alpha\delta_a)f + g\right](x),
\label{GreenOperator}
\end{equation}
where $G_\lambda$ is the corresponding Green operator. Combined with the equation for $v$, we arrive at the following result.
\begin{proposition}
Let $a \in (0, \pi)$, $\alpha \in \mathbb C$, $\lambda \in \rho(A(a, \alpha))$. The resolvent of $A$ in $\lambda$ is
\begin{equation}
(A-\lambda I)^{-1} = \begin{pmatrix}
    G_\lambda(\lambda + \alpha\delta_a) & G_\lambda \\ I + \lambda G_\lambda(\lambda + \alpha\delta_a) & \lambda G_\lambda
\end{pmatrix},
    \label{Resolvent}
\end{equation}
where $G_\lambda$ is the Green operator defined in \eqref{GreenOperator}.
\end{proposition}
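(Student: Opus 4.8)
The plan is to identify the operator $R$ given by the right-hand side of \eqref{Resolvent} with the resolvent by exploiting that $\lambda \in \rho(A)$ already guarantees $A - \lambda I$ is a bijection with bounded inverse. The computation preceding the statement establishes the necessity direction: any $(u,v)^T \in \Dom A$ solving $(A-\lambda I)(u,v)^T = (f,g)^T$ must satisfy $v = \lambda u + f$, with $u$ solving the Sturm--Liouville problem \eqref{Resolventu}. Hence the whole argument reduces to showing that $u = G_\lambda[(\lambda + \alpha\delta_a)f + g]$ is the unique solution of \eqref{Resolventu} that is compatible with membership in $\Dom A$; once this is in place, reading off the two components $u$ and $v$ through $f$ and $g$ reproduces exactly the matrix in \eqref{Resolvent}, and boundedness of $R$ on $\mathcal H$ follows from the Hilbert--Schmidt bound on $G_\lambda$ together with the continuity of the evaluation $f \mapsto f(a)$ on $\dot H_0^1(0,\pi) \hookrightarrow C[0,\pi]$.

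First I would record that the homogeneous version of \eqref{Resolventu} reads $u'' - \lambda^2 u = \lambda\alpha\, u(a)\,\delta_a$ distributionally, so the boundary solutions $u_1, u_2$ are piecewise combinations of $\sinh(\lambda\cdot)$ and $\cosh(\lambda\cdot)$ glued at $a$ through the derivative jump $u_j'(a+) - u_j'(a-) = \lambda\alpha\, u_j(a)$. A short computation of the Wronskian $W = u_1 u_2' - u_1' u_2$ shows that both the continuity defect and the distributional derivative defect produced by the two delta terms cancel, so $W$ is genuinely constant and equals $-u_1(\pi)$; this legitimizes the prefactor $-1/u_1(\pi) = 1/W$ in $\mathcal G_\lambda$ and the Green operator construction. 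The only delicate point is that $u_1(\pi) \neq 0$ whenever $\lambda \in \rho(A)$: if $u_1(\pi) = 0$, then $(\psi_1,\psi_2)^T = (u_1, \lambda u_1)^T$ satisfies both Dirichlet conditions and the domain jump relation in \eqref{DefinitionOfA}, so it lies in $\Dom A$ and is an eigenvector of $A$ for the eigenvalue $\lambda$, contradicting $\lambda \in \rho(A)$ (note $u_1 \not\equiv 0$ since $u_1'(0) = 1$). Arguing this way keeps the proof independent of Theorem \ref{CharacteristicFunction}.

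It then remains to verify that $u = G_\lambda[(\lambda + \alpha\delta_a)f + g]$ genuinely lands in $\Dom A$. Here the term $\alpha\delta_a f$ is read as $\alpha f(a)\delta_a$, which is meaningful precisely because $f \in \dot H_0^1(0,\pi) \hookrightarrow C[0,\pi]$, so the right-hand side of \eqref{Resolventu} is an $L^2$ function plus a point mass at $a$. Standard Green-function estimates give $u \in H^2(0,a) \cap H^2(a,\pi)$ with the Dirichlet conditions $u(0) = u(\pi) = 0$ inherited from $u_1(0) = u_2(\pi) = 0$, while the point mass in the right-hand side produces exactly the jump $u'(a+) - u'(a-) = \alpha v(a)$ after substituting $v = \lambda u + f$ and using $v(a) = \lambda u(a) + f(a)$. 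Setting $v = \lambda u + f$ then places $v \in \dot H_0^1(0,\pi)$, so $(u,v)^T \in \Dom A$ as required.

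The main obstacle is the bookkeeping of the distributional delta terms: one must confirm that the single equation \eqref{Resolventu}, understood distributionally, simultaneously encodes the interior equation $u'' - \lambda v = g$ on $(0,a) \cup (a,\pi)$ and the domain jump condition, and that the Green operator reproduces this jump rather than introducing a spurious one. Once the Wronskian cancellation and the identity $v(a) = \lambda u(a) + f(a)$ are secured, the remaining verifications are routine, and combining the necessity derivation already in the text with the uniqueness coming from $u_1(\pi)\neq 0$ yields the formula \eqref{Resolvent}.
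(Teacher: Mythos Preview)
Your proposal is correct and follows essentially the same route as the paper: derive $v=\lambda u+f$, reduce to the Sturm--Liouville problem \eqref{Resolventu}, solve via the Green function built from $u_1,u_2$, and read off the block matrix. The paper is terser and omits the verifications you supply (nonvanishing of $u_1(\pi)$ via the eigenvalue dichotomy, the Wronskian cancellation, and membership of $(u,v)^T$ in $\Dom A$), so your write-up is a fleshed-out version of the same argument rather than a different one.
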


We will now show that $A^{-1}$ is Hilbert-Schmidt by directly calculating its Hilbert-Schmidt (HS) norm. Note that setting $\lambda = 0$ in \eqref{Resolvent}, the inverse simplifies to
\[
A^{-1} = \begin{pmatrix}
    G_0(\alpha\delta_a) & G_0 \\ I & 0
\end{pmatrix}.
\]

To calculate the HS norm, we will use the orthonormal basis of eigenfunctions of the unbounded operator $A_0$ with $\alpha = 0$. In such case, the operator is skew-adjoint, as follows from Proposition~\ref{AdjointProposition}.

We have
\[
A_0 = \begin{pmatrix}
    0 & I \\  \partial_{xx} & 0
\end{pmatrix},\quad \Dom A_0 = H_0^2(0, \pi) \times \dot H_0^1(0, \pi).
\]
Solving 
\[
A_0\begin{pmatrix}
    u \\ v
\end{pmatrix} = \lambda \begin{pmatrix}
    u \\ v
\end{pmatrix}
\]
gives $v = \lambda u$ and so $u'' - \lambda^2 u = 0$ with Dirichlet boundary conditions. 
Therefore,
\[
\lambda_n = \mathrm{i}n, \quad
\omega_n(x) = \frac{1}{n\sqrt{\pi}}\sin(nx)\begin{pmatrix}
    1 \\ \mathrm{in}
\end{pmatrix}, \quad n \in \mathbb Z \setminus \{0\},
\]
where $\omega_n$ are the normalized eigenfunctions. It is straightforward to check that the system $\left\{\omega_n\right\}_{n\in\mathbb Z \setminus \{0\}}$ forms an orthonormal basis in $\mathcal H$. It suffices to use the orthonormal bases of $L^2(0,\pi)$ formed by sines and cosines, respectively, and verify Parseval's inequality for $\left\{\omega_n\right\}_{n\in\mathbb Z \setminus \{0\}}$ in $\mathcal H$.

Back to the HS norm calculation; clearly
\[
(A^{-1}\omega_n)(x) = \frac{1}{n\sqrt{\pi}}\begin{pmatrix}
    \alpha \sin(na) \mathcal G_0(x, a) + \mathrm{i}n \int_0^\pi \mathcal G_0(x,y) \sin(ny) \dd y \\
    \sin(nx)
\end{pmatrix}.
\]
To find $\mathcal G_0$, we must solve for $u_1$ and $u_2$. A simple calculation yields $u_1(x) = x$, $u_2(x) = \pi -x$. Therefore,
\[
\mathcal G_0(x,y) = -\frac{1}{\pi}\begin{cases}
    x(\pi-y), & 0 \leq x \leq y \leq \pi, \\
    y(\pi-x), & 0 \leq y \leq x \leq \pi.
\end{cases}
\]

Consequently,
\[
\partial_x\mathcal G_0(x,y) = \begin{cases}
    -\frac{\pi-y}{\pi}, & 0 \leq x < y \leq \pi, \\
    \frac{y}{\pi}, & 0 \leq y < x \leq \pi
\end{cases}
\]
and
\[
\int_0^\pi \partial_x\mathcal G_0(x,y)\sin(ny) \dd y = \frac{1}{\pi}\left(\int_0^x y\sin(ny) \dd y -\int_x^\pi (\pi-y)\sin(ny)\dd y \right) = -\frac{\cos(nx)}{n}.
\]
We obtain
\[
\left(A^{-1}\omega_n\right)_1'(x) = \frac{1}{n\sqrt{\pi}}\left[\alpha\sin(na)\partial_x \mathcal G_0(x,a) - \mathrm{i}\cos(nx)\right].
\]
The $\dot H_0^1$ norm is bounded by
\begin{align*}
\norm{\left(A^{-1}\omega_n\right)_1'}^2_{L^2} &\leq \frac{1}{\pi n^2}\int_0^\pi |\alpha|^2\sin^2(na)(\partial_x \mathcal G_0(x,a))^2 + \cos^2(nx) \dd x\\
&= \frac{1}{\pi n^2} \left(\abs{\alpha}^2\frac{a(\pi-a)}{\pi}\sin^2(na) + \frac{\pi}{2}\right).
\end{align*}
Note that for $\alpha \in \mathbb R$, we obtain equality.

Overall, we have
\begin{align*}
\norm{A^{-1}\omega_n}^2 &= \norm{(A^{-1}\omega_n)_1'}^2_{L^2} + \norm{(A^{-1}\omega_n)_2}^2_{L^2} \leq \frac{1}{\pi n^2}\left(\abs{\alpha}^2\frac{a(\pi-a)}{\pi}\sin^2(na) + \pi\right)
.
\end{align*}

Taking the Fourier expansion of $x(\pi-x)$ on $(0, \pi)$, one has
\[
x(\pi-x) = \frac{\pi^2}{6} - \sum_{n=1}^\infty\frac{\cos(2nx)}{n^2} = 2\sum_{n=1}^\infty \frac{\sin^2(nx)}{n^2}.
\]

Finally, the HS norm is
\begin{equation*}
\norm{A^{-1}}_{\text{HS}}^2 = \sum_{n\in\mathbb Z \setminus\{0\}}\norm{A^{-1}\omega_n}^2 \leq \left(\frac{|\alpha|a(\pi-a)}{\pi}\right)^2 + \frac{\pi^2}{3}.
\end{equation*}

Realizing that $a(\pi-a)\leq \pi^2/4$, we can also obtain a bound independent of $a$.
\begin{theorem}\label{T: HS norm}
    Let $a \in (0, \pi)$, $\alpha \in \mathbb C$. The inverse $A^{-1}(a, \alpha)$ is a Hilbert-Schmidt operator with norm satisfying
\begin{equation}
    \norm{A^{-1}}_{\text{HS}}^2 \leq \left(\frac{|\alpha|a(\pi-a)}{\pi}\right)^2 + \frac{\pi^2}{3} \leq \pi^2\left(\frac{|\alpha|^2}{16} + \frac{1}{3}\right).
    \label{HSnorm}
\end{equation}
The first inequality becomes equality if $\alpha \in \mathbb R$, the second one for $a=\pi/2$.
\end{theorem}
It follows that the inverse is compact and $A$ has discrete spectrum.

\subsection{The characteristic function}
We are now fully equipped to find the characteristic function of $A$ and show how it determines the spectrum -- this constitutes the proof of Theorem \ref{CharacteristicFunction}. 

Recall that the index of an eigenvalue $\lambda \in \sigma(A)$ is defined as
\[
\iota(\lambda) \coloneqq \min \left\{k \in \mathbb N \mid \ker (A-\lambda I)^k = \ker (A-\lambda I)^{k+1}\right\}.
\]
For convenience, we set $\iota(\lambda) = 0$ whenever $\lambda \notin \sigma_p(A)$.  Notice that if, in particular $\iota(\lambda) = 1$, then the algebraic multiplicity of $\lambda$ coincides with its geometric multiplicity. Conversely, if $\lambda$ is a geometrically simple eigenvalue, then $\iota(\lambda)$ is equal to its algebraic multiplicity.

\begin{proposition}\label{IndexPole}
    Let $A$ be a densely defined operator in a Banach space that possesses a compact inverse and $\lambda \in \mathbb C$. Then the index $\iota(\lambda)$ is equal to the order of $\lambda$ as a pole of the resolvent.
\end{proposition}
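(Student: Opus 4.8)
The plan is to localise the problem at the isolated point $\lambda$ by means of the Riesz spectral projection and to reduce the claim to an elementary statement about a nilpotent operator on a finite-dimensional space.

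First I would extract the structural consequences of the hypothesis. Since $A^{-1}$ is bounded and defined on all of $X$, the operator $A$ is closed and $0\in\rho(A)$; since $A^{-1}$ is compact, the Riesz--Schauder theory gives $\sigma(A^{-1})=\{0\}\cup\{\mu_j\}$ with $\mu_j\to 0$ and each $\mu_j\neq 0$ an eigenvalue of finite algebraic multiplicity. Via the correspondence $\lambda\leftrightarrow\lambda^{-1}$ this shows that $\sigma(A)=\sigma_p(A)$ is a discrete set of eigenvalues of finite algebraic multiplicity, each of them isolated, and that $z\mapsto R(z):=(A-zI)^{-1}$ is a meromorphic $\mathcal B(X)$-valued function on $\mathbb C$. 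If $\lambda\notin\sigma_p(A)$ then $\lambda\in\rho(A)$, so $R$ is holomorphic at $\lambda$ (a pole of order $0$), in agreement with the convention $\iota(\lambda)=0$; it therefore remains to treat $\lambda\in\sigma_p(A)$.

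For such a $\lambda$ I would choose a small positively oriented circle $\gamma$ enclosing $\lambda$ and no other point of $\sigma(A)$, and form the Riesz projection $P=-\frac{1}{2\pi\mathrm{i}}\oint_\gamma R(z)\,dz$. The holomorphic functional calculus for closed operators yields a bounded projection commuting with $A$ and an $A$-invariant topological direct sum $X=X_1\oplus X_2$, $X_1=\Ran P$, $X_2=\ker P$, such that $A_1:=A|_{X_1}$ has $\sigma(A_1)=\{\lambda\}$ and $\lambda\in\rho(A_2)$ for $A_2:=A|_{X_2}$; moreover $X_1$ is finite-dimensional because $R$ is compact. The resolvent splits as $R(z)=(A_1-zI)^{-1}\oplus(A_2-zI)^{-1}$, the second summand being holomorphic at $\lambda$, so the order of $\lambda$ as a pole of $R$ equals its order as a pole of $R_1(z):=(A_1-zI)^{-1}$. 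On the finite-dimensional space $X_1$ the operator $N:=A_1-\lambda I$ has spectrum $\{0\}$, hence is nilpotent of some index $p$ (that is, $N^{p}=0\neq N^{p-1}$). Summing the terminating Neumann series gives $R_1(z)=-\sum_{k=0}^{p-1}N^{k}(z-\lambda)^{-k-1}$, whose most singular term $-N^{p-1}(z-\lambda)^{-p}$ has nonzero coefficient; thus $\lambda$ is a pole of $R$ of order exactly $p$. It then remains to identify $p$ with $\iota(\lambda)$: because the splitting is $A$-invariant and $A_2-\lambda I$ is invertible, every generalized eigenvector lies in $X_1$, so $\ker(A-\lambda I)^{k}=\ker N^{k}\subseteq X_1$ for all $k$ and $X_1=\ker(A-\lambda I)^{p}$; since the kernels of $N^{k}$ strictly increase for $k\le p$ and stabilise at $k=p$, the ascent of $A-\lambda I$ equals $p$, so $\iota(\lambda)=p$ is the pole order.

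I expect the delicate point to be the passage to $X_1$: rigorously producing the Riesz projection and the invariant splitting for the merely closed (unbounded) operator $A$, and then verifying that $\Ran P$ coincides with the full generalized eigenspace $\bigcup_k\ker(A-\lambda I)^{k}$ — the inclusion $\ker(A-\lambda I)^{k}\subseteq\Ran P$ being exactly what ties the nilpotency index to the ascent. This rests on the compactness of $R$ (finite multiplicity, so that the kernels genuinely stabilise) and on the invertibility of $A_2-\lambda I$, and it is the step where I would invoke the functional-calculus machinery rather than compute by hand.
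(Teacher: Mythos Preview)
Your argument is correct, and the delicate point you flag is exactly the right one. The route, however, differs from the paper's. You work directly with the unbounded operator $A$: Riesz projection for the closed operator, invariant splitting $X=X_1\oplus X_2$, nilpotent $N=A_1-\lambda I$ on the finite-dimensional range, explicit terminating Laurent series for $R_1(z)$, and finally the identification of the nilpotency index with the ascent. The paper instead transfers everything to the compact \emph{bounded} inverse $T=A^{-1}$: it cites a reference for the equality of indices of $\lambda\in\sigma_p(A)$ and $1/\lambda\in\sigma_p(T)$, then uses the ordinary bounded functional calculus to write $(T-\mu I)^kP_\mu=-\frac{1}{2\pi\mathrm i}\oint_\gamma(z-\mu)^k(T-zI)^{-1}\,dz$, from which the pole order of $(T-zI)^{-1}$ at $\mu=1/\lambda$ is read off, and finally passes back to $A$ via the identity $(A-zI)^{-1}=-z^{-1}T(T-z^{-1}I)^{-1}$. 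Your approach is more self-contained and conceptually direct, at the price of invoking the functional calculus for merely closed operators and having to argue that $\Ran P$ coincides with the full generalised eigenspace; the paper's detour through $T$ keeps all the spectral theory in the compact bounded setting, where these facts are standard, but outsources the $A\leftrightarrow T$ index correspondence to a textbook citation.
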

\begin{proof}
    Denote $T \coloneqq A^{-1}$ the compact inverse. Then $\lambda$ is an eigenvalue of $A$ if and only if $1/\lambda$ is an eigenvalue of $T$. Moreover, both geometric and algebraic multiplicities are the same and so are the indices. \cite[Theorem 9.2.3]  {EdmundsEvans}

    Assume arbitrary $\mu \in \mathbb C \setminus \{0\}$ and denote
    \[
    P_\mu \coloneqq -\frac{1}{2\pi \mathrm{i}}\oint_\gamma(T-zI)^{-1}\dd z.
    \]
    If $\mu$ is an eigenvalue of $T$, then $P_\mu$ is the Riesz projection onto the root subspace associated with $\mu$. Here $\gamma$ is an arbitrary positively oriented Jordan curve in the resolvent set $\rho(T)$ such that no eigenvalue lies in its interior $D$ except possibly $\mu$. Then its index is 
    \[
    \iota(\mu) = \min\left\{k \in \mathbb N \mid (T-\mu I)^k P_\mu = 0\right\} < \infty.
    \]
    Thanks to the functional calculus, we have
    \[
    (T-\mu I)^kP_\mu = - \frac{1}{2\pi \mathrm{i}}\oint_\gamma (z-\mu)^k(T-zI)^{-1} \dd z, \quad k \in \mathbb N_0.
    \]
    Therefore, $\iota(\mu)$ is the lowest $k \in \mathbb N_0$ such that $(z-\mu)^k(T-zI)^{-1}$ is holomorphic in $D$. In other words, $\iota(\mu)$ is equal to the order of the pole $\mu$ of $(T-zI)^{-1}$.

    It remains to be shown that the order of the pole $1/\lambda$ of $(T-zI)^{-1}$ is precisely the order of the pole $\lambda$ of $(A-zI)^{-1}$. This follows from the simple observation that
    \[
    (A-zI)^{-1} = -\frac{1}{z}T\left(T-\frac{1}{z} I\right)^{-1}, \quad z \in \rho(A) \setminus \{0\}. \qedhere
    \]

\end{proof}

\begin{corollary}
    Let $A$ be a densely defined operator in a Banach space that possesses a compact inverse. Let $\lambda$ be a geometrically simple eigenvalue of $A$. Then the algebraic multiplicity of $\lambda$ is equal to its order as a pole of the resolvent.
\end{corollary}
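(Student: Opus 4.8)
The plan is to combine Proposition~\ref{IndexPole}, which already identifies the index $\iota(\lambda)$ with the order of $\lambda$ as a pole of the resolvent, with the elementary Jordan structure forced by geometric simplicity. Thus it suffices to prove that for a geometrically simple eigenvalue the algebraic multiplicity equals $\iota(\lambda)$, and then quote Proposition~\ref{IndexPole} to replace $\iota(\lambda)$ by the pole order.

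First I would recall that, since $A$ has a compact inverse $T = A^{-1}$, the Riesz theory applies to the eigenvalue $1/\lambda$ of $T$: the ascending chain of kernels $\ker(A-\lambda I)^k$ stabilises exactly at $k = \iota(\lambda)$, the root subspace is $\ker(A-\lambda I)^{\iota(\lambda)}$, and it is finite-dimensional with dimension equal, by definition, to the algebraic multiplicity of $\lambda$. Writing $d_k \coloneqq \dim\ker(A-\lambda I)^k$, we have $d_0 = 0$, $d_1 = 1$ by geometric simplicity, the sequence $(d_k)$ is non-decreasing, and it is constant for $k \geq \iota(\lambda)$, with algebraic multiplicity $= d_{\iota(\lambda)}$.

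The key step is to show that the successive increments $d_{k+1} - d_k$ are non-increasing. This follows from the standard observation that $A - \lambda I$ maps $\ker(A-\lambda I)^{k+1}$ into $\ker(A-\lambda I)^{k}$ and descends to a well-defined \emph{injective} linear map
\[
\ker(A-\lambda I)^{k+1}\big/\ker(A-\lambda I)^{k} \longrightarrow \ker(A-\lambda I)^{k}\big/\ker(A-\lambda I)^{k-1},
\]
whence $d_{k+1}-d_k \leq d_k - d_{k-1}$. Since $d_1 - d_0 = 1$ and all increments are non-negative integers each bounded above by its predecessor, every increment is either $0$ or $1$, and once one vanishes all later ones vanish as well. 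Hence the increments read $1,1,\dots,1,0,0,\dots$ with exactly $\iota(\lambda)$ ones, giving $d_{\iota(\lambda)} = \iota(\lambda)$; that is, the algebraic multiplicity equals $\iota(\lambda)$.

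Applying Proposition~\ref{IndexPole} then finishes the argument. I expect the only point requiring genuine care to be the injectivity of the induced quotient map, equivalently the assertion that $(A-\lambda I)v \in \ker(A-\lambda I)^{k-1}$ forces $v \in \ker(A-\lambda I)^{k}$; this is immediate from the definitions but must be written out, as it is precisely where the non-increasing nature of the increments—and thus the reduction to a single Jordan block—comes from. Everything else is bookkeeping resting on the compactness of $A^{-1}$ granted in the hypothesis.
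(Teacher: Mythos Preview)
Your proposal is correct and follows exactly the natural route the paper leaves implicit: the paper states the corollary without proof, immediately after Proposition~\ref{IndexPole}, relying on the reader to supply the elementary fact that geometric simplicity forces a single Jordan chain in the (finite-dimensional) root subspace, whence algebraic multiplicity equals index. Your quotient-map argument for the non-increase of $d_{k+1}-d_k$ is the standard way to make this precise, so there is nothing to add.
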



Note that by \cite{BRT} all eigenvalues of $A$ are geometrically simple. As we have already found the resolvent in Proposition~\ref{Resolvent}, we can see that the order of its pole $\lambda$ is the order of $\lambda$ as a pole of the Green operator $G_\lambda$ and, as a consequence, of the Green function $\mathcal G_\lambda$. The only possible singularity of $\mathcal G_\lambda$ appears in the denominator $u_1(\pi).$ Solving \eqref{Resolventu} with zero right-hand side, $u_1(0) = 0$, and $u_1'(0) = 1$, one obtains the solution
\[
u_1(x) = \frac{1}{\lambda}\begin{cases}
\sinh(\lambda x), & 0 \leq x \leq a,\\
-\left(\cosh(\lambda\pi) +\alpha \sinh(\lambda a)\cosh(\lambda(\pi-a))\right)\sinh(\lambda(\pi-x)) \\+ \left(\sinh(\lambda\pi) + \alpha \sinh(\lambda a)\sinh(\lambda(\pi-a))\right)\cosh(\lambda(\pi-x)), & a \leq x \leq \pi.
\end{cases}
\]
It follows that the order of the pole of $(A-z I)^{-1}$ in $\lambda$ is exactly its multiplicity as a root of
\begin{equation}
S(\lambda; a, \alpha) \coloneqq u_1(\pi) = \frac{1}{\lambda}\left(\sinh(\lambda\pi) + \alpha\sinh(\lambda a) \sinh(\lambda(\pi-a))\right).
\end{equation}
Theorem~\ref{CharacteristicFunction} is thus proven.

Differentiating the function twice with respect to $\lambda$, it is a simple matter to show that no root of $S(\cdot; a, \alpha)$ is of higher multiplicity than 2.

\begin{proposition}\label{P: Roots at most double}
    All roots of $S(\cdot; a, \alpha)$ are at most double.
\end{proposition}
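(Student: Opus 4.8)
The plan is to reduce the statement to the numerator of $S$ and then rule out a zero of order three by a direct computation. Writing $S(\lambda;a,\alpha)=f(\lambda)/\lambda$ with $f(\lambda)\coloneqq\sinh(\lambda\pi)+\alpha\sinh(\lambda a)\sinh(\lambda(\pi-a))$ and noting $S(0;a,\alpha)=\pi\neq 0$, the zero of $f$ at the origin is cancelled, so for every $\lambda_0\neq 0$ the order of $\lambda_0$ as a root of $S$ equals its order as a root of $f$. Thus it suffices to show that $f$ has no zero of order $\geq 3$ in $\mathbb{C}\setminus\{0\}$. First I would use the product-to-sum identity to write $f(\lambda)=\sinh(\lambda\pi)+\tfrac{\alpha}{2}\cosh(\lambda\pi)-\tfrac{\alpha}{2}\cosh(\lambda c)$ with $c\coloneqq\pi-2a\in(-\pi,\pi)$, displaying $f$ as a combination of the four exponential modes $e^{\pm\pi\lambda}$, $e^{\pm c\lambda}$.

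A naive linear-independence (Vandermonde) argument in these four modes only bounds the order of a zero by $3$, so the real content is to gain one more order from the paired $\pm$ structure. Since $\sinh(\lambda\pi)$ and $\cosh(\lambda\pi)$ solve $g''=\pi^2 g$ while $\cosh(\lambda c)$ solves $g''=c^2 g$, I would extract the two annihilator identities
\begin{align*}
f''(\lambda)-\pi^2 f(\lambda)&=\tfrac{\alpha}{2}(\pi^2-c^2)\cosh(\lambda c),\\
f''(\lambda)-c^2 f(\lambda)&=(\pi^2-c^2)\bigl(\sinh(\lambda\pi)+\tfrac{\alpha}{2}\cosh(\lambda\pi)\bigr).
\end{align*}
Suppose $\lambda_0\neq 0$ is a zero of order $\geq 3$, so $f(\lambda_0)=f'(\lambda_0)=f''(\lambda_0)=0$. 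After disposing of $\alpha=0$ (then $f=\sinh(\lambda\pi)$, all zeros simple) I may assume $\alpha\neq 0$; the case $c=0$, i.e. $a=\pi/2$, is then immediate since the first identity at $\lambda_0$ reads $\tfrac{\alpha}{2}\pi^2=0$, forcing $\alpha=0$. For $c\neq 0$, evaluating both identities at $\lambda_0$ and using $\pi^2\neq c^2$ yields $\cosh(\lambda_0 c)=0$ and $\sinh(\lambda_0\pi)+\tfrac{\alpha}{2}\cosh(\lambda_0\pi)=0$.

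The decisive step is the observation that $\cosh(\lambda_0 c)=0$ with $c$ real forces $\lambda_0=\mathrm{i}\tau$ to be purely imaginary, with $\tau\in\mathbb{R}\setminus\{0\}$ and $\sin(\tau c)=\pm 1$. I would then substitute $\lambda_0=\mathrm{i}\tau$ into $\sinh(\lambda_0\pi)+\tfrac{\alpha}{2}\cosh(\lambda_0\pi)=0$, which becomes $\mathrm{i}\sin(\tau\pi)+\tfrac{\alpha}{2}\cos(\tau\pi)=0$ and in particular forces $\cos(\tau\pi)\neq 0$ (otherwise $\sin$ and $\cos$ would vanish together), and into the remaining equation $f'(\lambda_0)=0$. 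Eliminating $\alpha$ between these two relations collapses everything to $\sin(\tau\pi)=\pm\pi/c$. As $\tau$ is real we have $|\sin(\tau\pi)|\leq 1$, whereas $|\pi/c|>1$ because $|c|<\pi$; this is the desired contradiction.

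I expect the main obstacle to be precisely the gap between what the generic exponential-mode count gives ($\leq 3$) and what is claimed ($\leq 2$): closing it requires genuinely using the structure of $f$, and the cleanest device I see is the pair of annihilator identities, which separate the $\cosh(\lambda c)$ term from the $\pi$-block. Once that is in place, the only care needed is the bookkeeping of the degenerate cases $\alpha=0$ and $a=\pi/2$ and the verification that $\cos(\tau\pi)\neq 0$, so that the elimination of $\alpha$ is legitimate and the final bound $|\sin(\tau\pi)|\leq 1$ can be invoked.
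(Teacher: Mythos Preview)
Your proof is correct. The setup coincides with the paper's: both work with $F(\lambda)=\lambda S(\lambda;a,\alpha)$, rewrite it via the product--to--sum identity as $\sinh(\lambda\pi)+\tfrac{\alpha}{2}\cosh(\lambda\pi)-\tfrac{\alpha}{2}\cosh(\lambda c)$ with $c=\pi-2a$, and use the relation $F''-\pi^2F=\tfrac{\alpha}{2}(\pi^2-c^2)\cosh(\lambda c)$ to conclude $\cosh(\lambda_0 c)=0$, hence $\lambda_0=\mathrm{i}\tau$ purely imaginary. (Your second annihilator identity is not strictly needed: once $\cosh(\lambda_0 c)=0$, the equation $F(\lambda_0)=0$ already gives $\sinh(\lambda_0\pi)+\tfrac{\alpha}{2}\cosh(\lambda_0\pi)=0$.) The endgames differ. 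The paper splits on whether $\Re\alpha\neq 0$ or $\alpha\in\mathrm{i}\mathbb{R}$: in the first case the real part of $\mathrm{i}\sin(\tau\pi)+\tfrac{\alpha}{2}\cos(\tau\pi)=0$ forces $\cos(\tau\pi)=0$ and then $\sin(\tau\pi)=0$; in the second it brings in $F'(\lambda_0)=0$ to reach the same contradiction. Your route is case--free: eliminating $\alpha$ between $F(\lambda_0)=0$ and $F'(\lambda_0)=0$ yields $\pi=c\sin(\tau c)\sin(\tau\pi)$, i.e.\ $|\sin(\tau\pi)|=\pi/|c|>1$, impossible for real $\tau$. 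This is a cleaner and more uniform conclusion; the paper's argument, in exchange, is marginally shorter in the case $\Re\alpha\neq 0$ since it never touches $F'$.
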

\begin{proof}
    Let $F(\lambda) \coloneqq \lambda S(\lambda; a, \alpha)$ for simplicity. Note that
    \[
    F(\lambda) = \sinh(\lambda\pi) + \frac{\alpha}{2}\cosh(\lambda\pi)-\frac{\alpha}{2}\cosh(\lambda(\pi-2a)).
    \]

    We have
    \begin{equation}
    F'(\lambda) = \pi\cosh(\lambda\pi) + \pi\frac{\alpha}{2}\sinh(\lambda\pi) - (\pi-2a)\frac{\alpha}{2}\sinh(\lambda(\pi-2a))
    \label{FDerivative}
    \end{equation}
    and for the second derivative
    \begin{align*}
    F''(\lambda) &= \pi^2\sinh(\lambda\pi) + \pi^2 \frac{\alpha}{2}\cosh(\lambda\pi) -(\pi-2a)^2\frac{\alpha}{2}\cosh(\lambda(\pi-2a)) \\
    &= \pi^2F(\lambda) + 2a\alpha(\pi-a)\cosh(\lambda(\pi-2a)).
    \end{align*}

    Suppose for a contradiction that 
    \begin{equation}
    F(\lambda_0) = F'(\lambda_0) = F''(\lambda_0) = 0
    \label{eq: Triple root}
    \end{equation}
    for some $\lambda_0 \in \mathbb C$. Let us first treat the case $\alpha = 0$. Then $F(\lambda_0) = \sinh(\lambda_0\pi) = 0$ means that $F'(\lambda_0) = \pi\cosh(\lambda_0\pi) \neq 0$ which contradicts \eqref{eq: Triple root}.

    If $\alpha \neq 0$, then \eqref{eq: Triple root} forces 
    \[
    \cosh(\lambda_0(\pi-2a)) = 0 \implies \lambda_0 = \frac{\mathrm{i}\pi\left(n + \frac{1}{2}\right)}{\pi - 2a}, \quad n \in \mathbb Z.
    \]
    The implication is only valid if $a \neq \pi/2$. In the case $a=\pi/2$, the left-hand side becomes $1=0$, which is a contradiction.

    Simultaneously,
    \begin{equation}
    0 = F(\lambda_0) = \sinh(\lambda_0\pi) + \frac{\alpha}{2}\cosh(\lambda_0\pi) = \sinh(\mathrm{i}\omega) + \frac{\alpha}{2}\cosh(\mathrm{i}\omega) = \mathrm{i}\sin\omega + \frac{\alpha}{2}\cos\omega,
    \label{DoubleEigenvaluesProof}
    \end{equation}
    where we denoted $\omega \coloneqq \frac{\pi^2\left(n + \frac{1}{2}\right)}{\pi-2a}$.

    If $\Re \alpha \neq 0$, then $\cos\omega = 0$ and consequently also $\sin\omega = 0$ -- a contradiction.
    
    If $\alpha \in \mathrm{i}\mathbb R$, the equation \eqref{FDerivative} for $F'(\lambda) = 0$ yields $\cos\omega = \cosh(\lambda_0\pi) = 0$ by taking the real part. However, \eqref{DoubleEigenvaluesProof} then forces also $\sin\omega = 0$ giving us the same contradiction as above.
\end{proof}

Concerning eigenvalue degeneracy, in \cite{BRT}, the authors show that every eigenvalue of $A(a, \alpha)$ with $\alpha > 0$ is geometrically simple. Although the proof for complex $\alpha$ is identical, we present it here for the convenience of the reader.

\begin{lemma}
    All eigenvalues of $A(a, \alpha)$ are non-degenerate.
\end{lemma}
\begin{proof}
    Let $\psi, \phi \in \mathcal H$ be two eigenvectors corresponding to the eigenvalue $\lambda \in \mathbb C$, that is,
    \[
    (A-\lambda I)\psi = (A-\lambda I)\phi = 0.
    \]
    Then we get
    \[
    \psi_1'' - \lambda(\lambda + \alpha \delta_a)\psi_1 = \phi_1''-\lambda(\lambda + \alpha\delta_a)\phi_1 = 0, \quad \psi_2 = \lambda \psi_1,\,\,\phi_2 = \lambda\phi_1.
    \]
    
    Assume for contradiction that $\psi$ and $\phi$ are linearly independent. Then one can choose a non-trivial linear combination $\eta = (u,v)^T \in \mathcal H$ of $\psi$ and $\phi$ such that $u'(0) = 0$. At the same time, we have $u(0) = 0$ and $(u,v)^T$ satisfy
    \[
    u'' - \lambda(\lambda + \alpha\delta_a)u = 0, \quad v = \lambda u.
    \]
    Therefore, on $(0, a)$, we have $u = v = 0$. It follows that $u'(a-) = 0$ and $v(a) = 0$ by continuity. Therefore, one also has that $u'(a+) = u'(a-) + \alpha v(a) = 0$. Consequently, $u = 0$ on $(0, \pi)$ and the same goes for $v = \lambda u$. We have found that $\eta = 0$ which contradicts its construction. Hence, $\psi$ and $\phi$ must be linearly dependent.
\end{proof}

The two results above can be summarised as follows.

\begin{corollary}\label{C: Algebraic multiplicity}
All eigenvalues of $A(a, \alpha)$ are algebraically at most double and geometrically simple. 
\end{corollary}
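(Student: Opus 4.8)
The plan is to obtain the corollary by quoting three facts already in place, with no new computation required; the statement merely packages them. It asserts two things about every eigenvalue of $A(a,\alpha)$---geometric simplicity and an algebraic multiplicity bounded by two---and each is supplied by a prior result.

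I would first dispose of geometric simplicity, which is precisely the assertion proved by Bamberger, Rauch, and Taylor in \cite{BRT}; it is invoked directly, with no further argument. For the algebraic bound the plan is to compose two results. By Theorem \ref{CharacteristicFunction}, the algebraic multiplicity of any eigenvalue $\lambda$ equals its order as a zero of the entire function $S(\cdot;a,\alpha)$. The Proposition immediately preceding shows that $S(\cdot;a,\alpha)$ possesses no zero of order greater than two. Chaining these two statements gives algebraic multiplicity at most two for every eigenvalue, uniformly in $a\in(0,\pi)$ and $\alpha\in\mathbb C$.

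Since both halves follow by citation, there is no real obstacle at this stage: the substance was already discharged earlier, in the resolvent-pole identification behind Theorem \ref{CharacteristicFunction} and in the contradiction argument---splitting into the cases $\Re\alpha\neq0$ and $\alpha\in\mathrm i\mathbb R$---that rules out triple zeros of $S(\cdot;a,\alpha)$. The only point worth checking is that Theorem \ref{CharacteristicFunction} covers all eigenvalues without exception, so that the multiplicity dictionary it furnishes is exhaustive; this is indeed the case, and the conclusion follows for every $a\in(0,\pi)$ and $\alpha\in\mathbb C$.
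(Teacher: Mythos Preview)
Your proposal is correct and mirrors the paper's own reasoning exactly: the corollary is stated immediately after the proposition bounding the root multiplicity of $S$, with the single line ``Combined with the result of \cite{BRT}, this immediately leads to the following conclusion,'' which is precisely the two-citation argument you give.
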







\section{Basis of root vectors}\label{sec.Basis}

\subsection{Rational placement of the damping}
First, we will discuss the special case $a=p\pi/q$, where $p$ and $q$ are coprime positive integers. In \cite{CH}, Cox and Henrot noticed that the characteristic function $S$ can be rewritten as
\begin{equation}
S(\lambda; p\pi/q, \alpha) = -\frac{1}{4\lambda}\mathrm{e}^{\lambda\pi}P_\alpha(\mathrm{e}^{-2\lambda\pi/q}),
\label{SusingP}
\end{equation}
where
\begin{equation}
P_\alpha(z) \coloneqq (2-\alpha)z^q + \alpha z^p + \alpha z^{q-p} - (2+\alpha).
\label{Polynomial}
\end{equation}
Note that $1$ is a simple root of $P_\alpha$ and that $\lambda \in \mathbb C$ is a root of $S(\cdot; a, \alpha)$ of multiplicity $m$ if and only if $\mathrm{e}^{-2\lambda\pi/q}$ is a root of $P_\alpha$ of multiplicity $m$.

From here it can be observed more closely why the model shows unpredictable behaviour at $\alpha = \pm 2$. Denoting $\zeta_k = |\zeta_k|\mathrm{e}^{\mathrm{i}\theta_k}$ the roots of $P_\alpha$, with the convention $\zeta_1 = 1$, we arrive at the system of eigenvalues
\begin{align}
    \lambda_{1,n}& = \mathrm{i}qn, \quad n\in \mathbb Z \setminus\{0\}, \nonumber\\
    \lambda_{k,n} &= -\frac{q}{2\pi}(\ln|\zeta_k| + \mathrm{i}(\theta_k + 2\pi n)), \quad n \in \mathbb Z,\, k\in \{2, \dots, \deg P_\alpha\}.
    \label{EigenvaluesUsingRoots}
\end{align}
If we let $\psi_{k,n}$ denote the corresponding eigenvectors of $A(p\pi/q, \alpha)$, we find
\begin{align*}
\psi_{1,n} &= \sin(nqx)\begin{pmatrix}
    1 \\ \mathrm{i}nq
\end{pmatrix}, \quad n\in \mathbb Z \setminus \{0\}, \\
\psi_{k,n} &= \begin{pmatrix}
    u_{k,n} \\ \lambda_{k,n}u_{k,n}
\end{pmatrix}, \quad n \in \mathbb Z,\, k \in \{2, \dots, \deg P_\alpha\},
\end{align*}
where
\[
u_{k,n}(x) \coloneqq \begin{cases}
    \sinh(\lambda_{k,n}(\pi-a))\sinh(\lambda_{k,n}x), & \text{for } 0 \leq x \leq a, \\
    \sinh(\lambda_{k,n}a)\sinh(\lambda_{k,n}(\pi-x)), & \text{for } a \leq x \leq \pi.
\end{cases}
\]

Similarly, if $\lambda_{k+1, n} = \lambda_{k,n}$, that is, $\zeta_k = \mathrm{e}^{-2\lambda_{k,n}\pi/q}$ is a double root of $P_\alpha$, we obtain the generalized eigenvector by solving the equation $(A-\lambda_{k,n}I)\tilde \psi_{k,n} = \psi_{k,n}$ as
\[
\psi_{k+1,n} \equiv \tilde \psi_{k,n} = \begin{pmatrix}
    \tilde u_{k,n}\\
    u_{k,n} + \lambda_{k,n} \tilde u_{k,n}
\end{pmatrix},
\]
where
\[ \tilde u_{k,n} =
\begin{cases}
    x\sinh(\lambda_{k,n}(\pi-a))\cosh(\lambda_{k,n}x) + (\pi-a)\sinh(\lambda_{k,n}x), & 0 \leq x \leq a, \\
    -x\cosh(\lambda_{k,n}a)\cosh(\lambda_{k,n}(\pi-x)) + a\cosh(\lambda_{k,n}a)\sinh(\lambda_{k,n}(\pi-x)), &  a \leq x \leq \pi. 
\end{cases}
\]



By Proposition \ref{AdjointProposition}, $A^\ast(a, \alpha) = - A(a, -\overline \alpha)$; therefore, the spectrum is $\sigma(A^\ast) = \sigma_p(A^\ast) = \{\overline\lambda_{k,n}\}_{k,n}$ and the corresponding eigenvectors $\phi_{k,n}$ can be found in a similar fashion.

Combined with proper normalization, we have found a biorthogonal sequence to the sequence $\{\psi_{k,n}\}_{k,n}$ of eigenvectors of $A$. In \cite{CH}, it is shown that when normalized as $\psi_{k,n}/\lambda_{k,n}$, the sequence is Bessel, i.e. for all $\psi \in \mathcal H$ holds
\[
\sum_{k,n}\left|\inner{\psi}{\psi_{k,n}/\lambda_{k,n}}\right|^2 < +\infty.
\]
The same can be done for the biorthogonal sequence. To decide whether the systems are complete, Cox and Henrot invoked the Livšic criterion:

\begin{theorem} (Livšic, \cite[Theorem V.2.1]{GohbergKrein}).\label{Livšic}
    Let $T$ be a compact operator in a Hilbert space $\mathcal H$. Suppose that $\Re T \coloneqq \frac{1}{2}(T+T^\ast)$ is dissipative and trace-class. Then
    \[
    \Tr(\Re T) \leq \sum_{\lambda\in \sigma_p(T)}\Re  \lambda
    \]
    with eigenvalues repeated according to their algebraic multiplicity. Equality holds if and only if the root vectors of $T$ are complete in $\mathcal H$.
\end{theorem}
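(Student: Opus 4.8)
The statement is the classical Livšic theorem, so my proposal is to reconstruct its proof inside the Gohberg--Krein theory of maximal invariant chains for compact operators, which is the setting of \cite{GohbergKrein}. Up to the rotation $T\mapsto\mathrm iT$, which turns $\Re T$ into the imaginary part and sends the eigenvalue $\lambda$ to $\mathrm i\lambda$ (so that $\Re\lambda=\Im(\mathrm i\lambda)$), the formulation above is the usual dissipative one; I will work directly with $\Re T$, reading \emph{dissipative} as $\Re T\le 0$. One first records that, with eigenvalues counted according to algebraic multiplicity, the hypotheses make the sum on the right converge.

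The first step is to invoke a maximal chain $\{P_t\}_{t\in[0,1]}$ of $T$-invariant orthogonal projections, increasing from $P_0=0$ to $P_1=\Id$, relative to which $T$ is lower triangular, meaning $P_tTP_t=TP_t$. The device driving the proof is the scalar function $g(t)\coloneqq\Tr\!\big(P_t\,(\Re T)\,P_t\big)$, well defined because $\Re T$ is trace class. Since $P_t$ increases and $\Re T\le 0$, the function $g$ is nonincreasing, running from $g(0)=0$ to $g(1)=\Tr(\Re T)$.

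Next I would analyse the jump part of $g$. At a point of increase $t_0$ the jump projection $\Delta P\coloneqq P_{t_0}-P_{t_0-}$ is orthogonal to $P_{t_0-}$, so the cross terms in the expansion of $g(t_0)-g(t_0-)$ vanish by cyclicity of the trace, and the jump reduces to $\Tr(\Delta P\,(\Re T)\,\Delta P)=\Re\,\Tr(\Delta P\,T\,\Delta P)$. Invariance of the chain makes the compression $\Delta P\,T\,\Delta P$ triangular, carrying an eigenvalue of $T$ on its diagonal, and as $t_0$ ranges over all jumps the corresponding diagonal entries run through the eigenvalues of $T$ repeated according to algebraic multiplicity (each root subspace for $\lambda$ being filled by atoms each contributing $\lambda$). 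Hence the jump part of $g$ sums to $\sum_{\lambda\in\sigma_p(T)}\Re\lambda$, whereas the continuous part of the monotone $g$ has increment of the sign fixed by $\Re T\le 0$; therefore $\Tr(\Re T)=\sum_{\lambda}\Re\lambda+(\text{continuous increment})\le\sum_{\lambda}\Re\lambda$. Equality forces the continuous increment to vanish, so $g$ is a pure jump function, which occurs precisely when the maximal chain has no continuous portion --- equivalently, when the root subspaces span $\mathcal H$, i.e. when the root vectors are total.

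The genuine obstacles are the two classical inputs carried by the chain machinery, not the bookkeeping above. The first is the existence of a maximal invariant chain whose diagonal reproduces the algebraic (not merely geometric) root data, which rests on compactness together with the Gohberg--Krein triangular representation. The second, and the real heart of the theorem, is the equivalence between the absence of a continuous portion of the maximal chain and the totality of the root system: this is where the analytic notion of completeness is translated into the structural statement that the invariant nest is purely atomic. Once these are granted, the inequality and its equality case follow from the monotonicity of $g$ and the elementary triangular computation of each jump.
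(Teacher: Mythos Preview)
The paper does not prove this statement: Theorem~\ref{Livšic} is quoted from \cite[Theorem~V.2.1]{GohbergKrein} and used as an external tool in Section~\ref{sec.Basis}. There is thus no proof in the paper to compare against; your proposal is a reconstruction of the Gohberg--Krein argument, and the triangular-model route you take is indeed theirs.

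The inequality part of your sketch is sound --- the monotonicity of $g(t)=\Tr\!\big(P_t(\Re T)P_t\big)$ follows from $\Re T\le 0$ and orthogonality of the increments, and the jump contributions are correctly identified. The equality case, however, has a gap not covered by the two ``obstacles'' you flag at the end. You write that the vanishing of the continuous increment of $g$ ``occurs precisely when the maximal chain has no continuous portion''; this biconditional is false. A maximal $T$-invariant chain may well carry a continuous segment on which every increment of $g$ vanishes: that happens exactly when the compression of $\Re T$ to that segment is zero (trace zero together with $\Re T\le 0$ forces the operator itself to vanish there), and nothing in your outline rules this out. What actually closes the argument is a separate lemma --- the analytic heart of Livšic's theorem --- stating that a compact quasinilpotent $V$ with $\Re V\le 0$ trace class and $\Tr(\Re V)=0$ must be zero (since $\Re V=0$ makes $V$ normal and quasinilpotent). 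Applied to the diagonal of $T$ along the continuous segment, this kills that diagonal and allows the segment to be absorbed into $\ker T$, i.e.\ into the root space for the eigenvalue $0$. The equivalence you single out at the end (``purely atomic nest $\Leftrightarrow$ root vectors total'') is therefore neither the step you actually need nor literally correct for a fixed maximal chain: a maximal invariant chain through an infinite-dimensional kernel can be chosen continuous.
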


An analogous statement holds for accretive operators with the opposite inequality.

The dissipativity of $A(a, \alpha)$ for $\alpha > 0$ was established in \cite{BRT}. Let us extend the result for a general $\alpha \in \mathbb C.$

\begin{proposition}
    The operator $A(a,\alpha)$ is
    \begin{enumerate}
        \item maximal dissipative if and only if $\Re \alpha \geq 0$,
        \item maximal accretive if and only if $\Re \alpha \leq 0$,
        \item skew-adjoint if and only if $\Re \alpha = 0$.
    \end{enumerate}
\end{proposition}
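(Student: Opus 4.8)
The plan is to reduce all three equivalences to one energy computation supplemented by the adjoint formula of Proposition~\ref{AdjointProposition}. First I would evaluate the sesquilinear form $\inner{A\psi}{\psi}$ on $\Dom A$. Writing $A\psi = (\psi_2,\psi_1'')^T$ and using the inner product of $\mathcal H$ gives $\inner{A\psi}{\psi} = \inner{\psi_2'}{\psi_1'}_{L^2} + \inner{\psi_1''}{\psi_2}_{L^2}$. The decisive move is to integrate the second term by parts, splitting the integral at $x=a$ to accommodate the jump of $\psi_1'$. The boundary terms at $0$ and $\pi$ vanish because $\psi_2\in\dot H_0^1(0,\pi)$, while the interior boundary contributions collapse to $-(\psi_1'(a+)-\psi_1'(a-))\overline{\psi_2(a)}$, which by the jump condition in \eqref{DefinitionOfA} equals $-\alpha|\psi_2(a)|^2$. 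Collecting terms yields
\begin{equation*}
\inner{A\psi}{\psi} = \left(\inner{\psi_2'}{\psi_1'}_{L^2} - \inner{\psi_1'}{\psi_2'}_{L^2}\right) - \alpha|\psi_2(a)|^2 ,
\end{equation*}
whose first bracket is of the form $z-\overline z$ and hence purely imaginary, so that
\begin{equation*}
\Re\inner{A\psi}{\psi} = -(\Re\alpha)\,|\psi_2(a)|^2 .
\end{equation*}

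From this identity the dissipative/accretive dichotomy is immediate. Since there exist $\psi\in\Dom A$ with $\psi_2(a)\neq 0$, the form is $\leq 0$ for all $\psi$ exactly when $\Re\alpha\geq 0$, and $\geq 0$ for all $\psi$ exactly when $\Re\alpha\leq 0$; this establishes the sign conditions for the dissipative and accretive parts (and, taking both, the vanishing of the real part precisely when $\Re\alpha=0$).

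The main obstacle is upgrading plain dissipativity to \emph{maximal} dissipativity, which I would treat through the Lumer--Phillips theorem rather than by a direct extension argument. The operator $A$ is densely defined and closed (it has compact resolvent), so it suffices to exhibit a single $\lambda_0>0$ with $\ran(A-\lambda_0 I)=\mathcal H$. When $\Re\alpha\geq 0$ the operator is dissipative, hence every eigenvalue $\lambda$ satisfies $\Re\lambda = \Re\inner{A\psi}{\psi}/\nr{\psi}^2 \leq 0$ for a corresponding eigenvector $\psi$; since the spectrum is purely discrete (Theorem~\ref{CharacteristicFunction} together with the Hilbert--Schmidt bound \eqref{HSnorm}, whence $\sigma(A)$ consists only of eigenvalues), the entire open right half-plane lies in $\rho(A)$, and the explicit resolvent \eqref{Resolvent} shows $A-\lambda_0 I$ is a bijection for any $\lambda_0>0$. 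Lumer--Phillips then delivers maximal dissipativity; applying the same reasoning to $-A$ gives maximal accretivity when $\Re\alpha\leq 0$. The converse directions are automatic, a maximal dissipative (resp. accretive) operator being in particular dissipative (resp. accretive).

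Finally, for the skew-adjoint statement I would invoke Proposition~\ref{AdjointProposition} directly: $A(a,\alpha)$ is skew-adjoint precisely when $A^\ast(a,\alpha)=-A(a,\alpha)$, i.e. when $-A(a,-\overline\alpha)=-A(a,\alpha)$, that is $A(a,-\overline\alpha)=A(a,\alpha)$. These two operators carry the same differential action and differ only through the jump constant in their domains, so they coincide if and only if $-\overline\alpha=\alpha$, equivalently $\Re\alpha=0$. (Alternatively, $\Re\alpha=0$ forces $A$ to be simultaneously maximal dissipative and maximal accretive, so $A$ generates a $C_0$-group of isometries, hence of unitaries, and is skew-adjoint by Stone's theorem.) I expect the energy identity and the surjectivity step to be the only places demanding genuine care; the remaining deductions are formal.
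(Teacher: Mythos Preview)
Your proof is correct and follows essentially the same line as the paper: the same integration-by-parts energy identity $\Re\inner{A\psi}{\psi}=-(\Re\alpha)|\psi_2(a)|^2$, and the skew-adjointness via Proposition~\ref{AdjointProposition}. The only minor variation is in the maximality step: the paper checks directly that $\Re S(\lambda;a,\alpha)>0$ for real $\lambda>0$ to place $(0,\infty)$ in $\rho(A)$, whereas you deduce $\Re\lambda\le 0$ for eigenvalues from the energy identity and then invoke discreteness of the spectrum---both routes land on the same Lumer--Phillips hypothesis.
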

\begin{proof}
    Let $\psi \in \Dom A$. Integrating by parts, we have
    \[
    \inner{\psi}{A\psi} = \inner{\psi_1'}{\psi_2'}_{L^2} - \inner{\psi_2'}{\psi_1'}_{L^2} - \alpha|\psi_2(a)|^2.
    \]
    Therefore,
    \[
    \Re\inner{\psi}{A\psi} = -\Re\alpha|\psi_2(a)|^2.
    \]
    The claim for dissipativity and accretivity follows. 
    
    The maximality is a consequence of Theorem \ref{CharacteristicFunction}. Clearly, for $\Re \alpha \geq 0$ and $\lambda > 0$, we have 
    \[
    \Re S(\lambda; a, \alpha) > 0.
    \]
    Therefore, $(0, +\infty) \subset \rho(A(a,\alpha))$. We proceed analogously for $\Re \alpha \leq 0$ to show that $(-\infty, 0) \subset \rho(A(a, \alpha))$.
    
    The characterisation of skew-adjointness is an immediate consequence of Proposition \ref{AdjointProposition}.
\end{proof}

It remains to apply the criterion to the compact inverse $T \coloneqq A^{-1}$ whose root vectors coincide with those of $A$. With $\Re \alpha \geq 0$ (respectively, $\Re \alpha \leq 0$), it is clearly dissipative (respectively, accretive) as an inverse of a dissipative (respectively, accretive) operator. Therefore, its real part is also dissipative (respectively, accretive) due to the identity
\[
\Re \inner{\psi}{\Re T\psi} = \Re \inner{\psi}{T\psi}.
\]

As $\Re A^{-1}$ is a one-dimensional (and thus trace-class) operator, calculation of the trace is simple and for $\alpha \in \mathbb R$ done in \cite{CH}. The extension for all complex $\alpha$ is straightforward but for self-containment, we provide the proof:
\begin{proposition}\label{Trace}
$\Tr\left(\Re A^{-1}(a, \alpha)\right) = - \frac{\Re \alpha(\pi-a)a}{\pi}$.
\end{proposition}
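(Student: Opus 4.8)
The plan is to first identify $\Re A^{-1}$ explicitly as a one-dimensional (rank-one) operator and then compute its trace against the orthonormal basis $\set{\omega_n}$ constructed in Section~\ref{SecResolvent}. The starting point is the simplified inverse $A^{-1} = \left(\begin{smallmatrix} G_0(\alpha\delta_a) & G_0 \\ I & 0\end{smallmatrix}\right)$ obtained by setting $\lambda = 0$ in \eqref{Resolvent}. First I would note that at $\lambda = 0$ the homogeneous Sturm--Liouville equation \eqref{Resolventu} reduces to $u'' = 0$, so the fundamental solutions $u_1(x)=x$, $u_2(x)=\pi-x$, and hence the Green operator $G_0$ from \eqref{GreenOperator}, do \emph{not} depend on $\alpha$.

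Next, using Proposition~\ref{AdjointProposition} in the form $(A^{-1})^\ast = (A^\ast)^{-1} = -A(a,-\overline\alpha)^{-1}$, together with the $\alpha$-independence of $G_0$ and linearity of $G_0(\,\cdot\,\delta_a)$, I would write
\[
(A^{-1})^\ast = -\begin{pmatrix} G_0(-\overline\alpha\delta_a) & G_0 \\ I & 0\end{pmatrix} = \begin{pmatrix} G_0(\overline\alpha\delta_a) & -G_0 \\ -I & 0\end{pmatrix}.
\]
Averaging, the off-diagonal blocks and the $(2,2)$ block cancel, leaving
\[
\Re A^{-1} = \tfrac12\bigl(A^{-1} + (A^{-1})^\ast\bigr) = \begin{pmatrix} (\Re\alpha)\,G_0(\delta_a\,\cdot) & 0 \\ 0 & 0\end{pmatrix}.
\]
Since $G_0(\delta_a f) = f(a)\,\mathcal G_0(\cdot,a)$, this is manifestly rank one, which confirms the assertion preceding the statement and in particular shows $\Re A^{-1}$ is trace-class.

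Then I would evaluate $\Tr(\Re A^{-1}) = \sum_{n\in\mathbb Z\setminus\{0\}}\inner{\omega_n}{\Re A^{-1}\omega_n}$. Only the $\dot H_0^1$ part of the inner product contributes; using $(\omega_n)_1'(x) = \pi^{-1/2}\cos(nx)$, the identity $(\omega_n)_1(a) = \sin(na)/(n\sqrt\pi)$, and the explicit $\partial_x\mathcal G_0(x,a)$ recorded in Section~\ref{SecResolvent}, each term reduces to
\[
\inner{\omega_n}{\Re A^{-1}\omega_n} = \frac{\Re\alpha\,\sin(na)}{n\pi}\int_0^\pi \cos(nx)\,\partial_x\mathcal G_0(x,a)\dd x = -\frac{\Re\alpha\,\sin^2(na)}{\pi n^2},
\]
where the elementary piecewise integral gives $\int_0^\pi \cos(nx)\,\partial_x\mathcal G_0(x,a)\dd x = -\sin(na)/n$ (using $\sin(n\pi)=0$). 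Summing over $n\in\mathbb Z\setminus\{0\}$ and invoking the same Fourier identity $a(\pi-a) = 2\sum_{n=1}^\infty \sin^2(na)/n^2$ already used for the Hilbert--Schmidt norm yields the claimed value $-\Re\alpha\,(\pi-a)a/\pi$.

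The only genuinely delicate point is the first step: because $\mathcal H$ carries the non-standard inner product $\inner{\phi}{\psi} = \inner{\phi_1'}{\psi_1'}_{L^2} + \inner{\phi_2}{\psi_2}_{L^2}$, the adjoint appearing in $(A^{-1})^\ast = (A^\ast)^{-1}$ must be taken with respect to \emph{this} inner product, which is exactly the adjoint for which Proposition~\ref{AdjointProposition} was established. Once this identification is secured, the cancellation producing the rank-one operator is automatic and the remaining computation is routine.
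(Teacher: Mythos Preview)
Your argument is correct. The identification of $\Re A^{-1}$ as the rank-one operator $(\Re\alpha)\,G_0(\delta_a\,\cdot)$ in the $(1,1)$ block via Proposition~\ref{AdjointProposition} is clean and matches what the paper has in mind (the proof is omitted for Proposition~\ref{Trace} itself, but the analogous Proposition~\ref{GraphTrace} is proved explicitly and follows the same pattern). Your observation that $G_0$ is $\alpha$-independent because the homogeneous equation at $\lambda=0$ is simply $u''=0$ is exactly the point that makes the block cancellation work.

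The one genuine difference is in how you evaluate the trace. You sum $\inner{\omega_n}{\Re A^{-1}\omega_n}$ over the full orthonormal basis and then invoke the Fourier identity $a(\pi-a)=2\sum_{n\ge 1}\sin^2(na)/n^2$. The paper's method (visible in the proof of Proposition~\ref{GraphTrace}) is more direct: since $\Re A^{-1}$ has one-dimensional range spanned by $(\mathcal G_0(\cdot,a),0)^T$, one extends this single normalized vector to an orthonormal basis and reads off the trace from that one diagonal entry, namely
\[
\Tr(\Re A^{-1}) = (\Re\alpha)\,\frac{\mathcal G_0(a,a)}{\norm{\partial_x\mathcal G_0(\cdot,a)}_{L^2}^2}\,\norm{\partial_x\mathcal G_0(\cdot,a)}_{L^2}^2 = (\Re\alpha)\,\mathcal G_0(a,a) = -\frac{\Re\alpha\,(\pi-a)a}{\pi},
\]
using $\mathcal G_0(a,a)=-a(\pi-a)/\pi$ directly. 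Your route recovers the same number but trades this one-line evaluation for an infinite sum plus a Fourier identity; the paper's route is shorter, while yours has the minor virtue of reusing computations already done for the Hilbert--Schmidt bound. Either way the result is established.
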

\begin{proof}
    It follows by Proposition \ref{AdjointProposition} and the form of the resolvent \eqref{Resolvent} that
    \[
    \Re A^{-1} = \frac{1}{2}(A^{-1} + (A^{-1})^\ast) = \begin{pmatrix}
        G_0(\Re \alpha \delta_a) & 0 \\
        0 & 0
    \end{pmatrix}.
    \]
    Therefore, using the definitions \eqref{GreenOperator} and \eqref{eq: Green function}, we arrive at
    \[
    \Re A^{-1}\begin{pmatrix}
        u \\ v
    \end{pmatrix} = \Re(\alpha)u(a)\begin{pmatrix}
       \mathcal G_0(\cdot, a) \\ 0
    \end{pmatrix}, \quad \begin{pmatrix}
        u \\ v
    \end{pmatrix} \in \mathcal H.
    \]
    Let us denote
    \[
    \rho(x) \coloneqq \pi \mathcal G_0(x, a) = \begin{cases}
        (\pi - a)x, &0 \leq x \leq a, \\
        a(\pi-x), &a \leq x \leq \pi.
    \end{cases}
    \]
    We can see that $\ran \Re A^{-1}$ is the one-dimensional subspace generated by $(\rho, 0)^T$.

    For the calculation of the trace, it is advantageous to choose any orthonormal basis that contains the normed vector $(\rho, 0)^T$. As $\norm{\rho}^{2} = \pi a(\pi-a)$, we calculate the trace as
    \[
    \Tr \Re A^{-1}  = \frac{1}{\pi a(\pi-a)}\inner{\begin{pmatrix}
    \rho \\ 0 
\end{pmatrix}}{\Re A^{-1}\begin{pmatrix}
    \rho \\ 0 
\end{pmatrix}} = -\frac{\Re \alpha}{\pi^2}\norm{\rho'}^2 = -\frac{\Re \alpha(\pi-a)a}{\pi}. \qedhere
\]
    
\end{proof}

We note that Proposition \ref{Trace} and Theorem \ref{Livšic} ensure that the spectral sum
\[
\sum_{\lambda \in \sigma_p(A^{-1})}\Re \lambda = \sum_{\lambda \in \sigma_p(A)} \Re \frac{1}{\lambda}
\]
converges. In fact, due to the dissipativity (respectively, accretivity) of $A$, the summands are all of the same sign, so the convergence is monotone.

For $\alpha > 0$, $\alpha\neq 2$, Cox and Henrot found that also
\[
\sum_{\lambda\in\sigma(A)} \Re \frac{1}{\lambda} = -\frac{\alpha(\pi-a)a}{\pi}
\]
and thus arrived at the result that the root vectors are complete by Theorem \ref{Livšic}. Combined with the Bessel property and the same two properties of the biorthogonal sequence, the root vectors form a Riesz basis in $\mathcal H$ due to Theorem \ref{RieszCriterion}.

Here, we will provide a detailed calculation for $\alpha = 2$ and extend the result for $\alpha = -2$. Recall that for $\alpha = 2$, the degree of the polynomial \eqref{Polynomial} is $r \coloneqq \max\{p, q-p\} < q$. Also note that, as pointed out in \cite{CH}, simply by differentiating the characteristic function \eqref{DefinitionOfS} from definition, we obtain the Taylor expansion
\begin{equation}
S(\lambda; a, \alpha) = \pi + \alpha a(\pi-a)\lambda + \mathcal O(\lambda^2).
    \label{Taylor}
\end{equation}

On the other hand, the following equality holds:

\begin{proposition}\label{TaylorWithP}
    $S(\lambda; p\pi/q, 2) = \pi - \pi\lambda\left(\sum_{\lambda\in\sigma(A)}\Re \frac{1}{\lambda} - \frac{\pi(q-r)}{q}\right) + \mathcal O(\lambda^2)$.
\end{proposition}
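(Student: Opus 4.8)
The plan is to read off the linear Taylor coefficient $S'(0)$ of $S(\cdot\,; p\pi/q, 2)$ from the factored form \eqref{SusingP}, to evaluate the spectral sum $\sum_{\lambda}\Re(1/\lambda)$ directly from the closed expressions \eqref{EigenvaluesUsingRoots} for the eigenvalues, and then to reconcile the two via an elementary identity relating each root $\zeta_k$ of $P_2$ to $\Re(1-\zeta_k)^{-1}$. The surplus term $\pi(q-r)/q$ in the statement is exactly the bookkeeping that survives from the collapse of the degree, $\deg P_2 = r < q$, and it is the crux of the whole matter.

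First I would specialise \eqref{SusingP}: for $\alpha = 2$ the polynomial degenerates to $P_2(z) = 2(z^p + z^{q-p} - 2)$, of degree $r = \max\{p, q-p\}$, with leading coefficient $2$ and a simple root at $\zeta_1 = 1$, so that $P_2(z) = 2\prod_{k=1}^r(z-\zeta_k)$. Since $\gcd(p,q)=1$, the equation $e^{\mathrm{i}p\varphi}+e^{\mathrm{i}(q-p)\varphi}=2$ forces $\varphi \equiv 0$, hence $\zeta_1 = 1$ is the only root on the unit circle and $|\zeta_k|\neq 1$ for $k\geq 2$. Substituting the factorisation into \eqref{SusingP} and differentiating the logarithm gives
\[
\frac{S'(\lambda)}{S(\lambda)} = -\frac{1}{\lambda} + \pi - \frac{2\pi}{q}\sum_{k=1}^r\frac{e^{-2\pi\lambda/q}}{e^{-2\pi\lambda/q}-\zeta_k}.
\]
The $k=1$ summand has a simple pole at $\lambda=0$ which cancels $-1/\lambda$ and leaves the constant $-\pi/q$; all other summands are regular there. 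Using $S(0)=\pi$ one obtains
\[
S'(0) = \pi^2\frac{q-1}{q} - \frac{2\pi^2}{q}\sum_{k=2}^r\frac{1}{1-\zeta_k},
\]
and, $P_2$ having real coefficients, the non-unit roots occur in conjugate pairs, so this last sum is real.

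Next I would compute the spectral sum. The purely imaginary eigenvalues $\lambda_{1,n}=\mathrm{i}qn$ contribute nothing to $\sum_{\lambda}\Re(1/\lambda)$, while for $k\geq 2$ the formula $\lambda_{k,n} = -\frac{q}{2\pi}(\ln|\zeta_k|+\mathrm{i}(\theta_k+2\pi n))$ together with the classical identity
\[
\sum_{n\in\mathbb Z}\frac{1}{a^2+(b+2\pi n)^2} = \frac{\sinh a}{2a(\cosh a - \cos b)}
\]
lets the sum over $n$ close to $-\frac{\pi}{q}\,\frac{\sinh(\ln|\zeta_k|)}{\cosh(\ln|\zeta_k|)-\cos\theta_k}$, which equals $-\frac{\pi}{q}\,\frac{|\zeta_k|^2-1}{|1-\zeta_k|^2}$ after writing the hyperbolic functions through $|\zeta_k|$ and using $|1-\zeta_k|^2 = 1 - 2\Re\zeta_k + |\zeta_k|^2$. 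Hence
\[
\sum_{\lambda\in\sigma(A)}\Re\frac{1}{\lambda} = -\frac{\pi}{q}\sum_{k=2}^r\frac{|\zeta_k|^2-1}{|1-\zeta_k|^2}.
\]

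Finally I would invoke the identity $\frac{|\zeta_k|^2-1}{|1-\zeta_k|^2} = 1 - 2\Re\frac{1}{1-\zeta_k}$, which recasts the spectral sum as $-\frac{\pi(r-1)}{q} + \frac{2\pi}{q}\sum_{k=2}^r\frac{1}{1-\zeta_k}$; eliminating $\sum_{k\geq 2}(1-\zeta_k)^{-1}$ between this and the expression for $S'(0)$ produces $S'(0) = \frac{\pi^2(q-r)}{q} - \pi\sum_{\lambda}\Re(1/\lambda)$, i.e. the stated coefficient. The main obstacle I anticipate is the second step: justifying the absolutely convergent rearrangement of $\sum\Re(1/\lambda)$ over the doubly-indexed spectrum, applying the series identity cleanly, and carrying the passage from the trigonometric form to the expression $(|\zeta_k|^2-1)/|1-\zeta_k|^2$ without sign slips. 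Everything else is the routine differentiation of \eqref{SusingP} and the final linear elimination.
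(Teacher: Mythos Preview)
Your proposal is correct and follows essentially the same route as the paper: factor $P_2$ over its $r$ roots, differentiate the product representation \eqref{SusingP} to express $S'(0)$ through $\sum_{k\ge 2}(1-\zeta_k)^{-1}$, evaluate the spectral sum via the Poisson-type series identity, and compare. The only cosmetic differences are that you use logarithmic differentiation of $S$ where the paper differentiates $F=\lambda S$ twice by the product rule, and you state the summation identity in the equivalent form $\sum_n\bigl(a^2+(b+2\pi n)^2\bigr)^{-1}=\sinh a/\bigl(2a(\cosh a-\cos b)\bigr)$ rather than the paper's $(n+\gamma)^2+\beta^2$ version; your explicit check that $|\zeta_k|\neq 1$ for $k\ge 2$ and that $\sum_{k\ge 2}(1-\zeta_k)^{-1}\in\mathbb R$ is a welcome addition that the paper leaves implicit.
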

\begin{proof}
    Note that $P_2(z) = 2(z^r + z^{q-r}-2)$. Then using \eqref{SusingP}, we have
    \[
    \lambda S(\lambda; p\pi/q, 2) \equiv F(\lambda) = -\frac{1}{4}\mathrm{e}^{\lambda\pi}P_{2}\left(\mathrm{e}^{-2\lambda\pi/q}\right) = -\frac{1}{2}\mathrm{e}^{\lambda\pi}\prod_{k=1}^r \left(\mathrm{e}^{-2\lambda\pi/q}-\zeta_k\right),
    \]
    where $\zeta_k$ are the roots of $P_2$, $\zeta_1 = 1$. Differentiating $F$, we have
    \[
    F'(\lambda) = - \frac{\pi}{2}\mathrm{e}^{\lambda\pi}\prod_{k=1}^r\left(\mathrm{e}^{-2\lambda\pi/q} - \zeta_k\right) + \frac{\pi}{q}\mathrm{e}^{\lambda\pi(1-2/q)}\sum_{j=1}^r\prod_{k\neq j}\left(\mathrm{e}^{-2\lambda\pi/q} - \zeta_k\right).
    \]
    Therefore,
    \[
    F'(0) = \frac{\pi}{q}\prod_{k=2}^r\left(1-\zeta_k\right) = \pi.
    \]
    Differentiating again, we obtain
    \begin{align*}
        F''(\lambda) = &\frac{\pi^2}{2}\mathrm{e}^{\lambda\pi}\prod_{k=1}^r\left(\mathrm{e}^{-2\lambda\pi/q}-\zeta_k\right) + \frac{2\pi^2}{q^2}(q-1)\mathrm{e}^{\lambda\pi(1-2/q)}\sum_{j=1}^r\prod_{k\neq j}\left(\mathrm{e}^{-2\lambda\pi/q}-\zeta_k\right) \\
        &- \frac{2\pi^2}{q^2}\mathrm{e}^{\lambda\pi(1-4/q)}\sum_{i,j =1}^r\prod_{k\neq i,j}\left(\mathrm{e}^{-2\lambda\pi/q}-\zeta_k\right).
    \end{align*}
    At $\lambda = 0$ this becomes
    \begin{align}
    F''(0) &= \frac{2\pi^2(q-1)}{q^2}\prod_{k=2}^r\left(1-\zeta_k\right) - \frac{2\pi^2}{q^2}\sum_{j=2}^r\prod_{k\neq 1, j}(1-\zeta_k) = \frac{2\pi^2(q-1)}{q}-\frac{4\pi^2}{q}\sum_{k=2}^r\frac{1}{1-\zeta_k} \nonumber\\
    & = \frac{2\pi^2}{q}\left[\sum_{k=2}^r \frac{\zeta_k+1}{\zeta_k - 1} + q-r\right].
    \label{Computation}
    \end{align}

    At the same time, \eqref{EigenvaluesUsingRoots} gives us
    \[
    \frac{1}{\lambda_{k,n}} = -\frac{2\pi}{q}\frac{\ln|\zeta_k| -\mathrm{i}(\theta_k + 2\pi n)}{\ln^2|\zeta_k| + (\theta_k + 2\pi n)^2}
    \]
    for $k \in \{1, \dots, \deg P_\alpha\}$ and $n \in \mathbb Z$. Therefore, the real parts are
    \[
    \Re \frac{1}{\lambda_{k,n}} = -\frac{\ln |\zeta_k|}{2\pi q}\frac{1}{\frac{\ln^2|\zeta_k|}{4\pi^2} + \left(n+\frac{\theta_k}{2\pi}\right)^2}.
    \]
    Note the for the sum of real parts, we may ignore the imaginary eigenvalues $\lambda_{1,n}$.

    To proceed with the proof, we need to know the sum of the following series.
    \begin{lemma}
        Let $\beta, \,\gamma \in \mathbb R$, $\beta \neq 0$. Then
        \begin{equation}
            \sum_{n\in\mathbb Z}\frac{1}{(n+\gamma)^2 + \beta^2} = \frac{\pi}{2\beta}\frac{\sinh(2\pi\beta)}{\cosh^2(\pi\beta) - \cos^2(\pi\gamma)}.
            \label{TheSeries}
        \end{equation}
    \end{lemma}
\begin{proof}
    It is simple to verify the assumptions for Poisson summation \cite[Theorem 2.4]{PoissonSum}. The Fourier transform of $f(x) = \frac{1}{(x+\gamma)^2 + \beta^2}$ is
    \[
    \hat f(\xi) = \int_{\mathbb R} \frac{\mathrm{e}^{-2\pi \mathrm{i}\xi x}}{(x+\gamma)^2 + \beta^2} \dd x = \mathrm{e}^{2\pi \mathrm{i}\xi \gamma}\int_{\mathbb R}\frac{\mathrm{e}^{-2\pi \mathrm{i}\xi x}}{x^2 + \beta^2} \dd x = \frac{\pi}{\beta}\mathrm{e}^{2\pi \mathrm{i}\xi \gamma - 2 \pi \mathrm{i} |\xi| \beta}
    \]
    using residues. Poisson summation then yields
    \begin{align*}
    \sum_{n\in \mathbb Z}f(n) &= \sum_{n\in\mathbb Z}\hat f(n) = \frac{\pi}{\beta}\sum_{n\in\mathbb Z}\mathrm{e}^{2 \pi \mathrm{i}(n\gamma - |n|\beta)} = \frac{\pi}{\beta}\left(\frac{1}{1-\mathrm{e}^{2\pi \mathrm{i}\gamma-2\pi\beta}} + \frac{1}{1-\mathrm{e}^{-2\pi\mathrm{i}\gamma-2\pi\beta}} - 1\right) \\
    &= \frac{\pi}{2\beta}\frac{\sinh(2\pi\beta)}{\cosh^2(\pi\beta) - \cos^2(\pi\gamma)} \qedhere
    \end{align*}
\end{proof}

Let us get back to the proof of Proposition \ref{TaylorWithP}. Setting $\gamma \coloneqq \theta_k/2\pi$ and $\beta \coloneqq \ln|\zeta_k|/2\pi$ in \eqref{TheSeries}, it follows that
\begin{align}
    \sum_{n\in\mathbb Z}\Re \frac{1}{\lambda_{k,n}} & = -\frac{\ln|\zeta_k|}{2\pi q}\frac{\pi^2}{\ln|\zeta_k|}\frac{\sinh\left(\ln|\zeta_k|\right)}{\cosh^2\left(\frac{\ln|\zeta_k|}{2}\right)-\cos^2(\theta_k/2)} = \frac{\pi}{q}\frac{1-|\zeta_k|^2}{|\zeta_k|^2 - 2\Re \zeta_k + 1} \nonumber\\
    & =\frac{\pi}{q}\Re\frac{\zeta_k + 1}{1- \zeta_k}.
    \label{SpectralTraceComputation}
\end{align}

Comparing with \eqref{Computation}, we have
\[
\Re F''(0) = -2\pi \sum_{k=2}^r\sum_{n\in \mathbb Z}\Re \frac{1}{\lambda_{k,n}} + \frac{2\pi^2(q-r)}{q}.
\]
This completes the proof since $F''(0)$ is real and all eigenvalues are repeated according to their algebraic multiplicity. Indeed, all eigenvalues are geometrically simple and thus the algebraic multiplicity coincides with the index, which in turn equals the eigenvalue's root multiplicity in $S(\cdot; a, \alpha)$. We have already noted that this is the same as the root multiplicity of $\mathrm{e}^{-2\lambda\pi/q}$ in $P_\alpha$.
\end{proof}

Comparing Proposition \ref{TaylorWithP} and \eqref{Taylor}, we can conclude with the following
\begin{theorem}\label{SpectralTrace}
    Let $a = p\pi/q$, $\alpha= \pm 2$. Then
    \[
    \sum_{\lambda \in \sigma(A)}\Re \frac{1}{\lambda} = - \frac{\alpha(\pi-a)a}{\pi} \pm \frac{\pi}{q}(q-r) = \begin{cases}
        -\frac{\alpha(\pi-a)a}{\pi} \pm a, & \text{for } 0 < a \leq \pi/2, \\
        -\frac{\alpha(\pi-a)a}{\pi} \pm (\pi-a), & \text{for } \pi/2 \leq a < \pi.
    \end{cases}
    \]
\end{theorem}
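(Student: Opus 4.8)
The plan is to read the identity off the first-order Taylor coefficient of the characteristic function at $\lambda = 0$, treating $\alpha = 2$ by a direct comparison and then obtaining $\alpha = -2$ from a spectral symmetry. Since both expansions \eqref{Taylor} and the one in Proposition~\ref{TaylorWithP} describe the \emph{same} entire function $S(\lambda; p\pi/q, 2)$, their coefficients must agree, and this single matching produces the formula.

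Concretely, for $\alpha = 2$ I would equate the two available expressions for the coefficient of $\lambda$. Expansion \eqref{Taylor} gives it as $\alpha a(\pi-a) = 2a(\pi-a)$, while Proposition~\ref{TaylorWithP} gives it as $-\pi\bigl(\sum_{\lambda\in\sigma(A)}\Re\frac1\lambda - \frac{\pi(q-r)}{q}\bigr)$. By uniqueness of the Taylor coefficients these coincide, and solving for the spectral sum gives
\[
\sum_{\lambda\in\sigma(A)}\Re\frac1\lambda = -\frac{2a(\pi-a)}{\pi} + \frac{\pi}{q}(q-r),
\]
which is exactly the claimed formula with $\alpha = 2$ and the upper sign. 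Note that all questions of multiplicity are already settled inside Proposition~\ref{TaylorWithP}, so nothing further is needed here.

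For $\alpha = -2$ the cleanest route is Proposition~\ref{AdjointProposition}. Since $-2$ is real, $A^\ast(a,-2) = -A(a,2)$, whence $\sigma(A(a,-2)) = -\overline{\sigma(A(a,2))}$ with algebraic multiplicities preserved. Thus each $\mu \in \sigma(A(a,2))$ corresponds to $\lambda = -\overline\mu \in \sigma(A(a,-2))$ with $\Re\frac1\lambda = -\Re\frac1\mu$, so the entire spectral sum simply changes sign. As both the main term $-\frac{\alpha a(\pi-a)}{\pi}$ and the correction $\pm\frac{\pi}{q}(q-r)$ flip sign under $\alpha\mapsto-\alpha$, this reproduces the formula with the lower sign. (Equivalently, one may argue from the roots: the identity $P_{-\alpha}(z) = -z^q P_\alpha(1/z)$ shows the nonzero roots of $P_{-2}$ are the reciprocals $1/\zeta_k$ of those of $P_2$, so $|\zeta_k|\mapsto|\zeta_k|^{-1}$ and $\theta_k\mapsto-\theta_k$, and a direct check shows \eqref{SpectralTraceComputation} changes sign term by term.) The terms $\Re\frac1{\lambda_{k,n}}$ decay like $n^{-2}$ and there are finitely many indices $k$, so the sum converges absolutely and every rearrangement is legitimate.

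Finally I would convert $\frac{\pi}{q}(q-r)$ into the two stated cases using $r = \max\{p,q-p\}$ and $a = p\pi/q$: if $0 < a \leq \pi/2$ then $p \leq q-p$, so $r = q-p$, $q-r = p$, and $\frac{\pi}{q}(q-r) = \frac{\pi p}{q} = a$; if $\pi/2 \leq a < \pi$ then $r = p$, $q-r = q-p$, and $\frac{\pi}{q}(q-r) = \pi - a$. The main obstacle is not computational but organisational: justifying the $\alpha = -2$ branch without re-deriving Proposition~\ref{TaylorWithP}, which the adjoint symmetry handles cleanly, and carefully tracking the two sign conventions so that both branches of the $\pm$ emerge correctly.
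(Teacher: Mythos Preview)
Your proposal is correct and follows essentially the same route as the paper: the $\alpha=2$ case comes from matching the linear coefficients in \eqref{Taylor} and Proposition~\ref{TaylorWithP}, and the $\alpha=-2$ case is obtained via the adjoint relation of Proposition~\ref{AdjointProposition}. Your write-up is in fact more explicit than the paper's one-line proof, spelling out the spectral symmetry $\sigma(A(a,-2))=-\overline{\sigma(A(a,2))}$ and the case split for $\frac{\pi}{q}(q-r)$ that the paper leaves to the reader.
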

\begin{proof}
    For $\alpha = -2$, thanks to Proposition \ref{AdjointProposition}, it holds that $\Re A^{-1}(a, -2) = -\Re A^{-1}(a, 2)$.
\end{proof}

Using Proposition \ref{Trace}, we can see that for $\alpha = 2$ holds
\[
\Tr\left(\Re A^{-1}\right) < \sum_{\lambda \in \sigma (A)} \Re \frac{1}{\lambda}.
\]
Analogously, for $\alpha = -2$ we have
\[
\Tr\left(\Re A^{-1}\right) > \sum_{\lambda \in \sigma (A)} \Re \frac{1}{\lambda}.
\]
Illustrations of both quantities are provided by Figure \ref{fig:Traces}.

\begin{figure}[h!]
    \centering
    \begin{subfigure}{0.45\textwidth}
        \includegraphics[width=\linewidth]{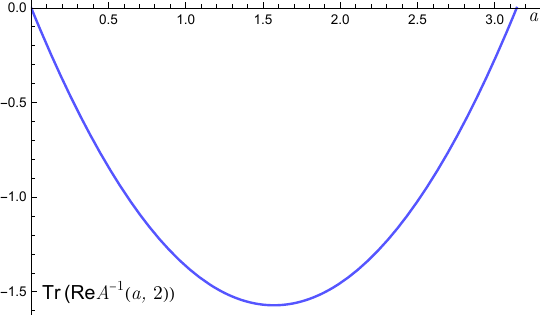}
        \caption{The trace $\Tr(\Re A^{-1}(a,\alpha))$ for $\alpha=2$.}
    \end{subfigure}
    \hfill
    \begin{subfigure}{0.45\textwidth}
        \includegraphics[width=\linewidth]{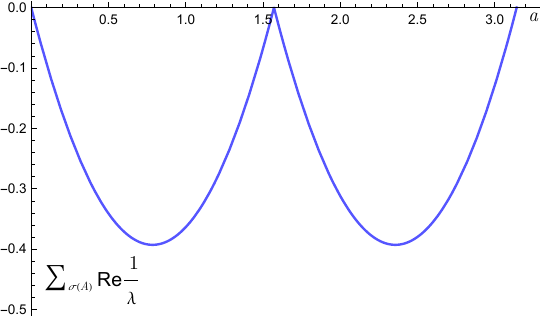}
        \caption{The sum $\sum_{\sigma(A)} \Re \frac{1}{\lambda}$ for $\alpha=2$.}
    \end{subfigure}
    \medskip 
    
    \begin{subfigure}{0.45\textwidth}
        \includegraphics[width=\linewidth]{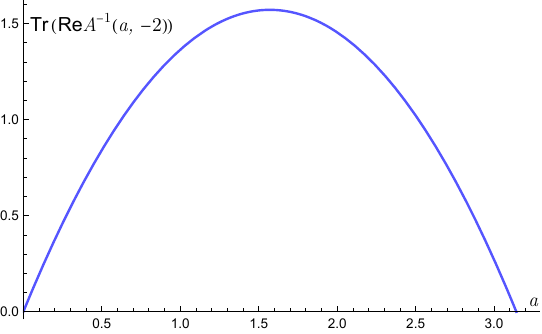}
        \caption{The trace $\Tr(\Re A^{-1}(a,\alpha))$ for $\alpha=-2$.}
    \end{subfigure}
    \hfill    
    \begin{subfigure}{0.45\textwidth}
        \includegraphics[width=\linewidth]{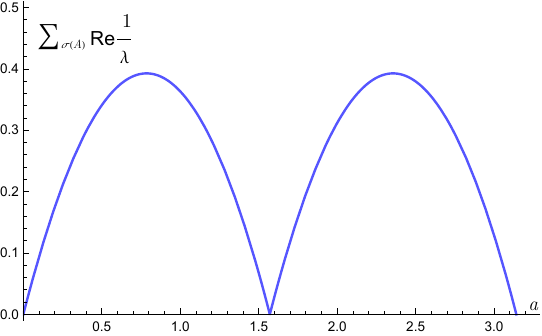}
        \caption{The sum $\sum_{\sigma(A)} \Re \frac{1}{\lambda}$ for $\alpha=-2$.}
    \end{subfigure}
     \caption{An illustration of the left- and right-hand sides of the condition in the Livšic criterion (Theorem \ref{Livšic}) as functions of $a$ with $\alpha= \pm 2$.}
    \label{fig:Traces}
\end{figure}

Thanks to the Livšic criterion (Theorem \ref{Livšic}), we can state the desired result for $\alpha = \pm 2$:

\begin{theorem}\label{RieszBasisCriterion}
    Let $a= p\pi/q$ and $\alpha= \pm 2$. Then the root vectors of $A(a, \alpha)$ are not complete in $\mathcal H$. Consequently, they do not form a Riesz basis.
\end{theorem}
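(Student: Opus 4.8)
\emph{Proof plan.} The plan is to apply the Livšic criterion (Theorem~\ref{Livšic}) to the compact operator $T \coloneqq A^{-1}(a, \alpha)$. The root subspaces of $A$ at an eigenvalue $\lambda$ are exactly the root subspaces of $T$ at $1/\lambda$, and the two operators share algebraic multiplicities (this is the correspondence recorded via \cite{EdmundsEvans} in the proof of Proposition~\ref{IndexPole}). Hence totality of the root vectors of $A$ is equivalent to totality of the root vectors of $T$, and the eigenvalue sum appearing in the criterion satisfies $\sum_{\mu \in \sigma_p(T)} \Re \mu = \sum_{\lambda \in \sigma(A)} \Re (1/\lambda)$, which is precisely the conditionally convergent spectral sum evaluated in Theorem~\ref{SpectralTrace}.

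First I would verify the hypotheses of the criterion. The operator $\Re T = \tfrac{1}{2}\bigl(A^{-1} + (A^{\ast})^{-1}\bigr)$ is self-adjoint and, being one-dimensional (the rank-one structure underlying Proposition~\ref{Trace}), trace-class. For $\alpha = 2$ its trace equals $-2a(\pi-a)/\pi < 0$, so a rank-one self-adjoint operator with negative trace is negative semidefinite, hence $\Re T$ is dissipative; for $\alpha = -2$ the identity $\Re A^{-1}(a,-2) = -\Re A^{-1}(a,2)$ (from Proposition~\ref{AdjointProposition}, used already in Theorem~\ref{SpectralTrace}) flips the sign, so $\Re T$ is accretive. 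Thus the dissipative form of the criterion applies when $\alpha = 2$ and the accretive form when $\alpha = -2$.

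Next I would invoke the strict inequalities established just before the statement: combining Proposition~\ref{Trace} with Theorem~\ref{SpectralTrace} gives $\Tr(\Re T) < \sum_{\lambda \in \sigma(A)} \Re (1/\lambda)$ for $\alpha = 2$ and $\Tr(\Re T) > \sum_{\lambda \in \sigma(A)} \Re (1/\lambda)$ for $\alpha = -2$, the gap having magnitude $\pi(q-r)/q$, which is strictly positive because $r = \max\{p, q-p\} < q$. Since the Livšic criterion attains equality exactly when the root vectors are total, the strictness of each inequality forces the root vectors of $T$, and therefore of $A$, to be non-total. Finally, since a Riesz basis is in particular complete, the failure of totality immediately precludes the Riesz basis property, giving the second assertion.

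The proof is short given the preceding machinery; the only genuinely non-routine points are confirming the semidefiniteness of $\Re T$ so that the correct one-sided form of the criterion is selected, and ensuring that the eigenvalue sum for $T$ is indeed the spectral sum of Theorem~\ref{SpectralTrace} (with multiplicities matched through the $A \leftrightarrow A^{-1}$ correspondence). Everything else is a direct substitution of the already-computed quantities.
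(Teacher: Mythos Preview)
Your proposal is correct and follows essentially the same route as the paper: apply the Livšic criterion to $T=A^{-1}$, compare $\Tr(\Re T)$ from Proposition~\ref{Trace} with the spectral sum from Theorem~\ref{SpectralTrace}, and use the strict inequality to conclude non-totality. The only minor difference is that you verify the required semidefiniteness of $\Re T$ via its rank-one structure and the sign of its trace, whereas the paper deduces it from the dissipativity/accretivity of $A$ itself established just before Proposition~\ref{Trace}; both arguments are valid and equally short.
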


\subsection{Complex damping parameter}
By careful analysis of the proof of Proposition \ref{TaylorWithP}, we are able to determine the Riesz basis property also for an arbitrary $\alpha \in \mathbb C \setminus \{\pm2\}$.

\begin{proposition}
    Let $\alpha \in \mathbb C \setminus\{\pm 2\}.$ Then 
    $
    \Re S'(0; p\pi/q, \alpha) = -2\pi\sum_{\lambda\in\sigma(A)}\Re \frac{1}{\lambda}.
    $
\end{proposition}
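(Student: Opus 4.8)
The plan is to transplant the computation behind Proposition~\ref{TaylorWithP} to an arbitrary $\alpha\in\mathbb C\setminus\{\pm2\}$, where now $P_\alpha$ keeps its full degree. Set $F(\lambda)\coloneqq\lambda S(\lambda;p\pi/q,\alpha)$, so by \eqref{SusingP} one has $F(\lambda)=-\tfrac14\mathrm e^{\lambda\pi}P_\alpha(\mathrm e^{-2\lambda\pi/q})$. Since $\alpha\neq2$ the leading coefficient $2-\alpha$ of \eqref{Polynomial} is nonzero and since $\alpha\neq-2$ the constant term $-(2+\alpha)$ is nonzero, so $P_\alpha$ has exactly $q$ nonzero roots $\zeta_1=1,\zeta_2,\dots,\zeta_q$ and factors as $P_\alpha(z)=(2-\alpha)\prod_{k=1}^q(z-\zeta_k)$. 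This replaces the degree-dropping factorisation $P_2(z)=2(z^r+z^{q-r}-2)$ used for $\alpha=2$; the essential structural simplification is that no degree drop occurs, so the correction term $\tfrac{2\pi^2(q-r)}{q}$ of Proposition~\ref{TaylorWithP} has no analogue here.

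First I would record the bookkeeping that converts a statement about $S$ into one about $F$: from $F=\lambda S$ one gets $F(0)=0$, $F'(0)=S(0)$ and, crucially, $F''(0)=2S'(0)$, so it suffices to compute $\Re F''(0)$. I would then differentiate the product form of $F$ twice at $\lambda=0$. The key point is that the factor $\mathrm e^{-2\lambda\pi/q}-\zeta_1$ vanishes at $\lambda=0$, so in $F'(0)$ and $F''(0)$ only those terms survive in which this vanishing factor has been differentiated away. Evaluating the surviving terms expresses $F''(0)$ through $\prod_{k=2}^q(1-\zeta_k)$ and $\sum_{k=2}^q\tfrac1{1-\zeta_k}$. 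The first symmetric quantity is pinned down by $P_\alpha'(1)=2q$ (a one-line check from \eqref{Polynomial}) together with $P_\alpha'(1)=(2-\alpha)\prod_{k=2}^q(1-\zeta_k)$; this cancels the prefactor $2-\alpha$ and, as a consistency check, reproduces $F'(0)=\pi$.

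Next I would bring in the spectral side using the series already summed in \eqref{SpectralTraceComputation}. The eigenvalues coming from $\zeta_1=1$ form the purely imaginary family $\lambda_{1,n}=\mathrm iqn$ of \eqref{EigenvaluesUsingRoots}, which contribute nothing to $\sum_\lambda\Re\tfrac1\lambda$, while for each $k\in\{2,\dots,q\}$ equation \eqref{SpectralTraceComputation} gives $\sum_{n\in\mathbb Z}\Re\tfrac1{\lambda_{k,n}}=\tfrac\pi q\Re\tfrac{\zeta_k+1}{1-\zeta_k}$. Rewriting $\tfrac{\zeta_k+1}{1-\zeta_k}=-1+\tfrac2{1-\zeta_k}$ turns the whole spectral sum into a linear combination of $q-1$ and $\Re\sum_{k=2}^q\tfrac1{1-\zeta_k}$ — exactly the two ingredients that appeared in $\Re F''(0)$. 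The proof then closes by comparing the two expressions; notably this requires no closed-form evaluation of $\sum_{k=2}^q\tfrac1{1-\zeta_k}$ in terms of $\alpha$, the final prefactor relating $\Re S'(0)$ to the spectral sum being supplied by the elementary relation $F''(0)=2S'(0)$.

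I expect the main obstacle to be the second-derivative bookkeeping for the product $\prod_{k=1}^q(\mathrm e^{-2\lambda\pi/q}-\zeta_k)$ at the degenerate point $\lambda=0$: one must correctly isolate the terms in which the vanishing factor $k=1$ is differentiated and reduce the double sum $\sum_{i\neq j}\prod_{k\neq i,j}(1-\zeta_k)$ to $2\prod_{k=2}^q(1-\zeta_k)\sum_{i=2}^q\tfrac1{1-\zeta_i}$. Everything else is absolutely convergent series manipulation already licensed by the Poisson-summation lemma preceding \eqref{SpectralTraceComputation}, and the concluding comparison is purely algebraic.
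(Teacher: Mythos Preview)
Your proposal is correct and follows essentially the same route as the paper: both observe that for $\alpha\neq\pm2$ the polynomial $P_\alpha$ keeps its full degree $q$ with no vanishing root, rerun the derivative computation \eqref{Computation} with $r$ replaced by $q$ (so the $q-r$ correction term drops out), and then invoke the $\alpha$-independent identity \eqref{SpectralTraceComputation} to match $\Re F''(0)$ with the spectral sum. The paper's proof is simply a two-line pointer back to those earlier computations, whereas you spell out the supporting bookkeeping ($P_\alpha'(1)=2q$, the role of $\alpha\neq-2$, and $F''(0)=2S'(0)$) explicitly.
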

\begin{proof}
    Note that for $\alpha \neq 2$, the polynomial $P_\alpha$ is of degree $q$. As a consequence, in \eqref{Computation}, we obtain
    \begin{equation}
    F''(0) = \frac{2\pi^2}{q}\sum_{k=2}^q\frac{\zeta_k+1}{\zeta_k-1}.
    \label{FSeconedDerivative}
    \end{equation}

    Since the calculation \eqref{SpectralTraceComputation} does not depend on $\alpha$, we can compare it with \eqref{FSeconedDerivative} to conclude
    \[
    \Re F''(0) = -2\pi \sum_{k=2}^q\sum_{n \in \mathbb Z}\Re \frac{1}{\lambda_{k,n}}. \qedhere
    \]
\end{proof}

Recalling the Taylor expansion \eqref{Taylor}, the above proposition yields for $a = p\pi/q$ and any $\alpha \in \mathbb C \setminus \{\pm2\}$:
\begin{equation}
\sum_{\lambda\in\sigma(A)}\Re\frac{1}{\lambda} = -\frac{\Re\alpha(\pi-a)a}{\pi} = \Tr\left(\Re A^{-1}(a, \alpha)\right),
\label{SpectralTraceGeneral}
\end{equation}
recalling Proposition \ref{Trace}.
Livšic criterion \ref{Livšic} then ensures that the root vectors are complete. The Bessel property is independent of $\alpha$. Therefore, using Theorem \ref{RieszCriterion}, we have managed to generalize the result of \cite{CH} also for $\alpha \in \mathbb C$:

\begin{theorem}
    Let $\alpha \in \mathbb C \setminus\{\pm 2\}$, $a = p\pi/q$. Then the root vectors of $A(p\pi/q, \alpha)$ form a Riesz basis in $\mathcal H$.
\end{theorem}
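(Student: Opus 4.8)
The plan is to verify directly the four hypotheses of Young's criterion (Theorem~\ref{RieszCriterion}) for the normalised system $\{\psi_{k,n}/\lambda_{k,n}\}$ and its biorthogonal companion $\{\phi_{k,n}\}$: that each is Bessel and each is total. The two Bessel conditions are the easy half. As already noted, the Bessel property of $\{\psi_{k,n}/\lambda_{k,n}\}$ was established by Cox and Henrot and, crucially, depends only on the asymptotic distribution of the eigenvalues and not on the value of $\alpha$; the same argument applies verbatim to the biorthogonal sequence built from the root vectors of $A^\ast$. Thus everything reduces to proving totality of both systems.

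For totality I would apply the Livšic criterion (Theorem~\ref{Livšic}) to the compact operator $T\coloneqq A^{-1}$, whose point spectrum is $\{1/\lambda : \lambda\in\sigma(A)\}$ with multiplicities preserved. Since $\Re A^{-1}$ is the rank-one self-adjoint operator of Proposition~\ref{Trace}, it is trace-class, and its single nonzero eigenvalue coincides with $\Tr(\Re A^{-1}) = -\Re\alpha(\pi-a)a/\pi$. Hence for $\Re\alpha>0$ the operator $\Re A^{-1}$ is dissipative and for $\Re\alpha<0$ it is accretive, so exactly one of the two forms of Livšic applies. The decisive input is the identity \eqref{SpectralTraceGeneral}, valid precisely because $\alpha\neq\pm2$ keeps $\deg P_\alpha=q$; it asserts that $\Tr(\Re A^{-1}) = \sum_{\lambda\in\sigma(A)}\Re(1/\lambda)$, which is exactly the equality case of Livšic, and therefore the root vectors of $A$ are total.

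The biorthogonal sequence is handled by the same device applied to $A^\ast$. By Proposition~\ref{AdjointProposition} its eigenvalues are the conjugates $\overline\lambda$, and since $\Re(1/\overline\lambda)=\Re(1/\lambda)$ while $\Re(A^\ast)^{-1}=\Re A^{-1}$, the trace identity for $A^\ast$ is identical to \eqref{SpectralTraceGeneral}. Livšic then yields totality of $\{\phi_{k,n}\}$ as well, and Young's criterion closes the argument. The residual case $\Re\alpha=0$ needs no Livšic machinery: there $A$ is skew-adjoint, so by the spectral theorem its eigenvectors already form an orthonormal, hence Riesz, basis.

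I do not expect a deep obstacle, since the analytic heart of the matter, the spectral-trace identity \eqref{SpectralTraceGeneral}, has already been secured in the preceding proposition. The only genuine care is bookkeeping: choosing the dissipative or accretive version of Livšic according to the sign of $\Re\alpha$, checking that the rank-one operator $\Re A^{-1}$ is actually sign-definite (not merely of the stated trace) so that the hypotheses of Theorem~\ref{Livšic} are met, and confirming that the biorthogonal system inherits the same equality rather than demanding a separate computation.
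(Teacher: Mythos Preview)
The proposal is correct and follows essentially the same route as the paper: invoke the spectral trace identity \eqref{SpectralTraceGeneral} to hit the equality case in Liv\v{s}ic's theorem for $A^{-1}$, obtain totality, pair this with the $\alpha$-independent Bessel estimates of Cox--Henrot, and close via Young's criterion. Your treatment is in fact slightly more careful than the paper's terse paragraph, in that you explicitly split on the sign of $\Re\alpha$ to choose the dissipative versus accretive form of Liv\v{s}ic, handle the biorthogonal system via $A^\ast$, and dispose of $\Re\alpha=0$ separately by skew-adjointness; none of this is a departure, only a spelling-out of what the paper leaves implicit.
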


\subsection{General placement of the damping}
In this section, we aim to extend the result to an arbitrary placement of the damping $a \in (0, \pi)$. We first note that if $a \notin \pi\mathbb Q$, then no purely imaginary eigenvalues exist except for the skew-adjoint case $\alpha \in \mathrm{i}\mathbb R$.

\begin{lemma}\label{L: No harmonic modes}
    Let $a \in (0,\pi)$ with $a \notin \pi\mathbb Q$ and $\alpha \in \mathbb C \setminus \mathrm{i}\mathbb R$. Then $\sigma(A) \cap \mathrm{i}\mathbb R = \emptyset$.
\end{lemma}
\begin{proof}
   Assume for contradiction that $S(\lambda;a,\alpha) = 0$ with $\lambda = \mathrm{i}\mu$, where $\mu \in \mathbb R \setminus \{0\}$. Then we find
   \[
   \mathrm{i}\sin(\mu\pi) - \alpha\sin(\mu a)\sin(\mu(\pi-a)) = 0.
   \]
   As $\Re \alpha \neq 0$, taking the real part yields $\sin(\mu a)\sin(\mu(\pi-a)) = 0$. Using this knowledge then gives also $\sin(\mu \pi) = 0$. Combined, the identities yield 
    \[\sin(\mu \pi) = \sin(\mu a) = \sin(\mu(\pi-a)) = 0.
    \]
    This readily gives us $a \in \mathbb Q \pi$ -- a contradiction.
\end{proof}

From Theorem \ref{SpectralTrace}, Proposition \ref{Trace} and the way they are used for Theorem \ref{Livšic}, it obviously suffices to show that the sum of the series
\[
\sum_{\lambda \in \sigma(A(a, \alpha))} \Re\frac{1}{\lambda}
\]
is continuous in $a$ to prove that the root vectors are complete. We will make use of the following recent result of Krejčiřík and Lipovský:
\begin{theorem} \cite[Theorem 4.2]{KrLip}. \label{BoundednessOfEigenvalues} Let $\alpha \in \mathbb C$ be arbitrary and $a \in (0, \pi/2)$. Let $\lambda_j^+(a)$ denote the $j$-th eigenvalue in the upper half-plane sorted in non-decreasing order according to the imaginary part. Then
\[
\lambda_j^+(a) = \begin{cases}
    \mathrm{i}j + f_j(a), & \text{for } \alpha \neq \pm 2, \\
    \frac{\mathrm{ij\pi}}{\pi-a} + f_j(a), & \text{for } \alpha = \pm 2,
\end{cases}
\]
where
\begin{enumerate}
    \item $f_j$ are analytic in $a$ with at most algebraic singularities. If for certain $a_0$ a finite number of $\lambda_j(a_0)$ have the same imaginary parts, one may need to interchange their indices to get the analyticity.
    \item $|\Re f_j(a)| \leq c_1$, where $c_1 > 0$ is independent of $j$ and $a$.
    \item $|\Im f_j(a)| \leq c_2$, where $c_2 > 0$ is independent of $j$ and $a$.
\end{enumerate}
Analogous statement holds for the eigenvalues $\lambda_j^-(a)$ in the lower half-plane.    
\end{theorem}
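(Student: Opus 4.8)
The plan is to read the zero distribution of the characteristic function directly off the structure of $F(\lambda):=\lambda S(\lambda;a,\alpha)$, exploiting that, by Theorem \ref{CharacteristicFunction}, the eigenvalues are exactly its zeros and that $F$ is an \emph{exponential polynomial} with purely real frequencies. Expanding the hyperbolic functions as in the computation showing every root is at most double, one has
\[
4F(\lambda)=(2+\alpha)e^{\lambda\pi}+(\alpha-2)e^{-\lambda\pi}-\alpha e^{\lambda(\pi-2a)}-\alpha e^{\lambda(2a-\pi)},
\]
a sum of four terms $c_k e^{\lambda\omega_k}$ with frequencies $\omega_k\in\{\pi,-\pi,\pi-2a,2a-\pi\}$ satisfying, for $a\in(0,\pi/2)$, the ordering $-\pi<2a-\pi<0<\pi-2a<\pi$. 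The whole theorem is then a manifestation of the classical fact that the zeros of such a function are confined to a vertical strip and asymptotically equally spaced along the imaginary direction, with spacing $2\pi/(\omega_{\max}-\omega_{\min})$. The decisive observation is that for $\alpha=\pm2$ exactly one \emph{extreme} coefficient vanishes: the coefficient $2+\alpha$ of $e^{\lambda\pi}$ dies at $\alpha=-2$ and the coefficient $\alpha-2$ of $e^{-\lambda\pi}$ dies at $\alpha=2$, removing the corresponding outermost frequency $+\pi$ or $-\pi$. The effective frequency span thereby shrinks from $2\pi$ to $2(\pi-a)$, turning the spacing $1$ into $\pi/(\pi-a)$, which is precisely the dichotomy of the two cases.

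First I would confine the spectrum to a strip $|\Re\lambda|\le c_1$. As $\Re\lambda\to+\infty$ the term of largest present frequency dominates the remaining three in modulus—this is $e^{\lambda\pi}$ when $\alpha\neq-2$ and $e^{\lambda(\pi-2a)}$ when $\alpha=-2$—and symmetrically as $\Re\lambda\to-\infty$ the term of smallest present frequency dominates; in either direction a single exponential eventually beats the sum of the others, so $F$ cannot vanish there. Quantifying this domination yields a bound $c_1$ uniform in $j$, and uniform in $a$ as long as one stays away from the degeneracy $a\to\pi/2$, where $\pi-2a\to0$ and the inner frequencies collide.

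Next I would count and localize the zeros by the argument principle on the rectangles $R_T=[-c_1,c_1]\times[0,T]$ in the $(\Re\lambda,\Im\lambda)$–plane, with $T$ chosen to avoid zeros. On each vertical edge $F$ is dominated by a single exponential, so—traversing $\partial R_T$ positively—$\arg F$ increases by $\pi T+\bigo{}{}(1)$ along the right edge, and by $\pi T+\bigo{}{}(1)$ (if $\alpha\neq\pm2$) or $(\pi-2a)T+\bigo{}{}(1)$ (if $\alpha=\pm2$) along the left edge, while the two horizontal edges contribute $\bigo{}{}(1)$. Dividing the total increment by $2\pi$, the number of zeros in $R_T$ is $T+\bigo{}{}(1)$ when $\alpha\neq\pm2$ and $\frac{\pi-a}{\pi}T+\bigo{}{}(1)$ when $\alpha=\pm2$. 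Matching the $j$-th zero to the $j$-th lattice point $\mathrm{i}j$, respectively $\mathrm{i}j\pi/(\pi-a)$, and invoking the strip confinement, gives $\lambda_j^+(a)=(\text{leading term})+f_j(a)$ with $|\Re f_j|$ and $|\Im f_j|$ controlled by the $\bigo{}{}(1)$ constants, uniformly in $j$. The lower half-plane follows from the symmetry $F(-\lambda;a,\alpha)=-F(\lambda;a,-\alpha)$ underlying Proposition \ref{AdjointProposition}, together with the invariance of $\{\pm2\}$ under $\alpha\mapsto-\alpha$.

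Finally, for the dependence on $a$, I would note that $F(\lambda;a)$ is jointly entire in $(\lambda,a)$; at a simple zero the analytic implicit function theorem produces an analytic branch $a\mapsto\lambda_j(a)$, hence analytic $f_j$. Branch points occur exactly where two eigenvalues coalesce, that is, at the isolated zeros of the discriminant of $F$ in $\lambda$, where a Puiseux expansion yields at most algebraic singularities and a finite reindexing of the collided branches restores analyticity, as asserted. The main obstacle I anticipate is the \emph{uniformity of $c_1$ and $c_2$ in $a$ across the whole interval $(0,\pi/2)$}: both the strip width and the $\bigo{}{}(1)$ errors in the argument-principle count are governed by the gaps between the frequencies $\{\pi,-\pi,\pi-2a,2a-\pi\}$ and by the moduli of the coefficients, and these degenerate as $a\to\pi/2$ (inner frequencies merging) and, in the generic regime, as $\alpha\to\pm2$ (an extreme coefficient vanishing). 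Making the domination and counting estimates quantitatively uniform in $a$, and splicing the two spacing regimes together near these critical configurations, is the delicate heart of the argument.
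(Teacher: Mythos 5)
You should first be aware that the paper itself contains \emph{no proof} of this statement: it is imported verbatim from \cite[Section 4]{KrLip} as an external input, so there is no internal argument to compare against. Judged on its own, your skeleton is the standard and essentially correct route for \emph{fixed} $a$: the expansion $4F(\lambda)=(2+\alpha)\mathrm{e}^{\lambda\pi}+(\alpha-2)\mathrm{e}^{-\lambda\pi}-\alpha \mathrm{e}^{\lambda(\pi-2a)}-\alpha \mathrm{e}^{\lambda(2a-\pi)}$ is right, the extreme coefficient $2+\alpha$ (resp. $\alpha-2$) does die exactly at $\alpha=-2$ (resp. $\alpha=2$), the argument-principle count then gives densities $1$ and $(\pi-a)/\pi$ producing the two spacings $\mathrm{i}j$ and $\mathrm{i}j\pi/(\pi-a)$, the symmetry $F(-\lambda;a,\alpha)=-F(\lambda;a,-\alpha)$ correctly handles the lower half-plane, and the implicit-function/Puiseux argument (valid since roots are at most double) yields item (1). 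Two small slips: on the right edge in the case $\alpha=-2$ the dominant term is $\mathrm{e}^{\lambda(\pi-2a)}$, contributing $(\pi-2a)T$ rather than $\pi T$ (the total count is unchanged, the roles of the edges merely swap); and ``discriminant of $F$ in $\lambda$'' is loose, since $F$ is not a polynomial in $\lambda$ — one should instead argue that the coalescence set $\{F=\partial_\lambda F=0\}$ is locally finite in $a$ along each branch.

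The genuine gap is the uniformity in $a$, which you defer to the end and, more importantly, mislocate. You assert the strip bound is ``uniform in $a$ as long as one stays away from the degeneracy $a\to\pi/2$''. This is false: the degeneration at $a\to 0$ is worse, and it is a feature of the spectrum, not of the proof. Balancing the two left-most exponentials, $(\alpha-2)\mathrm{e}^{-\lambda\pi}\approx\alpha \mathrm{e}^{\lambda(2a-\pi)}$, i.e. $\mathrm{e}^{2a\lambda}\approx(\alpha-2)/\alpha$, produces (by Rouch\'e) a genuine vertical string of eigenvalues at $\Re\lambda\approx\frac{1}{2a}\ln\left|\frac{\alpha-2}{\alpha}\right|$, which is unbounded below as $a\to 0^+$ whenever $|\alpha-2|<|\alpha|$, i.e. $\Re\alpha>1$ (symmetrically, $\Re\alpha<-1$ yields a right-going string at $\frac{1}{2a}\ln\left|\frac{\alpha}{2+\alpha}\right|$). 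This is confirmed exactly in the rational case: for $a=\pi/q$, $p=1$, the polynomial \eqref{Polynomial} has a root $\zeta\approx\alpha/(\alpha-2)$ with $|\zeta|>1$ (e.g. $\zeta\approx 3$ for $\alpha=3$), so by \eqref{EigenvaluesUsingRoots} there are eigenvalues with real part $-\frac{q}{2\pi}\ln|\zeta|\to-\infty$ as $q\to\infty$. Consequently no single-exponential dominance argument can deliver a constant $c_1$ uniform over all of $(0,\pi/2)$ for such $\alpha$; what your plan can honestly produce is the theorem with constants uniform on compact subsets of $(0,\pi/2)$ (note that for $\alpha=\pm2$ the collapse of an \emph{extreme} frequency gap as $a\to\pi/2$ is likewise harmful, whereas for $\alpha\neq\pm2$ the inner-inner merging there is benign since the merged coefficient $-2\alpha$ survives, or if $\alpha = 0$ the inner terms are absent altogether). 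That locally uniform version happens to be all the paper actually needs — the continuity in $a$ of $\sum_\lambda\Re(1/\lambda)$ is a local statement — but as a proof of the statement as quoted, your dominance step fails near $a=0$, and the ``delicate heart'' you postpone is not merely delicate: the intermediate claim you would need is untrue there, so the escaping strings must be analysed separately or the claimed uniformity restricted.
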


Since 
\[
\left|\Re \frac{1}{\lambda_j^\pm(a)}\right| = \frac{\left|\Re \lambda_j^\pm(a)\right|}{|\lambda_j^\pm(a)|^2} \leq \frac{c_1}{|\lambda_j^\pm(a)|^2}
\]
and the last term is eventually dominated by $\varepsilon j^{-2}$ for some $\varepsilon >0$, the series converges uniformly in $a$. Consequently, the sum is continuous with respect to $a$ and the root vectors remain complete if and only if $\alpha \neq \pm 2$.

To admit any $a \in (0, \pi)$ we only need to realize that the trace in Proposition \ref{Trace}, Theorem \ref{SpectralTrace}, and \eqref{SpectralTraceGeneral} are symmetric under the exchange of $a$ and $\pi-a$. We have proven the following result

\begin{lemma}\label{L: Total condition}
    Let $a \in (0, \pi)$ and $\alpha \in \mathbb C$. Then the root vectors of $A(a, \alpha)$ are complete in $\mathcal H$ if and only if $\alpha \neq \pm 2$.
\end{lemma}

It is left to check the biorthogonality and the Bessel property. The situation here is significantly more involved than in the case of a rational placement. It turns out that a biorthogonal pairing with the Bessel property preserved becomes impossible for certain values of $\alpha$.

Recall that for an eigenvalue $\lambda \in \mathbb C$, the operator $A$ possesses the eigenvector
\[
    \psi_\lambda = \begin{pmatrix}
        u_\lambda \\ \lambda u_\lambda
    \end{pmatrix}, \quad u_\lambda(x) \coloneqq \begin{cases}
        \sinh(\lambda(\pi-a))\sinh(\lambda x), & \text{for } 0 \leq x \leq a, \\
        \sinh(\lambda a)\sinh(\lambda(\pi-x)), & \text{for } a \leq x \leq \pi.
    \end{cases}
    \]
Again, if $\lambda$ is a double root of $S(\cdot; a, \alpha)$, then there is also the generalized eigenvector
    \[
    \tilde \psi_\lambda = \begin{pmatrix}
        \tilde u_\lambda \\
        u_\lambda + \lambda \tilde u_\lambda
    \end{pmatrix},
    \]
    where we denoted
    \[
    \tilde u_\lambda(x) \coloneqq \begin{cases}
        x\sinh(\lambda(\pi-a))\cosh(\lambda x) + (\pi-a)\sinh(\lambda x), &\text{for } 0 \leq x \leq a, \\
        -x\cosh(\lambda a)\cosh(\lambda(\pi-x)) + a\cosh(\lambda a)\sinh(\lambda(\pi-x)),& \text{for } a \leq x \leq \pi. 
    \end{cases}
    \]
 Higher algebraic multiplicities are forbidden by Corollary \ref{C: Algebraic multiplicity}.
 Similarly, for the adjoint operator $A^\ast(a,\alpha) = -A(a, -\overline{\alpha})$ and its eigenvalue $\overline{\lambda}$, we get the eigenvector
 \[
 \phi_\lambda = \begin{pmatrix}
     \overline{u} \\ - \overline{\lambda u}
 \end{pmatrix}
 \]
 and potentially also the generalized eigenvector
 \[
 \tilde \phi_\lambda =\begin{pmatrix}
        \overline{\tilde u} \\ -\overline{\lambda\tilde u} - \overline{u}
    \end{pmatrix}.
 \]

Given $a \in (0, \pi)$ with $a \notin \pi\mathbb Q$, we define the set
\begin{equation}
\mathcal D_a \coloneqq
\left\{
(X,Y) \in (\mathbb C \setminus \{0\})^2
\,\middle|\,
\begin{array}{l}
|X|^{\pi-a} = |Y|^a,\\
aX(Y-1)^2 + (\pi-a)Y(X-1)^2 = 0,\\
(X,Y) \neq (1,1)
\end{array}
\right\}.
\label{eq: D_a definition}
\end{equation}
This set gives rise to the set of exceptional values of $\alpha$:
\begin{equation}
    \mathcal E_a \coloneqq \left\{-2\frac{XY-1}{(X-1)(Y-1)} \mid (X, Y) \in \mathcal D_a\right\}.
    \label{eq: Exceptional alpha}
\end{equation}

\begin{lemma}\label{L: Double eigenvalues are exceptional}
    Let $a \in (0, \pi)$ with $a \notin \pi \mathbb Q$ and $\alpha \in \mathbb C$. If $A(a, \alpha)$ has a double eigenvalue, then $\alpha \in \mathcal E_a$.
\end{lemma}
\begin{proof}
    Let us recall the notation 
    \[
    F(\lambda) = \lambda S(\lambda;a, \alpha) = \sinh(\lambda\pi) + \alpha\sinh(\lambda a)\sinh(\lambda(\pi-a))
    \]
    of Proposition \ref{P: Roots at most double}. If $\lambda_0 \in \mathbb C$ is a double eigenvalue of $A(a, \alpha)$, then
    \[
    F(\lambda_0) = F'(\lambda_0) = 0.
    \]
    Let us denote
    \[
    X_0 \coloneqq \mathrm{e}^{2a\lambda_0}, \quad Y_0 \coloneqq \mathrm{e}^{2(\pi-a)\lambda_0}.
    \]
    Then clearly $|X_0|^{\pi-a} = |Y_0|^a$. Note that $\sinh(\lambda_0 a)\sinh(\lambda_0 (\pi-a)) \neq 0$ by Lemma \ref{L: No harmonic modes}. This also justifies that $X_0, Y_0 \neq 1$.  Moreover, one has
    \begin{align*}
        -2\frac{X_0Y_0 - 1}{(X_0-1)(Y_0-1)} = - \frac{\sinh(\lambda_0\pi)}{\sinh(\lambda_0 a)\sinh(\lambda_0(\pi-a))} = \alpha
    \end{align*}
    due to the equation $F(\lambda_0) = 0$. Finally, a direct calculation from $F'(\lambda_0) = 0$ using the above expression for $\alpha$ yields
    \[
    aX_0(Y_0-1)^2 + (\pi-a)Y_0(X_0-1)^2 = 0.
    \]
    Therefore, $(X_0, Y_0) \in \mathcal D_a$ and $\alpha \in \mathcal E_a$.
\end{proof}


\begin{theorem}\label{T: Irrational Riesz criterion}
    Let $\alpha \in \mathbb C$ and $a \in (0,\pi)$ with $a \notin \pi \mathbb Q$. Let $\mathcal E_a$ be given by \eqref{eq: Exceptional alpha}. Then exactly one of the following items occurs:
    \begin{enumerate}
        \item If $\alpha = \pm 2$, the root vectors of $A(a,\alpha)$ are not complete in $\mathcal H$.
        \item If $\alpha \in \mathcal E_a$, then the root vectors of $A(a, \alpha)$ are complete in $\mathcal H$, but they do not form a Riesz basis.
        \item If $\alpha \notin \mathcal E_a \cup \{\pm 2\}$, then the root vectors of $A(a, \alpha)$ form a Riesz basis in $\mathcal H$.
    \end{enumerate}
\end{theorem}
\begin{proof}
    We will prove each item separately.
    \begin{enumerate}
        \item See Lemma \ref{L: Total condition}.
        \item We must first prove that $\pm 2 \notin \mathcal E_a$. Indeed, solving the system
        \[
        \alpha = -2\frac{XY - 1}{(X-1)(Y-1)}, \quad aX(Y-1)^2 + (\pi-a)Y(X-1)^2 = 0, \quad (X, Y) \neq (1,1)
        \]
        with $\alpha= \pm 2$ yields
        \[
        (X,Y) = \begin{cases}
            \left(\frac{\pi-2a}{2(\pi-a)}, -\frac{\pi-2a}{2a}\right), & \alpha = 2,\\
            \left(\frac{2(\pi-a)}{\pi-2a}, -\frac{2a}{\pi-2a}\right), & \alpha = -2.
        \end{cases}
        \]
    
        The modulus equation then becomes identical:
        \[
        \left(\frac{2(\pi-a)}{\pi-2a}\right)^{\pi-2a} = \left(\frac{2a}{\pi-2a}\right)^a.
        \]
        Without losing generality, we may assume that $a < \pi/2$ (otherwise we can exchange the roles of $a$ and $\pi - a$). Denoting $r \coloneqq (\pi-a)/a > 1$, we calculate
        \begin{align*}
            \ln\left(\frac{\left(\frac{2(\pi-a)}{\pi-2a}\right)^{\pi-2a}}{\left(\frac{2a}{\pi-2a}\right)^a}\right) = a\left(r\ln r - (r-1)\ln(r-1) + (r-1)\ln 2\right) > 0,
        \end{align*}
        because $r \mapsto r\ln r - (r-1)\ln(r-1)$ is strictly increasing for $r > 1$ and tends to $0$ as $r \searrow 1$. Therefore, we have found that 
        \[
        \left(\frac{2(\pi-a)}{\pi-2a}\right)^{\pi-2a} > \left(\frac{2a}{\pi-2a}\right)^a
        \]
        and consequently, $\alpha = \pm 2 \notin \mathcal E_a$.

        Lemma \ref{L: Total condition} again shows that the root vectors are complete.

        Now we will prove that $\alpha \in \mathcal E_a$ is indeed an obstruction for the Riesz basis. It suffices to show that the spectral projections 
        \[
        P_\lambda = - \frac{1}{2\pi \mathrm{i}}\int_\Gamma (A-zI)^{-1}\dd z
        \]
        are not uniformly bounded. Take $(X,Y) \in \mathcal D_a$ such that 
        \[
        \alpha = - 2\frac{XY-1}{(X-1)(Y-1)}.
        \]
        Then, by the definition \eqref{eq: D_a definition} of $D_a$, there exists $s \in \mathbb R$ and $\theta, \phi \in [0,2\pi)$ such that
        \[
        X = \mathrm{e}^{2as}\mathrm{e}^{\mathrm{i}\theta}, \quad Y = \mathrm{e}^{2(\pi-a)s}\mathrm{e}^{\mathrm{i}\phi}.
        \]
        Since $a/\pi \notin \mathbb Q$, it follows that $a/(\pi-a)$ is also irrational. Therefore, we can take a sequence $\{\xi_n\}_{n=1}^{\infty}$ with $\xi_n \to \infty$ such that
        \[
        \lim_{n\to\infty}\mathrm{e}^{2\mathrm{i}a\xi_n}  = \mathrm{e}^{\mathrm{i}\theta}, \quad \lim_{n\to \infty}\mathrm{e}^{2\mathrm{i}(\pi-a)\xi_n} = \mathrm{e}^{\mathrm{i}\phi}.
        \]
        Defining $\mu_n \coloneqq s + \mathrm{i}\xi_n$, we can see that
        \[
        \lim_{n\to\infty}\mathrm{e}^{2a\mu_n} = X, \quad \lim_{n\to\infty}\mathrm{e}^{2(\pi-a)\mu_n} = Y.
        \]

        Let us now multiply the characteristic function by a suitable factor and rewrite it similarly to the case of a rational placement:
        \[
        G(\lambda) \coloneqq 4\mathrm{e}^{\pi\lambda}F(\lambda) = (2+\alpha)\mathrm{e}^{2\pi\lambda} - \alpha \mathrm{e}^{2a\lambda} - \alpha \mathrm{e}^{2(\pi-a)\lambda} + \alpha - 2 
        \]
        Consider now the translated functions
        \[
        G_n(\lambda) \coloneqq G(\lambda + \mu_n).
        \]
        It is a simple matter to show that these functions converge locally uniformly to the function
        \[
        G_\infty(\lambda) \coloneqq (2+\alpha)XY\mathrm{e}^{2\pi\lambda} - \alpha X\mathrm{e}^{2a\lambda} - \alpha Y\mathrm{e}^{2(\pi-a)\lambda} + \alpha - 2.
        \]
        The definition of $D_a$ now ensures that $0$ is a double root of $G_\infty$.

        As a consequence of Rouché's theorem, $G_n$ has precisely two roots $\eta_n^{(1)}$, $\eta_n^{(2)}$ in a sufficiently small neighbourhood of $0$. These roots must satisfy
        \[
        \lim_{n\to\infty}\eta_n^{(j)} = 0, \quad j \in \{1, 2\}.
        \]
        Consequently, if we define
        \[
        \lambda_n^{(j)} \coloneqq \mu_n + \eta_n^{(j)}, \quad j \in \{1, 2\}, \quad n \in \mathbb N,
        \]
        we obtain two roots of $G$; that is, two eigenvalues of $A(a, \alpha)$. By construction, these eigenvalues satisfy
        \[
        \lambda_n^{(j)} = s + \mathrm{i}\xi_n + o(1), \quad n\to \infty, \quad j \in \{1, 2\}.
        \]

        Next, we shall prove that the eigenvalues constructed above are eventually distinct and simple. To this end, it suffices to show that with $a \notin \pi\mathbb Q$, there are only finitely many double eigenvalues. Note that $0 \notin \mathcal E_a$, so we can assume that $\alpha \neq 0$. Given any eigenvalue $\lambda$, we can define
        \[
        X_\lambda \coloneqq \mathrm{e}^{2a\lambda}, \quad Y_\lambda \coloneqq \mathrm{e}^{2(\pi-a)\lambda}.
        \]
        Equations $G(\lambda) = G'(\lambda) = 0$ then produce the quadratic equation
        \[
        (\pi-a)\alpha(\alpha + 2)X_\lambda^2 + (4\pi - 2(\pi-a)\alpha^2)X_\lambda + (\pi-a)\alpha(\alpha-2) = 0
        \]
        with $Y_\lambda$ determined by
        \[
        Y_\lambda = \frac{\pi(\alpha-2) - (\pi-a)\alpha X_\lambda}{a\alpha}.
        \]
        Therefore, there are at most two such pairs $(X_\lambda, Y_\lambda)$. That these correspond to at most two double eigenvalues $\lambda$ follows from the injectivity of the mapping $\lambda \mapsto (\mathrm{e}^{2a\lambda}, \mathrm{e}^{2(\pi-a)\lambda})$ provided by $a \notin \pi\mathbb Q$.

        Let us denote
        \[
        \psi_n^{(j)} \coloneqq \psi_{\lambda_n^{(j)}}, \quad \phi_n^{(j)} \coloneqq \phi_{\lambda_n^{(j)}}, \quad n \in \mathbb N, \quad j \in \{1, 2\}.
        \]
        By the previous paragraph, we can assume without loss of generality that the spectral projections corresponding to $\lambda_n^{(j)}$ are of the form
        \[
        P_n^{(j)} \coloneqq \frac{\inner{\phi_n^{(j)}}{\cdot}}{\inner{\phi_n^{(j)}}{\psi_n^{(j)}}}\psi_n^{(j)}.
        \]
        Their norm can then be found simply as
        \[
        \norm{P_n^{(j)}} = \frac{\norm{\phi_n^{(j)}} \norm{\psi_n^{(j)}}}{\left|\inner{\phi_n^{(j)}}{\psi_n^{(j)}}\right|}.
        \]
        A direct calculation yields
        \begin{equation}
        d_\lambda \coloneqq \frac{1}{|\lambda|^2}\inner{\phi_\lambda}{\psi_\lambda} = a\sinh^2(\lambda(\pi-a)) + (\pi-a)\sinh^2(\lambda a).
        \label{eq: d_lambda}
        \end{equation}
        Along our sequences, using $\mathrm{e}^{2a\lambda_n^{(j)}} \to X$ and $\mathrm{e}^{2(\pi-a)\lambda_n^{(j)}} \to Y$, we get
        \begin{equation}
        \lim_{n\to \infty}d_{\lambda_n^{(j)}} = a\frac{(X-1)^2}{4X} + (\pi-a)\frac{(Y-1)^2}{4Y} = 0
        \label{d_lambda_n}
        \end{equation}
        on account of $(X, Y) \in \mathcal D_a$ and \eqref{eq: D_a definition}.
        Similarly, we can compute
        \begin{equation}
        \norm{\phi_\lambda}^2 = \norm{\psi_\lambda}^2 = \frac{|\lambda|^2}{2}\left(|\sinh(\lambda(\pi-a))|^2H_{\Re\lambda}(a) + |\sinh(\lambda a)|^2H_{\Re \lambda}(\pi-a)\right),
        \label{eq: Eigenfunction norm}
        \end{equation}
        where we define
        \begin{equation}
        H_\omega(t) \coloneqq \begin{cases}
            \frac{\sinh(2\omega t)}{\omega}, & \omega \neq 0, \\
            2t, & \omega = 0.
        \end{cases}
        \label{eq: Definition of H}
        \end{equation}
        Since $\Re \lambda_n^{(j)} \to r$, we acquire
        \[
        \lim_{n\to \infty}\frac{\norm{\psi_n^{(j)}}^2}{|\lambda_n^{(j)}|^2} = \frac{1}{8}\left(\frac{|Y-1|^2}{|Y|}H_s(a) + \frac{|X-1|^2}{|X|}H_s(\pi-a)\right) > 0.
        \]
        It follows that $\norm{P_n^{(j)}} \to +\infty$.

        \item It is left to show that with $\alpha \in \mathbb C \setminus (\mathcal E_a \cup \{\pm 2\})$, there exists a normalization of the eigenvectors $\psi_\lambda$ and $\phi_\lambda$ that gives both biorthogonality and the Bessel property. Note that we need not consider generalized eigenvectors thanks to Lemma \ref{L: Double eigenvalues are exceptional}. In fact, we only need to show that there is a normalization by constants $b_\lambda$, $c_\lambda$ satisfying the Bessel property such that the pairing constants $\kappa_\lambda \coloneqq \inner{b_\lambda\phi_\lambda}{c_\lambda\psi_\lambda}$ satisfy
        \[
        \inf_{\lambda\in\sigma(A)}|\kappa_\lambda| > 0.
        \]
        If that is the case, then
        \[
        \left\{\frac{b_\lambda}{\kappa_\lambda}\phi_\lambda\right\}_{\lambda \in \sigma(A)}, \quad \left\{c_\lambda\psi_\lambda\right\}_{\lambda\in\sigma(A)}
        \]
        are the desired biorthogonal Bessel and complete sequences, compliments by Lemma \ref{L: Total condition}. We claim that
        \[
        b_\lambda = c_\lambda = \frac{1}{\norm{\psi_\lambda}} = \frac{1}{\norm{\phi_\lambda}}
        \]
        is such normalization.

        Let us start by proving the boundedness of $\kappa_\lambda$. Assume now for a contradiction that there is a sequence $\{\lambda_n\}_{n=1}^\infty$ such that $\kappa_{\lambda_n} \to 0$. Notice that the transformed characteristic equation
        \[
        G(\lambda) = (2+\alpha)\mathrm{e}^{2\lambda\pi} - \alpha\mathrm{e}^{2\lambda a} - \alpha \mathrm{e}^{2\lambda(\pi-a)} + \alpha - 2 = 0
        \]
        forces
        \begin{equation}
        -C \leq \Re \lambda \leq C, \quad C \equiv C(a, \alpha) > 0, \quad \alpha \neq \pm 2.
        \label{eq: Real part restriction}
        \end{equation}
        Therefore, we can choose a subsequence $\{\lambda'_n\}_{n=1}^\infty$ such that both the real part and the phases converge:
        \[
        \lim_{n\to\infty}\mathrm{e}^{2a\lambda_n'} \eqqcolon X, \quad \lim_{n\to\infty}\mathrm{e}^{2(\pi-a)\lambda_n'} \eqqcolon Y.
        \]
        It clearly holds that 
        \begin{equation}
            |X|^{\pi-a} = |Y|^a.
            \label{eq: Modulus condition}
        \end{equation}
        Similarly to \eqref{eq: d_lambda} and \eqref{d_lambda_n}, we get
        \begin{equation}
        a\frac{(X-1)^2}{4X} + (\pi-a)\frac{(Y-1)^2}{4Y} = 0.
        \label{eq: Equation for X, Y}
        \end{equation}
        At the same time, the characteristic equation yields
        \begin{equation}
        \alpha = -2\frac{(XY-1)}{(X-1)(Y-1)}.
        \label{eq: Equation for X, Y, alpha}
        \end{equation}
        as long as we show that indeed $X, Y \neq 1$. By \eqref{eq: Equation for X, Y}, the only possible way for this to happen is $X = Y = 1$. We have already found in \eqref{eq: d_lambda} and \eqref{eq: Eigenfunction norm} that
        \[
        \kappa_\lambda = \frac{|\inner{\phi_\lambda}{\psi_\lambda}|}{\norm{\phi_\lambda}^2} = 2\frac{a\sinh^2(\lambda(\pi-a)) + (\pi-a)\sinh^2(\lambda a)}{|\sinh(\lambda(\pi-a))|^2H_{\Re\lambda}(a) + |\sinh(\lambda a)|^2H_{\Re \lambda}(\pi-a)},
        \]
        where $H_\omega(t)$ is defined by \eqref{eq: Definition of H}. Therefore, if we set
        \[
        \zeta_n \coloneqq - \frac{\sinh(\lambda_n'a)}{\sinh(\lambda_n'(\pi-a))},
        \]
        we can rewrite
        \begin{equation}
        |\kappa_{\lambda_n'}| = 2\frac{|a + (\pi-a)\zeta_n^2|}{H_{\Re \lambda_n'}(a) + |\zeta_n|^2H_{\Re \lambda_n'}(\pi-a)}.
        \label{eq: Formula for kappa}
        \end{equation}
        If $X = Y = 1$, we have
        \[
        a\lambda_n' = \mathrm{i}\pi m_n + z_n, \quad (\pi-a)\lambda_n' = i\pi k_n + w_n, \quad m_n, k_n \in \mathbb Z, \quad z_n, w_n \to 0.
        \]
        Due to the periodicity, the characteristic function can be written as
        \[
        \sinh(z_n + w_n) + \alpha \sinh z_n\sinh w_n = 0.
        \]
        It follows that $w_n = -z_n + \mathcal O(\max\{|z_n|^2, |w_n|^2\})$ as $n \to \infty$, and hence $\zeta_n^2 \to 1$. Since we also have $\Re \lambda_n' \to 0$, formula \eqref{eq: Formula for kappa} yields
        \[
        \kappa_{\lambda_n'} \to \frac{2|a + (\pi-a)|}{2a + 2(\pi-a)} = 1,
        \]
        which contradicts the assumption $\kappa_{\lambda_n'} \to 0$. Therefore, it must hold that $X, Y \neq 1$. Together with  \eqref{eq: Modulus condition}, \eqref{eq: Equation for X, Y} and \eqref{eq: Equation for X, Y, alpha}, this means that $(X, Y) \in \mathcal D_a$ and $\alpha \in \mathcal E_a$ -- a contradiction with the assumption.

        Finally, we turn our attention to the Bessel property. We first introduce the isometry
        \[
        U:\mathcal H \to L^2((0,\pi), \mathbb C^2) : \begin{pmatrix}
            u \\ v
        \end{pmatrix} \mapsto \frac{1}{\sqrt{2}}\begin{pmatrix}
            v + u' \\
            v - u'
        \end{pmatrix}.
        \]
        It is easily computed that under this transformation, the eigenvectors become
        \[
        U\psi_\lambda \eqqcolon \lambda \begin{pmatrix} p_\lambda \\ q_\lambda\end{pmatrix},
        \]
        where
        \begin{equation}
        (p_\lambda(x), q_\lambda(x)) = \frac{1}{\sqrt{2}}\begin{cases}
            \sinh(\lambda(\pi-a))(\mathrm{e}^{\lambda x}, -\mathrm{e}^{-\lambda x}), & 0 \leq x \leq a,\\
            -\sinh(\lambda a)(\mathrm{e}^{\lambda(x-\pi)}, - \mathrm{e}^{-\lambda(x-\pi)}), & a \leq x \leq \pi.
        \end{cases}
        \label{eq: Transformed Eigenvector}
        \end{equation}
        By \eqref{eq: Eigenfunction norm}, we have $\norm{\psi_\lambda} = |A_\lambda|^2H_{\Re \lambda}(a) + |B_\lambda|^2H_{\Re\lambda}(\pi-a)$, where we define
        \[
        A_\lambda \coloneqq \frac{\lambda}{\sqrt{2}}\sinh(\lambda (\pi-a)), \quad B_\lambda \coloneqq \frac{\lambda}{\sqrt{2}}\sinh(\lambda a),
        \]
        and the functions $\lambda \mapsto H_{\Re \lambda (a)}$ and $\lambda \mapsto H_{\Re \lambda}(\pi -a)$ are bounded uniformly from below due to the restriction \eqref{eq: Real part restriction}. Therefore, there is a constant $K$ such that
        \[
        \frac{| A_\lambda|}{\norm{\psi_\lambda}} < K, \quad \frac{|B_\lambda|}{\norm{\psi_\lambda}} \leq K.
        \]
        In other words, every transformed normalized eigenvector consists of a combination of the four exponentials \eqref{eq: Transformed Eigenvector} with uniformly bounded coefficients. It is thus sufficient to prove that $\{\mathrm{e}^{\pm\lambda x}\}_{\lambda \in \sigma(A)}$ are Bessel in both $L^2(0, a)$ and $L^2(a, \pi)$. That is essentially the statement of the following auxiliary lemma:

        \begin{lemma}
            Let $a, b \in \mathbb R$, $a<b$, and let $\seq{\lambda} \subset \mathbb C$ satisfy
            \[
            \sup_n|\Re \lambda_n| < \infty \quad \text{and} \quad \sup_{s \in \mathbb R} \left|\{n \mid \Im \lambda_n \in [s, s+1] \}\right| < +\infty.
            \]
            Then $\{\mathrm{e}^{\lambda_n x}\}_{n=1}^\infty$ is a Bessel sequence in $L^2(a, b)$.
        \end{lemma}
        \begin{proof}
            Let us denote $I \coloneqq (-L, L)$, where $L > 0$ is large enough to ensure $(a, b) \subset I$. For $f \in L^2(-L, L)$, define
            \[
            F(k) \coloneqq \int_{-L}^L f(x) \mathrm{e}^{-\mathrm{i}kx}\dd x.
            \]
            Then $F$ is of exponential type at most $L$ and
            \[ 
            \norm{F}_{L^2(\mathbb R)} = 2\pi \norm{f}_{L^2(-L,L)}
            \]
            by Plancherel's theorem.
            Let us put $\zeta_n \coloneqq \mathrm{i}\lambda_n$. This ensures that
            \[
            \Re \zeta_n = -\Im\lambda_n, \quad \Im \zeta_n = \Re \lambda_n.
            \]
            By the remark succeeding \cite[Theorem 2.3.17]{Young}, it follows that
            \[
            \sum_{n=1}^\infty |F(\zeta_n)|^2 \leq c\norm{F}_{L^2(\mathbb R)}.
            \]
            Then we can readily calculate
            \begin{align*}
                \sum_{n=1}^\infty\left|\inner{\mathrm{e}^{\lambda_n\cdot}}{f}\right|^2 &= \sum_{n=1}^\infty \left|\int_{-L}^Lf(x)\mathrm{e}^{\lambda_n x}\dd x\right|^2 = \sum_{n=1}^\infty\left|F(\zeta_n)\right|^2 \leq c \norm{F}_{L^2(\mathbb R)} \\&= 2\pi c\norm{f}_{L^2(-L,L)}.
            \end{align*}
            This precisely means that the sequence of exponentials is Bessel in $L^2(-L, L)$ and thus also in $L^2(a, b)$ by standard arguments.
        \end{proof}
        The last ingredient missing is to prove that $\sigma(A)$ satisfies the condition 
        \[
        \sup_{s\in\mathbb R}\left|\{\lambda \mid \Im \lambda \in [s, s+1] \}\right| < +\infty.
        \]
        This, however is a direct consequence of Theorem \ref{BoundednessOfEigenvalues}.

        The fact that also $\phi_\lambda$ are Bessel after normalization follows from the same arguments applied to the transformation
        \[
        U\phi_\lambda = \overline{\lambda}\begin{pmatrix}
            -\overline{q_\lambda} \\ -\overline{p_\lambda}
        \end{pmatrix}.
        \]
        This concludes the proof. \qedhere
 \end{enumerate}
\end{proof}

The proof of Theorem \ref{Criterion} is hence complete. Let us also highlight the following result from the proof of Theorem \ref{T: Irrational Riesz criterion}:
\begin{corollary}
    Let $a \in (0, \pi)$ with $a \notin \pi\mathbb Q$ and $\alpha \in \mathbb C$. Then $A(a, \alpha)$ has at most two algebraically double eigenvalues.
\end{corollary}

\section{Compact star graph}\label{CompactStarGraph}
\subsection{Wave equation on a compact star graph}
Following the footsteps and the notation of Krejčiřík and Royer in \cite{KrRoy}, we consider a metric graph $\Gamma$ consisting of $n$ copies of the compact interval $[0, \pi]$ connected at the central vertex. We set the lengths equal to each other to avoid introducing too many new constants to the model. For $n = 2$ this will be, up to scaling, equivalent to the case $a= \pi/2$ of the problem considered previously. 


To properly construct the corresponding Hilbert space setting, we first set
\[
L^2(\Gamma) \coloneqq \left(L^2(0, \pi)\right)^n
\]
with the standard inner product extension to a Cartesian product of Hilbert spaces. Next, let
\[
H^k(\Gamma^\ast) \coloneqq \left(H^k(0, \pi)\right)^n, \quad \dot H_0^k(\Gamma^\ast) \coloneqq \left\{u \in H^k(\Gamma^\ast) \mid (\forall j \in \{1, \dots, n\})(u_j(\pi) = 0)\right\}.
\]
The equality $u_j(\pi) = 0$ is to be understood in the sense of the absolutely continuous representative of the equivalence class $u_j \in H^k(0, \pi)$.

For $u \in H^1(\Gamma^\ast)$, we denote $u_j$ its components with $j \in \{1, \dots, n\}$. $u$ is said to be continuous at $0$ if there exist continuous representatives $u_j$ of the equivalence classes of each component such that $\left(\forall j, k \in \{1, \dots, n\}\right)\left(u_j(0) = u_k(0)\right)$. The common value is denoted as $u(0)$.

To obtain the continuity condition at the vertex in accordance with the previous model, we set
\[
\dot H_0^1(\Gamma) \coloneqq \left\{u \in \dot H_0^1(\Gamma^\ast) \mid u \text{ is continuous at } 0\right\}.
\]
Finally, we will work with the Hilbert space
\[
\mathcal H \coloneqq \dot H_0^1(\Gamma) \times L^2(\Gamma).
\]
The wave operator with Dirac damping at the central vertex of $\Gamma$ is then defined as follows:
\begin{gather*}
\Dom A_n(\alpha) \coloneqq \left\{\psi = \begin{pmatrix}
    u \\ v
\end{pmatrix} \in \left(\dot H_0^1(\Gamma) \cap H^2(\Gamma^\ast)\right) \times \dot H_0^1(\Gamma) \mid \sum_{j=1}^n u'_j(0) = \alpha v(0) \right\}, \\
A_n(\alpha)\begin{pmatrix}
    u \\ v
\end{pmatrix} \coloneqq \begin{pmatrix}
    v \\ u''
\end{pmatrix},
\end{gather*}
where the derivatives are understood by components.

\subsection{The characteristic function}
Computing the resolvent, one can, analogously to Section \ref{SecResolvent}, derive the characteristic function. Solving $u'' - \lambda(\lambda + \alpha\delta)u = 0$ with $u_j(\pi) = 0$, $u_j'(\pi) = -1$ for all $j \in \{1, \dots, n-1\}$, we find
\[
u_n(x) = \frac{1}{\lambda}\left(\alpha\sinh(\lambda\pi) + (n-1) \cosh(\lambda\pi)\right)\sinh(\lambda x) + \frac{1}{\lambda}\sinh(\lambda\pi)\cosh(\lambda x).
\]

At $x = \pi$, we have
\begin{equation}
S_n(\lambda;\alpha) \coloneqq u_n(\pi)
= \frac{\sinh(\lambda\pi)}{\lambda}\left(n\cosh(\lambda\pi) + \alpha \sinh(\lambda\pi)\right).    \label{GraphCharacteristicFunction}
\end{equation}
It can be easily seen that for $n=2$ and after scaling, this is consistent with \eqref{DefinitionOfS} for the central damping $a = \pi/2$.

Since $A_n^{-1}$ is again compact, $A_n$ has purely discrete spectrum. We invoke Proposition \ref{IndexPole} to state the following analogy of Theorem \ref{CharacteristicFunction}. 

\begin{proposition}\label{GraphSProposition}
    $\lambda \in \mathbb C$ is an eigenvalue of $A_n(\alpha)$  if and only if it is a root of the entire function \eqref{GraphCharacteristicFunction}. Furthermore, the index of $\lambda \in \sigma_p(A_n(\alpha))$ is precisely its root multiplicity.
\end{proposition}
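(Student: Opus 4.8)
The plan is to follow the blueprint used for Theorem~\ref{CharacteristicFunction}. Since $A_n(\alpha)$ has a compact inverse, Proposition~\ref{IndexPole} applies without change: the index $\iota(\lambda)$ of any $\lambda\in\mathbb C$ equals the order of $\lambda$ as a pole of the resolvent $(A_n(\alpha)-zI)^{-1}$. It therefore suffices to construct this resolvent explicitly and to identify its poles, together with their orders, as the zeros of $S_n(\cdot\,;\alpha)$ with their root multiplicities. The ``eigenvalue if and only if root'' assertion is then automatic, because the poles of the resolvent of an operator with compact inverse are exactly its eigenvalues.

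First I would reconstruct the resolvent along the lines of Section~\ref{SecResolvent}. Writing $(A_n(\alpha)-\lambda I)(u,v)^{T}=(f,g)^{T}$ yields $v=\lambda u+f$ componentwise together with the edgewise Sturm--Liouville equation $u_j''-\lambda^2 u_j=\lambda f_j+g_j$, subject to the Dirichlet conditions $u_j(\pi)=0$, the continuity $u_j(0)=u_k(0)$ at the central vertex, and the coupling $\sum_{j=1}^n u_j'(0)=\alpha v(0)$, which becomes an inhomogeneous vertex condition once $v=\lambda u+f$ is inserted. I would solve this by a graph Green function assembled from the edge solutions vanishing at the leaves, glued to the shooting solution $u_n$ computed just above: the scalar connection coefficient that played the role of $u_1(\pi)$ in the single-interval case is now precisely $u_n(\pi)=S_n(\lambda;\alpha)$.

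The decisive step is then purely analytic. The resulting (matrix-valued) Green function is a ratio whose sole $\lambda$-singularity sits in the denominator, namely the Wronskian-type determinant of the leaf/vertex linear system, which equals $S_n(\lambda;\alpha)$ up to a nowhere-vanishing entire factor; the numerators are entire in $\lambda$ and do not vanish identically in the space variables. Hence the order of the pole of $(A_n(\alpha)-zI)^{-1}$ at any $\lambda_0$ coincides with the order of the zero of $S_n$ at $\lambda_0$, and Proposition~\ref{IndexPole} delivers the claim.

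The main obstacle, and the reason the statement is phrased through the index rather than the algebraic multiplicity as in Theorem~\ref{CharacteristicFunction}, is that eigenvalues on the star graph need not be geometrically simple. At the zeros contributed by the factor $\sinh(\lambda\pi)$, namely $\lambda=\mathrm{i}k$ with $k\in\mathbb Z\setminus\{0\}$, the eigenspace is $(n-1)$-dimensional, being spanned by modes supported antisymmetrically across pairs of edges, so that the residue of the resolvent has rank $n-1$ while the pole remains simple. The care required is to ensure that such a high-rank residue does not depress the pole order below the root multiplicity. This becomes transparent once one checks that the two factors $\sinh(\lambda\pi)$ and $n\cosh(\lambda\pi)+\alpha\sinh(\lambda\pi)$ of $S_n$ never vanish simultaneously (a common zero would force $\cosh(\lambda\pi)=0$ at a point where $\cosh(\lambda\pi)=\pm1$) and that neither admits a multiple zero: for $\alpha\neq\pm n$ a double zero of the second factor would force $\cosh(\lambda\pi)=\sinh(\lambda\pi)=0$, while for $\alpha=\pm n$ this factor equals $n\,\mathrm e^{\pm\lambda\pi}$ and has no zeros at all. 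Consequently every root of $S_n$ is simple, every eigenvalue is semisimple with index $1$, and the order-matching is immediate.
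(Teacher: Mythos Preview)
Your proposal is correct and follows the same route as the paper, which merely states the proposition as the analogue of Theorem~\ref{CharacteristicFunction} obtained by invoking Proposition~\ref{IndexPole} and the Green-function construction preceding the statement. You go further than the paper by explicitly verifying that every root of $S_n$ is simple and by discussing the $(n-1)$-dimensional eigenspaces at the harmonic eigenvalues; the one phrasing slip is that a high-rank residue cannot ``depress'' a pole order---the genuine concern is cancellation between numerator and $S_n$ in the Green function, and your simplicity check disposes of it.
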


The situation here is significantly easier since we abandoned the parameter $a$. We simply have
\begin{align}
    \lambda S_n(\lambda, \alpha) = \frac{\mathrm{e}^{2\lambda\pi}}{4}\left(n(1-e^{-4\lambda\pi}) + \alpha(1-2\mathrm{e}^{-2\lambda\pi} + \mathrm{e}^{-4\lambda\pi})\right) = -\frac{\mathrm{e}^{2 \lambda\pi}}{4}P_{n,\alpha}\left(\mathrm{e}^{-2\lambda\pi}\right),
\end{align}
where we define the polynomial $P_{n,\alpha}$ as
\begin{equation} \label{GraphPolynomial}
    P_{n,\alpha}(z) = (n-\alpha)z^2 + 2\alpha z -(n + \alpha).
\end{equation}
Here we can observe for the first time why $\alpha = \pm n$ are somewhat special values.

Supposing $\alpha \neq n$, the roots are
\[
\zeta_1 = 1,\quad \zeta_2 = \frac{\alpha + n}{\alpha - n},
\] with the corresponding eigenvalues
\begin{align}
    \lambda_{1,k} &= \mathrm{i}k, \quad k \in \mathbb Z \setminus\{0\}, \nonumber \\
    \lambda_{2, k} &= -\frac{1}{2\pi}\left(\ln\left|\frac{\alpha +n}{\alpha-n}\right| + \mathrm{i}(\theta + 2\pi k)\right), \quad k \in \mathbb Z,
    \label{GraphEigenvalues}
\end{align}
now supposing also $\alpha \neq -n$ and denoting $\theta$ the argument of $\zeta_2$. With the help of Proposition \ref{GraphSProposition}, it is easy to verify that $\lambda_{1,k}$ are of algebraic (and geometric) multiplicity $n-1$ while $\lambda_{2,k}$ are algebraically (and geometrically) simple.

\subsection{Eigenvectors}
By Proposition \ref{GraphSProposition} it is clear that no eigenvalue has index greater than $1$. Therefore, all root vectors are in fact eigenvectors. We will once again use Theorem \ref{RieszCriterion} to show in which cases the eigenvectors of $A_n(\alpha)$ form a Riesz basis in $\mathcal H$. 

As one can show analogously to Proposition \ref{AdjointProposition} that $A_n^\ast(\alpha) = -A_n\left(-\overline\alpha\right)$, the biorthogonal sequence is provided by the eigenvectors of $A_n^\ast(\alpha)$ similarly as in the prequel.

Solving the eigenvalue problem $A_n(\alpha) \psi = \lambda \psi$, we obtain eigenfunctions of the form
\[
\psi = \begin{pmatrix}
    u \\ \lambda u
\end{pmatrix}, \quad u_j(x) \coloneqq \sinh(\lambda(\pi-x)), \,\, j \in \{1, \dots, n\}. 
\]

To show the Bessel property, one can again proceed similarly to Cox and Henrot in \cite{CH} and use the explicit form of the eigenvalues \eqref{GraphEigenvalues}. Details are omitted. 

Perhaps more interesting is the Livšic criterion, which we apply to $A_n^{-1}$. First, we will find the trace of the real part.

\begin{proposition}\label{GraphTrace}
Let $\alpha \in \mathbb C$, $n \in \mathbb N$. Then $\Tr\Re A_n^{-1}(\alpha) = -\pi\frac{\Re\alpha}{n}$.
\end{proposition}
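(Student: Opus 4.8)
The plan is to exploit the fact that, exactly as in the single-interval model, $\Re A_n^{-1}(\alpha)$ is a self-adjoint \emph{rank-one} operator, so that its trace is its single nonzero eigenvalue and no summation over a basis is really needed. First I would record the quadratic-form identity obtained by integrating by parts on each edge: for $\psi = (u,v) \in \Dom A_n(\alpha)$ the boundary contributions at the free endpoints vanish because $v_j(\pi) = 0$, and collecting the terms at the central vertex with the help of the coupling condition $\sum_{j=1}^n u_j'(0) = \alpha v(0)$ and the continuity $v_j(0) = v(0)$ yields
\[
\Re\inner{\psi}{A_n(\alpha)\psi} = -\Re\alpha\,\abs{v(0)}^2 .
\]
This is the star-graph analogue of the dissipativity computation for the single interval, and it is also the identity underlying the relation $A_n^\ast(\alpha) = -A_n(-\overline\alpha)$ quoted above.

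Next I would transport this identity to the inverse. Since $A_n(u,v) = (v,u'')$, the equation $A_n(u,v) = (f,g)$ forces $v = f$; hence the second component of $A_n^{-1}(f,g)$ is exactly $f$, and its value at the vertex is $f(0)$. Using that $\inner{(\Re A_n^{-1})e}{e} = \Re\inner{A_n^{-1}e}{e}$ for every $e \in \mathcal H$, and putting $\psi = A_n^{-1}e$ with $e=(f,g)$, so that $\inner{A_n^{-1}e}{e} = \inner{\psi}{A_n\psi}$, the identity above gives
\[
\inner{(\Re A_n^{-1})e}{e} = -\Re\alpha\,\abs{f(0)}^2 .
\]
By polarization this identifies $\Re A_n^{-1}$ as the rank-one operator $-\Re\alpha\,\inner{\cdot}{\chi}\chi$, where $\chi \in \mathcal H$ is the Riesz representative of the bounded functional $e = (f,g) \mapsto f(0)$ (bounded by the one-dimensional Sobolev embedding). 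Consequently $\Tr\Re A_n^{-1} = -\Re\alpha\,\norm{\chi}^2$, and the proposition reduces to computing $\norm{\chi}^2$.

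The only genuine computation, and the step where the constant $n$ enters, is the identification of $\chi$. Because the functional depends solely on the $\dot H_0^1(\Gamma)$-component, I would seek $\chi = (\chi^{(1)},0)$ with $\chi^{(1)} \in \dot H_0^1(\Gamma)$ satisfying $\inner{f}{\chi^{(1)}}_{\dot H_0^1(\Gamma)} = f(0)$ for all $f$. The natural candidate has affine components $\chi^{(1)}_j(x) = \tfrac1n(\pi-x)$: these vanish at $x=\pi$, share the common vertex value $\pi/n$ (so continuity at $0$ holds), and a one-line integration by parts confirms $\sum_{j=1}^n \int_0^\pi f_j'\,\overline{(\chi^{(1)}_j)'}\dd x = \tfrac1n\sum_{j=1}^n f_j(0) = f(0)$. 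I do not expect a serious obstacle here; the conceptual heart is simply that the factor $1/n$ is \emph{forced} by the continuity constraint coupling all $n$ edges at the vertex, which is precisely the mechanism producing the $n$-dependence of the trace. Finally
\[
\norm{\chi}^2 = \sum_{j=1}^n \int_0^\pi \abs{(\chi^{(1)}_j)'}^2 \dd x = n\cdot\frac{1}{n^2}\cdot\pi = \frac{\pi}{n},
\]
whence $\Tr\Re A_n^{-1}(\alpha) = -\pi\,\Re\alpha/n$, as claimed.
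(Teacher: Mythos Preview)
Your proof is correct, and it takes a cleaner route than the paper's. The paper proceeds by brute force: it writes down the explicit solution $u_j$ of $A_n(\alpha)(u,v)=(f,g)$ and of the adjoint equation, adds the two formulas to obtain $\Re A_n^{-1}(f,g)=\tfrac{\Re\alpha}{n}\,f(0)\,(\rho,0)$ with $\rho_j(x)=x-\pi$, and then evaluates the trace on the normalized spanning vector $(\rho,0)/\sqrt{n\pi}$. Your argument bypasses the explicit inversion entirely by transporting the dissipativity identity $\Re\inner{\psi}{A_n\psi}=-\Re\alpha\,|v(0)|^2$ through the substitution $\psi=A_n^{-1}e$, and then identifies the rank-one operator via the Riesz representative $\chi$ of the vertex-evaluation functional. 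The two computations meet at the same place---your $\chi^{(1)}$ is $-\tfrac{1}{n}\rho$, so the ranges coincide---but your route makes transparent \emph{why} $\Re A_n^{-1}$ is rank one (it inherits this from the single boundary term in the quadratic form) and isolates the appearance of $n$ in a single step (the continuity constraint forcing the $1/n$ in $\chi^{(1)}$). The paper's approach, on the other hand, gives the full formula for $A_n^{-1}$ as a by-product, which is not needed here but could be useful elsewhere.
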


\begin{proof}
    Consider the equation
\[
A_n(\alpha)\begin{pmatrix}
    u \\ v
\end{pmatrix} = \begin{pmatrix}
    f \\ g
\end{pmatrix},
\]
yielding $v = f$, $u'' = g$, and $\sum_{j=1}^nu_j'(0) = \alpha v(0)$. The solution is of the form
\[
u_j(x) = \int_0^x(x-t)g_j(t)\dd t - \frac{x}{\pi}\int_0^\pi(\pi-t)g_j(t) \dd t + \left(\alpha f(0) + \frac{1}{\pi}\int_\Gamma (\pi-t)g(t) \dd t\right)\frac{x-\pi}{n},
\]
where $\int_\Gamma (\pi-t)g(t)\dd t \coloneqq \sum_{k=1}^n \int_{0}^\pi(\pi-t)g_k(t) \dd t$.
Similarly, for the equation
\[
A^\ast_n(\alpha)\begin{pmatrix}
    u \\ v
\end{pmatrix} = \begin{pmatrix}
    f \\ g
\end{pmatrix},
\]
we obtain $v = -f$, $u'' = -g$, and $\sum_{j=1}^nu_j'(0) = \overline{\alpha}f(0)$.
This leads to
\[
u_j(x) = -\int_0^x(x-t)g_j(t)\dd t + \frac{x}{\pi}\int_0^\pi(\pi-t)g_j(t) \dd t + \left(\overline\alpha f(0) - \frac{1}{\pi}\int_\Gamma (\pi-t)g(t) \dd t\right)\frac{x-\pi}{n}.
\]
Overall, for any $\begin{pmatrix}
    f \\ g
\end{pmatrix} \in \mathcal H$, we have found
\[
\Re A^{-1}\begin{pmatrix}
    f \\ g
\end{pmatrix} = \frac{1}{2}\left(A^{-1} +(A^{-1})^\ast\right)\begin{pmatrix}
    f \\ g
\end{pmatrix} = \frac{\Re \alpha}{n}f(0)\begin{pmatrix}
    \rho \\ 0
\end{pmatrix}, \quad \rho_j(x) \coloneqq x- \pi.
\]

To find the trace, we choose an arbitrary orthonormal basis of $\mathcal H$ that contains the normalized spanning vector of $\Ran \Re A^{-1}$ of the form $\psi_0 \coloneqq 1/\sqrt{n\pi}(\rho, 0)$.
Then we compute as follows
\[
    \Tr\Re A_n^{-1}(\alpha) = \frac{1}{n\pi}\inner{\begin{pmatrix}
        \rho \\ 0
    \end{pmatrix}}{\Re A_n^{-1}(\alpha) \begin{pmatrix}
        \rho \\ 0
    \end{pmatrix}} = -\frac{\Re \alpha}{n^2}\norm{\begin{pmatrix}
        \rho \\ 0
    \end{pmatrix}}^2_\mathcal H = -\frac{\Re \alpha}{n}\pi.
    \qedhere
\]
\end{proof}

Let us now calculate the series
\[
\sum_{\lambda\in\sigma(A)}\Re \frac{1}{\lambda} = \sum_{k\in\mathbb Z}\Re\frac{1}{\lambda_{2,k}}
\]
using \eqref{GraphEigenvalues} for $\alpha \neq \pm n$. Computing the real part, we have
\[
\frac{1}{\lambda_{2,k}} = - 2\pi \frac{\ln\left|\frac{\alpha + n}{\alpha - n}\right| - \mathrm{i}(\theta + 2\pi k)}{\ln^2\left|\frac{\alpha + n}{\alpha - n}\right| + (\theta + 2\pi k)^2} \implies \Re \frac{1}{\lambda_{2,k}} = -\frac{\ln\left|\zeta_2\right|}{2\pi}\frac{1}{\frac{\ln^2\left|\zeta_2\right|}{4\pi^2} + \left(k + \frac{\theta}{2\pi}\right)^2}.
\]
Invoking \eqref{TheSeries} with $\gamma \coloneqq \theta/2\pi$ and $\beta \coloneqq \ln\left|\zeta_2\right|/2\pi$, we obtain
\begin{align}
\sum_{k\in \mathbb Z}\Re \frac{1}{\lambda_{2, k}} &= -\frac{\pi}{2} \frac{\sinh(\ln|\zeta_2|)}{\cosh^2\left(\frac{\ln|\zeta_2|}{2}\right) - \cos^2(\theta/2)} = \pi \frac{1-|\zeta_2|^2}{|\zeta_2|^2 - 2 \Re \zeta_2 + 1} = \pi \Re \frac{\zeta_2 + 1}{1-\zeta_2} \nonumber\\
&= -\pi\frac{\Re\alpha}{n}.
\label{GraphSpectralTrace}
\end{align}


On the other hand, for $\alpha = \pm n$, it follows directly from \eqref{GraphPolynomial} and \eqref{GraphEigenvalues} that $A_n(\alpha)$ has purely imaginary spectrum. Therefore,
\begin{equation}
\sum_{\lambda \in \sigma(A_n(\pm n))} \Re \frac{1}{\lambda} = 0.
\label{GraphCriticalDamping}
\end{equation}

Finally, comparing results \eqref{GraphSpectralTrace} and \eqref{GraphCriticalDamping} with Proposition \ref{GraphTrace}, we conclude with the following generalization of Theorem \ref{RieszBasisCriterion} that clarifies the appearance of the peculiar constant $\alpha = \pm 2$ in the model of Bamberger, Rauch, and Taylor.

\begin{theorem}\label{T: GraphRieszCriterion}
    Let $\alpha \in \mathbb C$, $n \in \mathbb N$. Then the eigenvectors of $A_n(\alpha)$ form a Riesz basis in $\mathcal H$ if and only if $\alpha \neq \pm n$.
\end{theorem}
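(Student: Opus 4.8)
The plan is to deduce the theorem from the Riesz basis criterion of Theorem~\ref{RieszCriterion} by settling totality through the Livšic criterion (Theorem~\ref{Livšic}) applied to the compact inverse $T \coloneqq A_n^{-1}(\alpha)$, treating the Bessel property as granted (its verification being analogous to the interval case and, as noted, omitted). First I would record the structural facts already in hand: by Proposition~\ref{GraphSProposition} the index of each eigenvalue equals its root multiplicity, and from the factorisation of $\lambda S_n$ through the quadratic $P_{n,\alpha}$ in \eqref{GraphPolynomial} every root is simple, since the two families $\lambda_{1,k}$ and $\lambda_{2,k}$ of \eqref{GraphEigenvalues} never collide ($\zeta_2=1$ forces $n=0$, and at $\lambda=\mathrm ik$ one has $n\cosh(\lambda\pi)+\alpha\sinh(\lambda\pi)=n(-1)^k\neq0$). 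Hence all root vectors are genuine eigenvectors, and by the adjoint relation $A_n^\ast(\alpha)=-A_n(-\overline\alpha)$ the eigenvectors of $A_n^\ast$ supply the required biorthogonal family.

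It then remains only to compare $\Tr\bigl(\Re A_n^{-1}(\alpha)\bigr)$ with the spectral sum $\sum_{\lambda\in\sigma(A_n)}\Re(1/\lambda)$, and both ingredients are already computed. Proposition~\ref{GraphTrace} gives $\Tr\Re A_n^{-1}(\alpha)=-\pi\,\Re\alpha/n$; equation \eqref{GraphSpectralTrace} gives the same value $-\pi\,\Re\alpha/n$ for the spectral sum when $\alpha\neq\pm n$; and \eqref{GraphCriticalDamping} shows the spectrum is purely imaginary, so the sum is $0$, when $\alpha=\pm n$. Thus for $\alpha\neq\pm n$ the two quantities coincide, equality holds in Livšic, and the eigenvectors are total; running the identical argument for $A_n^\ast(\alpha)=-A_n(-\overline\alpha)$ (where $-\overline\alpha\neq\pm n$ as well) makes the biorthogonal family total, and Theorem~\ref{RieszCriterion} delivers the Riesz basis. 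For $\alpha=\pm n$ instead $\Tr\Re A_n^{-1}=\mp\pi\neq0=\sum\Re(1/\lambda)$, the inequality is strict, the eigenvectors fail to be total, and therefore cannot form a Riesz basis.

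The only point requiring care is invoking Livšic with the correct orientation, since it presupposes $\Re T$ sign-definite and trace-class. The computation behind Proposition~\ref{GraphTrace} exhibits $\Re A_n^{-1}(\alpha)$ as the rank-one map $\binom{f}{g}\mapsto \frac{\Re\alpha}{n}f(0)\binom{\rho}{0}$, which is automatically trace-class, with single nonzero eigenvalue $-\pi\,\Re\alpha/n$ fixing its sign: for $\Re\alpha>0$ it is dissipative and one uses Theorem~\ref{Livšic} as stated, while for $\Re\alpha<0$ it is accretive and one uses the reversed inequality. Hence $\alpha=n$ calls the dissipative version and $\alpha=-n$ the accretive one, but in either case the strict inequality forces non-totality; the borderline $\Re\alpha=0$ is excluded from $\pm n$ since $n\in\mathbb N$, and is anyway trivial as $\Re T=0$. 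I expect this bookkeeping of dissipative versus accretive signs, together with confirming that the roots stay simple at the degenerate values $\alpha=\pm n$, to be the only genuinely delicate part; everything else is direct assembly of the results above.
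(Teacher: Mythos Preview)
Your proposal is correct and follows essentially the same approach as the paper: reduce to Theorem~\ref{RieszCriterion}, take Bessel as given by analogy, and decide totality via Livšic by comparing Proposition~\ref{GraphTrace} with \eqref{GraphSpectralTrace} and \eqref{GraphCriticalDamping}. You are in fact slightly more careful than the paper in two places---you spell out why the two root families never coincide (hence all indices are one), and you track the dissipative versus accretive orientation of $\Re A_n^{-1}$ explicitly---but the structure and ingredients are identical.
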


\section*{Declaration}
The author was supported by the grant no.~26-21940S of the Czech Science Foundation.


%
\providecommand{\bysame}{\leavevmode\hbox to3em{\hrulefill}\thinspace}
\providecommand{\MR}{\relax\ifhmode\unskip\space\fi MR }
\providecommand{\MRhref}[2]{%
  \href{http://www.ams.org/mathscinet-getitem?mr=#1}{#2}
}
\providecommand{\href}[2]{#2}

\end{document}